\numberwithin{equation}{section}
\newtheorem{theorem}{Theorem}[section]
\newtheorem{corollary}[theorem]{Corollary}
\newtheorem{proposition}[theorem]{Proposition}
\newtheorem{definition}[theorem]{Definition}
\newtheorem{conjecture}[theorem]{Conjecture}
\DeclareMathOperator{\Alg}{Alg}
\DeclareMathOperator{\Ker}{Ker}
\DeclareMathOperator{\ord}{ord}
\DeclareMathOperator{\Spec}{Spec}
\DeclareMathOperator{\alg}{alg}
\DeclareMathOperator{\Mat}{Mat}
\DeclareMathOperator{\Vect}{Vect}
\DeclareMathOperator{\Hom}{Hom}
\DeclareMathOperator{\Tor}{Tor}
\DeclareMathOperator{\codim}{codim}
\DeclareMathOperator{\Aut}{Aut}
\newcommand{\field}[1] {\mathbb{#1}}
\newcommand{\Z}{\field{Z}}
\newcommand{\K}{\field{K}}
\def\e{\varepsilon}
\def\g{\gamma}
\def\l{\lambda}
\def\L{\Lambda}
\def\o{\omega}
\def\O{\Omega}
\def\s{\sigma}
\def\v{\varphi}
\newcommand{\mc}{\mathcal}
\newcommand{\mf}{\mathfrak}
\DeclareMathOperator{\sign}{sign}
\begin{document}
	\title{A bridge between Vector Bundle Theory and Nonlinear Spectral Theory}
	\author{Juli\'an L\'opez-G\'omez, Juan Carlos Sampedro} \thanks{The authors have been supported by the Research Grant PGC2018-097104-B-I00 of the Spanish Ministry of Science, Technology and Universities and by the Institute of Interdisciplinar Mathematics of Complutense University. The second author has been also supported by PhD Grant PRE2019\_1\_0220 of the Basque Country Government.}
\address{Institute of Interdisciplinary Mathematics \\
	Department of Mathematical Analysis and Applied Mathematics \\
	Complutense University of Madrid \\
	28040-Madrid \\
	Spain.}
\email{julian@mat.ucm.es, juancsam@ucm.es}

\keywords{Degree Theory, Orientation, Spectral Theory, Algebraic Multiplicity, Topological $K$-theory, Path Integration}
\subjclass[2020]{47H11, 58C40, 55R50}

\begin{abstract}
	This paper establishes some hidden connections
 between the theory of generalized algebraic multiplicities, the intersection index of algebraic varieties, and the notion of orientability of vector bundles. The novel approach adopted in it facilitates the definition of several invariants closely related to the first Stiefel--Whitney fundamental class through some path integration techniques.
\end{abstract}

\maketitle

\section{Introduction}

\noindent In 1988, the classical notions of algebraic multiplicity of eigenvalues of linear operators was generalized to an infinite dimensional Fredholm setting by Esquinas and L\'{o}pez-G\'{o}mez \cite{ELG} to characterize the nonlinear eigenvalues of the $\mc{C}^r$-curves of linear Fredholm operators in the context of local bifurcation theory (see also Esquinas \cite{Es} and L\'{o}pez-G\'{o}mez \cite{LG01}). More recently, in  \cite{JJ3}, by adopting a geometrical point of view, the authors  established  a  hidden connection between the generalized algebraic multiplicity of Esquinas and L\'{o}pez-G\'{o}mez \cite{ELG,Es,LG01} and the
concept of local intersection index of algebraic varieties, a central device in algebraic geometry.  Precisely, it was shown that the algebraic multiplicity of a curve of Fredholm operators of index zero $\mf{L}:[0,1]\to\Phi_{0}(U)$ equals the intersection index of the curve $\mf{L}([0,1])\subset \Phi_{0}(U)$ with the subspace of singular operators $\mathcal{S}(U)\subset \Phi_{0}(U)$. Throughout this paper, $\Phi_{0}(U)$ stands for the space of bounded linear operators $T: U\to U$ on a real Banach space $U$ that are Fredholm of index zero. Naturally, this bridge between nonlinear spectral theory and algebraic geometry provides the generalized algebraic multiplicity with a deep geometrical meaning.
\par 
The main goal of this paper is studying vector bundles via topological $K$-Theory focusing special attention into the obstruction described by the first Stiefel--Whitney characteristic class, $\omega_{1}$. In particular, it will be shown that this obstruction can be completely described from the notion of generalized algebraic multiplicity, and hence, through the concept of intersection index of algebraic varieties, as discussed by the authors in \cite{JJ3}. These connections are  established in this paper by combining the Atiyah--J\"{a}nich index map
\begin{equation}
\label{E0}
\mathfrak{Ind}\;:\; [X,\Phi_{0}(U)]\longrightarrow\tilde{K}\mathcal{O}(X),
\end{equation}
with some spectral techniques developed by the authors in a series of recent papers \cite{JJ,JJ2,JJ3} for
continuous maps $h:X\to \Phi_{0}(U)$. In \eqref{E0}, $[X,\Phi_{0}(U)]$ stands for the set of homotopy classes of continuous maps $X\to \Phi_{0}(U)$, and $\tilde{K}\mathcal{O}(X)$ denotes the real reduced $K$-group of the compact path connected topological space $X$; the $K$-group consists of the stable equivalence classes of real vector bundles with base space $X$. In particular, we will be able to characterize the orientability of a given vector bundle in terms of the algebraic multiplicity and the local intersection index of certain algebraic varieties.  
\par
The relationship between the obstruction associated to the first Stiefel--Whitney class and the concept of intersection index can be described as follows. Under the appropriate assumptions, the index map \eqref{E0} is an isomorphism and hence, each real vector bundle $E\to X$ corresponds to a single parametrized family of Fredholm operators $h:X\to\Phi_{0}(U)$. Roughly spoken, we will establish that, considering $\omega_{1}(E)$ as a map $\pi_{1}(X)\to \mathbb{Z}_{2}$, the value $\omega_{1}(E)(\gamma)$ equals the sign of the algebraic multiplicity of the closed curve $$h\circ\gamma: \mathbb{S}^{1}\to\Phi_{0}(U).$$ Equivalently, $\omega_{1}(E)(\gamma)$ is the sign of the intersection index between $[h\circ\gamma](\mathbb{S}^{1})\subset \Phi_{0}(U)$ and the subspace of singular operators $\mathcal{S}(U)\subset \Phi_{0}(U)$. Therefore, the information provided by the first Stiefel--Whitney class can be translated into the way that a geometrical object in  $\Phi_{0}(U)$, as a parameterized family of Fredholm operators, intersects with $\mathcal{S}(U)$.
\par 
At a further stage, thanks to the flexibility and versatility of our new approach, by using path integration techniques on Riemannian manifolds, we will be able to define a new topological invariant of stable equivalence classes of real vector bundles via the integration of loops of the base space $X$. Such an invariant has been named in this paper as the \emph{global torsion invariant}. Based on the spectral techniques discussed previously, we will find out  the value of this invariant in some particular examples, and will be able to characterize the orientability of some real bundles in terms of it.
\par
The organization of this paper is the following. Section 2 reviews briefly the concept of
generalized algebraic multiplicity, $\chi$, introduced by Esquinas and L\'opez-G\'omez in \cite{ELG,Es,LG01}. Section 3 sketches the relationship between $\chi$ and the intersection index of algebraic varieties, $i$. Section 4 introduces the central concept of parity, $\sigma$, discussed by Fitzpatrick, Pejsachowicz and Rabier \cite{FPRa} and discusses its relations with $\chi$. The parity $\sigma$, which can be determined from $\chi$, establishes a bridge between the algebraic information provided by $\chi$ and some pivotal topological aspects of $\Phi_0(U)$, as it is a central invariant to study the topology of $\Phi_{0}(U)$. Section 5 describes the relationship between the first Stiefel--Whitney characteristic class, $\omega_1$, and the algebraic multiplicity, or, equivalently, the intersection index. It studies also the several notions of orientability of vector bundles and parameterized families of Fredholm operators. Section 6 introduces the global torsion invariant, a topological invariant of stable equivalence classes of vector bundles defined via $\chi$ through some well known path integration techniques. Finally, Section 7 finds out the global torsion invariant of the circle and the $n$-dimensional torus.

\section{The generalized algebraic multiplicity}

\noindent In this section we revise in a self contained way some fundamental properties of nonlinear spectral theory and, in particular, some fundamentals on the generalized algebraic multiplicity, $\chi$, introduced by Esquinas and L\'{o}pez-G\'{o}mez \cite{ELG} in 1988, and later developed in \cite{Es} and
\cite{LG01}. Throughout it, $\mathbb{K}\in\{\mathbb{R},\mathbb{C}\}$ and $\Omega$ stands for a subdomain of $\mathbb{K}$, and, for any given finite dimensional curve $\mathfrak{L}\in\mathcal{C}(\Omega,\mathcal{L}(\mathbb{K}^{N}))$,
a point $\l\in\Omega$ is said to be a \textit{generalized eigenvalue} of $\mathfrak{L}$ if $\mathfrak{L}(\l)\notin GL(\mathbb{K}^{N})$, i.e., $\mathrm{det\,}\mf{L}(\l)=0$. Then, the
\textit{generalized spectrum} of $\mathfrak{L}\in\mathcal{C}(\Omega,\mathcal{L}(\mathbb{K}^{N}))$
is defined by
\begin{equation*}
\Sigma(\mathfrak{L}):=\{\lambda\in\Omega: \mathfrak{L}(\lambda)\notin GL(\mathbb{K}^{N})\}.
\end{equation*}
For analytic curves $\mathfrak{L}\in\mathcal{H}(\O,\mathcal{L}(\mathbb{K}^{N}))$, since $\mathrm{det\,}\mf{L}(\l)$ is analytic in $\l\in\O$, either $\Sigma(\mathfrak{L})=\Omega$,
or $\Sigma(\mathfrak{L})$ is discrete. Thus, $\Sigma(\mf{L})$ consists of isolated generalized
eigenvalues if $\mf{L}(\mu)\in GL(\mathbb{K}^N)$ for some $\mu\in\O$. In such case, the \textit{algebraic multiplicity} of the curve $\mathfrak{L}\in\mathcal{H}(\Omega,\mathcal{L}(\mathbb{K}^{N}))$ at $\lambda_{0}$ is defined through
\begin{equation}
\label{2.1}
\mathfrak{m}_{\alg}[\mathfrak{L},\lambda_{0}]:=\ord_{\lambda_{0}}\det\mathfrak{L}(\lambda).
\end{equation}
This concept extends the classical notion of algebraic multiplicity in linear algebra. Indeed,
if $\mathfrak{L}(\lambda)=\lambda I_{N}-T$ for some linear operator $T\in\mathcal{L}(\mathbb{K}^{N})$, then $\mathfrak{L}\in\mathcal{H}(\mathbb{K},\mathcal{L}(\mathbb{K}^{N}))$ and it is easily seen that $\mathfrak{m}_{\alg}[\mathfrak{L},\lambda_{0}]$ is well defined for all $\l_0\in\Sigma(\mf{L})$ and that it actually equals the classical notion of multiplicity in linear algebra
\begin{equation}
\label{ii.2}
\mathfrak{m}_{\alg}[\mathfrak{L},\lambda_{0}]=\ord_{\lambda_{0}}\det(\lambda I_{N}-T).
\end{equation}
Indeed, since $GL(\K^N)$ is open, for sufficiently large $\l$ we have that $I_N-\l^{-1} T\in GL(\K^N)$. Thus, $\l I_N-T\in GL(\K^N)$ and $\Sigma(\mf{L})$ is discrete.
\par
This concept admits a natural (non-trivial) extension to an infinite-dimensional setting. To formalize it, we need to introduce some of notation. In this paper, for any given $\mathbb{K}$-Banach space, say $U$, we denote by $\Phi_0(U)$  the set of linear Fredholm operators of index zero in $U$. Then, a \emph{Fredholm path,} or curve, is any map $\mathfrak{L}\in \mathcal{C}(\Omega,\Phi_{0}(U))$.  Naturally, for any given $\mathfrak{L}\in \mathcal{C}(\Omega,\Phi_{0}(U))$, it is said that $\lambda\in\Omega$ is a \emph{generalized eigenvalue} of $\mathfrak{L}$ if $\mathfrak{L}(\lambda)\notin GL(U)$, and the \emph{generalized spectrum} of $\mathfrak{L}$, $\Sigma(\mathfrak{L})$,  is defined through   	 
\begin{equation*}
\Sigma(\mathfrak{L}):=\{\lambda\in\Omega: \mathfrak{L}(\lambda)\notin GL(U)\}.
\end{equation*}
The following concept, going back to \cite{LG01}, plays a pivotal role in the sequel.

\begin{definition}
	\label{de2.1}
	Let $\mathfrak{L}\in \mathcal{C}(\Omega, \Phi_{0}(U))$ and $\kappa\in\mathbb{N}$. A generalized eigenvalue $\lambda_{0}\in\Sigma(\mathfrak{L})$ is said to be $\kappa$-algebraic if there exists $\varepsilon>0$ such that
	\begin{enumerate}
		\item[{\rm (a)}] $\mathfrak{L}(\lambda)\in GL(U)$ if $0<|\lambda-\lambda_0|<\varepsilon$;
		\item[{\rm (b)}] There exists $C>0$ such that
		\begin{equation}
		\label{2.3}
		\|\mathfrak{L}^{-1}(\lambda)\|<\frac{C}{|\lambda-\lambda_{0}|^{\kappa}}\quad\hbox{if}\;\;
		0<|\lambda-\lambda_0|<\varepsilon;
		\end{equation}
		\item[{\rm (c)}] $\kappa$ is the minimal integer for which \eqref{2.3} holds.
	\end{enumerate}
\end{definition}
Throughout this paper, the set of $\kappa$-algebraic eigenvalues of $\mathfrak{L}$ will be  denoted by $\Alg_\kappa(\mathfrak{L})$, and the set of \emph{algebraic eigenvalues} by
\[
\Alg(\mathfrak{L}):=\bigcup_{\kappa\in\mathbb{N}}\Alg_\kappa(\mathfrak{L}).
\]
As in the special case when $U=\K^N$, according to Theorems 4.4.1 and 4.4.4 of \cite{LG01}, when $\mathfrak{L}(\lambda)$ is analytic in $\Omega$, i.e., $\mathfrak{L}\in\mathcal{H}(\Omega, \Phi_{0}(U))$,  then, either $\Sigma(\mathfrak{L})=\Omega$,
or $\Sigma(\mathfrak{L})$ is discrete and $\Sigma(\mathfrak{L})\subset \Alg(\mathfrak{L})$.
Subsequently, we denote by $\mathcal{A}_{\lambda_{0}}(\Omega,\Phi_{0}(U))$ the set  of curves $\mathfrak{L}\in\mathcal{C}^{r}(\Omega,\Phi_{0}(U))$ such that $\lambda_{0}\in\Alg_{\kappa}(\mathfrak{L})$ with $1\leq \kappa \leq r$ for some $r\in\mathbb{N}$.
Next, we will construct an infinite dimensional analogue of the classical algebraic multiplicity $\mathfrak{m}_{\alg}[\mathfrak{L},\l_{0}]$ for the class  $\mathcal{A}_{\lambda_{0}}(\Omega,\Phi_{0}(U))$. Essentially, this can be accomplished by
introducing an infinite-dimensional local concept of determinant in $\Phi_0(U)$, though  an infinite dimensional analogue of the classical notion of determinant is not available.
\par
Pick $T\in \Phi_0(U)$, and let  $P, Q\in \mathcal{L}(U)$, be projections onto $\Ker[T]$ and $R[T]$, respectively. Then, the pair $\mc{P}\equiv(P,Q)$ is refereed to as a pair of $T$-projections, and the following topological direct sum decompositions hold
\begin{equation}
\label{ii.4}
U =(I_{U}-P)(U)\oplus \Ker[T],\qquad U =R[T]\oplus(I_{U}-Q)(U).
\end{equation}
Moreover, setting
\begin{align*}
R[I_U-P]=(I_{U}-P)(U)\equiv \Ker[T]^{\perp}, \quad
R[I_U-Q] =(I_{U}-Q)(U)\equiv R[T]^{\perp},
\end{align*}
it follows from Fitzpatrick and Pejsachowicz \cite[p. 286]{FP1} that every $L\in \Phi_{0}(U)$ can be expressed as a block operator matrix
\begin{equation}
\label{ii.5}
L=\left(\begin{array}{cc} L_{11} & L_{12} \\[1ex] L_{21} & L_{22} \end{array}\right),
\end{equation}
where
\begin{equation*}
\begin{array}{ll}
L_{11}:=QL(I_{U}-P), & \quad L_{12}:=QLP, \\ L_{21}:=(I_{U}-Q)L(I_{U}-P), & \quad L_{22}:=(I_{U}-Q)LP.
\end{array}
\end{equation*}
In particular, since $TP=0$ and $(I_U-Q)T=0$, the operator $T$ can be expressed as
\begin{equation*}
T=\left(\begin{array}{cc}
T_{11} & 0 \\[1ex]
0 & 0
\end{array}\right)
\end{equation*}
with $T_{11}\in GL(\Ker[T]^{\perp}, R[T])$. Since $GL(\Ker[T]^{\perp},R[T])$ is open in $\mathcal{L}(\Ker[T]^{\perp},R[T])$ and $\Phi_{0}(U)$ is open in $\mathcal{L}(U)$,  there exists $\varepsilon>0$ such that, whenever $L\in \mathcal{L}(U)$ satisfies  $$\|T-L\|<\varepsilon,$$ then
$L\in\Phi_{0}(U)$, and it can be expressed as \eqref{ii.5} with  $L_{11}\in GL(\Ker[T]^{\perp},R[T])$. In this context, the \textit{Schur operator} of $T$ associated to the projection pair $\mc{P}\equiv(P,Q)$  can be defined through
\begin{equation*}
 \mathscr{S}_{T,\mc{P}}:  B_\varepsilon(T)\subset\Phi_{0}(U) \longrightarrow \mathcal{L}(\Ker[T],R[T]^{\perp}), \quad L \mapsto L_{22}-L_{21}L_{11}^{-1}L_{12}
\end{equation*}
where $B_\varepsilon(T)$ denotes the open ball of radius $\varepsilon$ centered at $T\in \Phi_0(U)$ in $\mathcal{L}(U)$. This operator, introduced by the authors in \cite{JJ3}, extends the  classical concept of \textit{Schur complement} of a matrix in the Euclidean space. Precisely, given any block matrix
\begin{equation}
\label{ii.6}
M=\left(\begin{array}{cc} A & B \\[1ex] C & D \end{array}\right),
\end{equation}
with $A\in GL(\mathbb{K}^{n})$, $B\in\Mat_{n\times m} (\mathbb{K})$, $C\in\Mat_{m\times n}(\mathbb{K})$ and $D\in \Mat_{m}(\mathbb{K})$, the \textit{Schur complement} of $D$ in $M$  is the matrix $M/A\in\Mat_{m}(\mathbb{K})$ defined by
\begin{equation*}
M/ A:=D-CA^{-1}B.
\end{equation*}
By  a lemma of Banachiewicz \cite[p. 50]{B}, the Schur complement satisfies the identity
\begin{equation*}
\det(M)=\det(A)\cdot\det(M/ A)
\end{equation*}
for every block matrix \eqref{ii.6}. Based on this feature, the authors introduced in \cite[Def. 3.1]{JJ3} the following local notion of determinant: Given $T\in\Phi_{0}(U)$ and any pair of $T$-projections $\mc{P}\equiv(P,Q)$, for sufficiently small $\varepsilon>0$, a local determinant functional can be defined through
\begin{equation*}
\mathfrak{D}_{T,\mc{P}}: B_{\varepsilon}(T)\subset \Phi_{0}(U)  \longrightarrow  \mathbb{K}, \quad
   L \mapsto \det(\mathscr{S}_{T,\mc{P}}(L)).
\end{equation*}
This notion of local determinant indeed behaves as a (local) determinant on $B_\varepsilon(T)$ since, for sufficiently small $\varepsilon>0$ and every $L\in B_\varepsilon(T)$,
$$
   L\in GL(U)\quad\hbox{if, and only if,} \quad \mathfrak{D}_{T,\mc{P}}(L)\neq0
$$
(see \cite[Th. 3.2]{JJ3} for a proof of this feature). Thanks to this (local) notion of determinant, we can introduce a generalized concept of algebraic multiplicity that can also be expressed in the vein of \eqref{2.1} even in an infinite-dimensional setting.

\begin{definition}
	\label{de2.2}
	Assume  $\mathfrak{L}\in\mathcal{A}_{\lambda_{0}}(\Omega,\Phi_{0}(U))$, i.e.,
	$\mathfrak{L}\in\mathcal{C}^{r}(\Omega,\Phi_{0}(U))$ with  $\lambda_{0}\in\Alg_{\kappa}(\mathfrak{L})$ for some integer $r\geq 1$ and $1\leq \kappa \leq r$. Then,
	the algebraic multiplicity of $\mathfrak{L}$ at $\lambda_{0}$ can be defined through
	\begin{equation}
	\label{ii.7}
	\mathfrak{m}_{\alg}[\mathfrak{L},\lambda_{0}]:= \ord_{\lambda_{0}}
	\mathfrak{D}_{\mathfrak{L}(\lambda_{0}),\mc{P}}(\mathfrak{L}(\lambda))
	\end{equation}
	for every pair $\mc{P}\equiv(P,Q)$ of $\mathfrak{L}(\lambda_{0})$-projections.
\end{definition}

According to \cite[Th. 1.1]{JJ3}, the formula \eqref{ii.7} can be equivalently expressed as
\begin{equation}
\label{ii.8}
\mathfrak{m}_{\alg}[\mathfrak{L},\lambda_{0}]=\ord_{\lambda_{0}}\det[P\mathfrak{L}^{-1}(\lambda)(I_{U}-Q)]^{-1}.
\end{equation}
By \cite[Th. 1.2]{JJ3}, this concept of multiplicity is consistent, in the sense that it does not depend on the particular choice of the pair $\mc{P}\equiv(P,Q)$ of $\mathfrak{L}(\lambda_{0})$-projections.
\par
Another approach to $\mathfrak{m}_{\alg}$,  useful for practical purposes, can be conducted through the theory of Esquinas and L\'{o}pez-G\'{o}mez
\cite{ELG},  where the following pivotal concept, generalizing the transversality condition of
Crandall and Rabinowitz \cite{CR},  was introduced. Subsequently, we will set
$\mathfrak{L}_{j}:=\frac{1}{j!}\mathfrak{L}^{(j)}(\lambda_{0})$, $1\leq j\leq r$, should these derivatives exist.

\begin{definition}
	\label{de2.3}
	Let $\mathfrak{L}\in \mathcal{C}^{r}(\O,\Phi_{0}(U))$ and $1\leq \kappa \leq r$. Then, a given $\lambda_{0}\in \Sigma(\mathfrak{L})$ is said to be a $\kappa$-transversal eigenvalue of $\mathfrak{L}$ if
	\begin{equation*}
	\bigoplus_{j=1}^{\kappa}\mathfrak{L}_{j}\left(\bigcap_{i=0}^{j-1}\Ker(\mathfrak{L}_{i})\right)
	\oplus R(\mathfrak{L}_{0})=U\;\; \hbox{with}\;\; \mathfrak{L}_{\kappa}\left(\bigcap_{i=0}^{\kappa-1}\Ker(\mathfrak{L}_{i})\right)\neq \{0\}.
	\end{equation*}
\end{definition}

For these eigenvalues, the following generalized concept of algebraic multiplicity was introduced by
Esquinas and L\'{o}pez-G\'{o}mez \cite{ELG},
\begin{equation}
\label{ii.9}
\chi[\mathfrak{L}, \lambda_{0}] :=\sum_{j=1}^{\kappa}j\cdot \dim \mathfrak{L}_{j}\left(\bigcap_{i=0}^{j-1}\Ker(\mathfrak{L}_{i})\right).
\end{equation}
In particular, when $\Ker[\mf{L}_0]=\mathrm{span}[\v_0]$ for some $\v_0\in U$ such that $\mf{L}_1\v_0\notin R[\mf{L}_0]$, then
\begin{equation}
\label{ii.10}
\mf{L}_1(\Ker[\mf{L}_0])\oplus R[\mf{L}_0]=U
\end{equation}
and hence, $\l_0$ is a 1-transversal eigenvalue of $\mf{L}(\l)$ with $\chi[\mf{L},\l_0]=1$. The transversality condition \eqref{ii.10} goes back to Crandall and Rabinowitz \cite{CR}. More generally, under condition \eqref{ii.10},
$$
\chi[\mf{L},\l_0]=\dim \Ker[\mf{L}_0].
$$
According to Theorems 4.3.2 and 5.3.3 of \cite{LG01}, for every $\mathfrak{L}\in \mathcal{C}^{r}(\O, \Phi_{0}(U))$, $\kappa\in\{1,2,...,r\}$ and $\lambda_{0}\in \Alg_{\kappa}(\mathfrak{L})$, there exists a polynomial $\Phi: \O\to \mathcal{L}(U)$ with $\Phi(\lambda_{0})=I_{U}$ such that $\lambda_{0}$ is a $\kappa$-transversal eigenvalue of the path
\begin{equation}
\label{ii.11}
\mathfrak{L}^{\Phi}:=\mathfrak{L}\circ\Phi\in \mathcal{C}^{r}(\O, \Phi_{0}(U)),
\end{equation}
and $\chi[\mathfrak{L}^{\Phi},\lambda_{0}]$ is independent of the curve of \emph{trasversalizing local isomorphisms} $\Phi$ chosen to transversalize $\mathfrak{L}$ at $\lambda_0$ through \eqref{ii.11}. Therefore, the following concept of multiplicity
is consistent
\begin{equation}
\label{ii.12}
   \chi[\mf{L},\l_0]:= \chi[\mathfrak{L}^{\Phi},\lambda_{0}].
\end{equation}
By a recent result of the authors, \cite[Th. 1.2]{JJ3},
\begin{equation}
\label{ii.13}
   \mathfrak{m}_{\alg}[\mf{L},\l_0]=\chi[\mf{L},\l_0]
\end{equation}
for all $\mathfrak{L}\in \mathcal{C}^{r}(\O, \Phi_{0}(U))$, $\kappa\in\{1,2,...,r\}$ and $\lambda_{0}\in \Alg_{\kappa}(\mathfrak{L})$. Any of these concepts of algebraic multiplicities, with its several equivalent formulations, can be easily extended by setting
$\chi[\mathfrak{L},\lambda_0] =0$ if $\lambda_0\notin\Sigma(\mathfrak{L})$ and
$\chi[\mathfrak{L},\lambda_0] =+\infty$ if $\lambda_0\in \Sigma(\mathfrak{L})
\setminus \Alg(\mathfrak{L})$ and $r=+\infty$. Thus, $\chi[\mathfrak{L},\lambda]$, or, equivalently,
$\mathfrak{m}_{\alg}[\mf{L},\l]$,  is well defined for all  $\lambda\in \O$ of any smooth path $\mathfrak{L}\in \mathcal{C}^{\infty}(\O,\Phi_{0}(U))$. In particular, for any analytical curve  $\mathfrak{L}\in\mathcal{H}(\O,\Phi_{0}(U))$.
The next uniqueness result, going back to Mora-Corral \cite{MC}, axiomatizes these concepts of algebraic multiplicity. Some refinements of them were delivered in \cite[Ch. 6]{LGMC}.

\begin{theorem}
	\label{th24}
	Let $U$ be a $\mathbb{K}$-Banach space. For every $\lambda_{0}\in\mathbb{K}$ and any open neighborhood $\Omega_{\lambda_{0}}\subset\mathbb{K}$ of $\lambda_{0}$, the algebraic multiplicity $\chi$ is the unique map 		
	\begin{equation*}
		\chi[\cdot, \lambda_{0}]: \mathcal{C}^{\infty}(\Omega_{\lambda_{0}}, \Phi_{0}(U))\longrightarrow [0,\infty]
	\end{equation*}
	such that
	\begin{enumerate}
		\item[{\rm (PF)}] For every pair $\mathfrak{L}, \mathfrak{M} \in \mathcal{C}^{\infty}(\Omega_{\lambda_{0}}, \Phi_{0}(U))$,
		\begin{equation*}
			\chi[\mathfrak{L}\circ\mathfrak{M}, \lambda_{0}]=\chi[\mathfrak{L},\lambda_{0}]+\chi[\mathfrak{M},\lambda_{0}].
		\end{equation*}
		\item[{\rm (NP)}] There exists a rank one projection $\Pi \in \mathcal{L}(U)$ such that
		\begin{equation*}
			\chi[(\lambda-\lambda_{0})\Pi +I_{U}-\Pi,\lambda_{0}]=1.
		\end{equation*}
	\end{enumerate}
\end{theorem}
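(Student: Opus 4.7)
The plan is to split the proof into existence and uniqueness. Existence amounts to verifying that the multiplicity $\chi$ recalled above satisfies (PF) and (NP). Property (PF) is a direct consequence of the multiplicativity of the local determinant functional $\mathfrak{D}_{T,\mathcal{P}}$ on $B_\varepsilon(T)$ combined with \eqref{ii.7}, since the order of vanishing at $\lambda_0$ of a product of smooth scalar functions adds. For (NP), a direct computation on the path $\mathfrak{N}_\Pi(\lambda):=(\lambda-\lambda_0)\Pi+I_U-\Pi$ for any rank-one projection $\Pi$ yields $\ker\mathfrak{N}_\Pi(\lambda_0)=R[\Pi]$, $R[\mathfrak{N}_\Pi(\lambda_0)]=\ker\Pi$ and $\mathfrak{N}_\Pi'(\lambda_0)R[\Pi]=R[\Pi]$, so $\lambda_0$ is $1$-transversal and \eqref{ii.9} gives $\chi[\mathfrak{N}_\Pi,\lambda_0]=1$.

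For uniqueness, let $\tilde{\chi}$ be any functional satisfying (PF) and (NP). Setting $\mathfrak{L}=\mathfrak{M}=I_U$ in (PF) gives $\tilde{\chi}[I_U,\lambda_0]=0$. For any $\Psi\in\mathcal{C}^\infty(\Omega_{\lambda_0},\Phi_0(U))$ with $\Psi(\lambda_0)\in GL(U)$, invertibility persists on a neighborhood of $\lambda_0$ and a smooth cutoff produces $\Psi^\sharp$ agreeing with the pointwise inverse near $\lambda_0$; under the locality tacit in the axiomatization, (PF) applied to $\Psi\circ\Psi^\sharp$ together with non-negativity forces $\tilde{\chi}[\Psi,\lambda_0]=0$. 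Consequently $\tilde{\chi}$ is invariant under left or right composition with any path invertible at $\lambda_0$, and in particular under $GL(U)$-conjugation.

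The heart of the argument is a reduction to a Weierstrass-type normal form. By the transversalization recalled in \eqref{ii.11}--\eqref{ii.12}, an algebraic path can be written as $\mathfrak{L}=\mathfrak{L}^\Phi\circ\Phi^{-1}$ with $\Phi(\lambda_0)=I_U$ and $\mathfrak{L}^\Phi$ being $\kappa$-transversal, so $\tilde{\chi}[\mathfrak{L},\lambda_0]=\tilde{\chi}[\mathfrak{L}^\Phi,\lambda_0]$. Using the direct sum decomposition of Definition \ref{de2.3} and the projections $\Pi_j$ onto $\mathfrak{L}^\Phi_j\bigl(\bigcap_{i<j}\ker\mathfrak{L}^\Phi_i\bigr)$, one factorizes $\mathfrak{L}^\Phi=\Psi_1\cdot\mathfrak{N}\cdot\Psi_2$ with $\Psi_k(\lambda_0)\in GL(U)$ and
\[
\mathfrak{N}(\lambda)=\Bigl(I_U-\sum_{j=1}^{\kappa}\Pi_j\Bigr)+\sum_{j=1}^{\kappa}(\lambda-\lambda_0)^{j}\Pi_j.
\]
After a further $GL(U)$-conjugation the $\Pi_j$ can be taken mutually orthogonal, so $\mathfrak{N}$ becomes the commutative product $\prod_{j=1}^{\kappa}\bigl((\lambda-\lambda_0)\Pi_j+I_U-\Pi_j\bigr)^{\circ j}$. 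Splitting each $\Pi_j$ into a sum of $\dim R[\Pi_j]$ mutually orthogonal rank-one projections and applying (PF) reduces the computation of $\tilde{\chi}[\mathfrak{N},\lambda_0]$ to counting rank-one factors, each contributing $1$ by conjugation-invariance and (NP). This yields $\tilde{\chi}[\mathfrak{L},\lambda_0]=\sum_{j=1}^{\kappa}j\cdot \dim R[\Pi_j]=\chi[\mathfrak{L},\lambda_0]$. The remaining case $\lambda_0\in\Sigma(\mathfrak{L})\setminus\Alg(\mathfrak{L})$ is handled by splitting off arbitrarily high-order vanishing factors to force $\tilde{\chi}[\mathfrak{L},\lambda_0]=+\infty$.

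The hardest parts will be (i) the Weierstrass-style factorization, whose commutativity rests on arranging the $\Pi_j$ to be mutually orthogonal --- requiring a preliminary $GL(U)$-conjugation and the absorption of the off-diagonal lower-order terms of $\mathfrak{L}^\Phi$ into the invertible factors $\Psi_1,\Psi_2$ --- and (ii) the upgrade of (NP) from the single rank-one projection supplied by the axiom to every rank-one projection of $U$. The latter reduces to showing that any two closed hyperplanes of a Banach space can be carried onto each other by some $T\in GL(U)$, a fact one obtains via a rank-two perturbation of the identity.
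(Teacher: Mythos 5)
The paper does not prove Theorem~\ref{th24}: it is imported verbatim from Mora--Corral's uniqueness theorem \cite{MC}, with the authors explicitly deferring the argument to \cite[Ch.~6]{LGMC}. So there is no in-paper proof to measure you against; what can be said is how your reconstruction compares with the known argument, and whether it is sound on its own terms.

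Your overall architecture (verify (PF) and (NP); peel off invertible factors; reduce a transversalized path to a Smith/Weierstrass-type block normal form $\mathfrak{N}(\lambda)=\bigl(I_U-\sum_j\Pi_j\bigr)+\sum_j(\lambda-\lambda_0)^j\Pi_j$; split into rank-one factors; apply (NP) once per factor) is the right one and is essentially what Mora--Corral and \cite[Ch.~6]{LGMC} do. However, there are two genuine gaps. First, the step ``$\tilde\chi[\Psi,\lambda_0]=0$ whenever $\Psi(\lambda_0)\in GL(U)$'' is \emph{not} a consequence of (PF)+(NP) as stated, and the phrase ``under the locality tacit in the axiomatization'' is precisely where the argument breaks. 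Your cutoff $\Psi^\sharp$ only satisfies $\Psi\circ\Psi^\sharp\equiv I_U$ \emph{near} $\lambda_0$, so $\tilde\chi[\Psi\circ\Psi^\sharp,\lambda_0]=0$ only if $\tilde\chi$ already depends on the germ of the path at $\lambda_0$ --- which is what you are trying to prove. Indeed, over a fixed $\Omega_{\lambda_0}$ the map $\tilde\chi[\cdot,\lambda_0]:=\chi[\cdot,\lambda_0]+\chi[\cdot,\lambda_1]$ for a fixed $\lambda_1\in\Omega_{\lambda_0}\setminus\{\lambda_0\}$ satisfies (PF) and (NP) (the normalizing path is invertible at $\lambda_1$) yet differs from $\chi$; so locality must either be added as a hypothesis, supplied by a germ formulation (which is how the original references set it up), or derived by a substantive argument you have not given. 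Second, the Weierstrass-style factorization $\mathfrak{L}^\Phi=\Psi_1\cdot\mathfrak{N}\cdot\Psi_2$ with invertible $\Psi_i$ and commuting, mutually orthogonal $\Pi_j$ is the technical heart of the proof in \cite[Ch.~6]{LGMC}; here it is only asserted, and the absorption of the cross-terms of $\mathfrak{L}^\Phi$ into $\Psi_1,\Psi_2$ is nontrivial. A smaller point: on the existence side, (PF) for $\chi$ is not ``a direct consequence of the multiplicativity of $\mathfrak{D}_{T,\mathcal{P}}$,'' because the base operators $T$ and the projection pairs $\mathcal{P}$ appearing for $\mathfrak{L}$, $\mathfrak{M}$ and $\mathfrak{L}\circ\mathfrak{M}$ are all different; (PF) is itself a theorem (it is proved in \cite{LG01} and \cite{JJ3}) and should be quoted as such rather than rederived in a line.
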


The axiom (PF) is the  \emph{product formula} and (NP) is a \emph{normalization property}
for establishing the uniqueness of $\chi$. From these two axioms one can derive the remaining properties of  $\chi$; among them, that it equals the classical algebraic multiplicity when
\begin{equation*}
\mathfrak{L}(\lambda)= \lambda I_{U} - K
\end{equation*}
for some compact operator $K$. Indeed, for every $\mathfrak{L}\in \mathcal{C}^{\infty}(\Omega_{\lambda_{0}},\Phi_{0}(U))$, the following properties are satisfied:
\begin{itemize}
	\item $\chi[\mathfrak{L},\lambda_{0}]\in\mathbb{N}\uplus\{+\infty\}$;
	\item $\chi[\mathfrak{L},\lambda_{0}]=0$ if and only if $\mathfrak{L}(\lambda_0)
	\in GL(U)$;
	\item $\chi[\mathfrak{L},\lambda_{0}]<\infty$ if and only if $\lambda_0 \in\Alg(\mathfrak{L})$.
	\item If $U =\mathbb{K}^N$, then, in any basis,
	\begin{equation*}
	\label{1.1.18}
	\chi[\mathfrak{L},\lambda_{0}]= \mathrm{ord}_{\lambda_{0}}\det \mathfrak{L}(\lambda).
	\end{equation*}
	\item Let $K:U\to U$ be a compact operator, then, for every $\lambda_0\in \Spec(K)$,
	\begin{equation*}
	\label{1.1.90}
	\chi [\lambda I_U-K,\lambda_{0}]=\mathfrak{m}[K,\lambda_0],
	\end{equation*}
	where $\mathfrak{m}[K,\mu]$ is the classical algebraic multiplicity of $\mu$, i.e.,
	\[
	\mathfrak{m}[K,\mu]= \mathrm{dim\,}\mathrm{Ker}[(\mu I_{U}-K)^{\nu(\mu)}],
	\]
	where $\nu(\mu)$ is the \emph{algebraic ascent} of $\mu$, i.e. the minimal integer, $\nu\geq 1$, such that
	\[
	\mathrm{Ker}[(\mu I_{U}-K)^{\nu}]=\mathrm{Ker}[(\mu I_{U}-K)^{\nu+1}].
	\]
\end{itemize}

\section{The algebraic multiplicity and the intersection index}

\noindent As already noted, any generalized eigenvalue $\lambda_{0}\in\Sigma(\mathfrak{L})$ of the curve $\mathfrak{L}\in\mathcal{C}(\Omega,\Phi_{0}(U))$ lies in the intersection of $\mathfrak{L}$ with the space of singular operators $\mathcal{S}(U):=\Phi_{0}(U)\backslash GL(U)$. This geometrical feature leads the authors, \cite{JJ3}, to establish that the algebraic multiplicity actually measures how the curve $\mathfrak{L}(\lambda)$ intersects geometrically with $\mathcal{S}(U)$. Thus, relating the algebraic multiplicity with the \textit{local intersection index}, a pivotal geometrical device for measuring the nature of the intersections of algebraic varieties.
\par
Given a family of algebraic varieties, $\mathscr{V}$, an intersection theory over $\mathscr{V}$ consists  in giving a pairing
\begin{equation}
\label{1.10}
\bullet\,:\; A^{r}(X)\times A^{s}(X)\to A^{r+s}(X)
\end{equation}
satisfying a series of axioms (see, e.g.,  Hartshorne \cite[pp. 426--427]{H}, Eisenbud and Harris \cite[Ch. 1 and 2]{E}, and Fulton \cite[Ch. 7 and 8]{F}) for every $r,s \in \mathbb{N}$ and $X\in\mathscr{V}$, where $A^{r}(X)$ stands for the group of cycles of codimension $r$ on $X$ modulo rational equivalence. The graded group
$$
A(X)\equiv \bigoplus_{r\in\mathbb{N}}A^{r}(X)
$$
is referred to as the \textit{Chow group} after Chow \cite{Ch}. The pairing \eqref{1.10} gives to  $A(X)$ the structure of a graded ring, the \textit{Chow ring} of $X$. Giving an intersection theory to an algebraic variety $X$  consists in giving the structure of the Chow ring $A(X)$; the axioms of the pairing $\bullet$ trying to mimic in $X$ the celebrated \textit{Bezout theorem} \cite[\S 2.1.1]{E}. This explains why one of these axioms establishes that if $X_1$ and $X_2$ are subvarieties of $X$
with  \emph{proper intersection}, in the sense that any irreducible component of $X_1\cap X_2$ has codimension
$$
  \codim X_1 + \codim X_2,
$$
then
$$
[X_1]\bullet [X_2]=\sum_{j} i(X_1,X_2;C_{j})[C_{j}]
$$
where the sum runs over the set of all irreducible components, $C_{j}$, of $X_1\cap X_2$, and  the integer $i(X_1,X_2;C_{j})$ stands for the \textit{local intersection index} of $X_1$ and $X_2$ along $C_{j}$. By a result of Serre \cite[Ch. V, \S C.1]{S}, for any given pair $X_1, X_2$ of subvarieties of a smooth variety $X$ with proper intersection and every irreducible component, $C$,  of $X_1\cap X_2$, the local intersection index of $X_1$ and $X_2$ along $C$ is given through
$$
i(X_1,X_2;C)=\sum_{i=0}^{\dim X}(-1)^{i}\ell_{\mathcal{O}_{c,X}} \Tor^{\mathcal{O}_{c,X}}_{i}(\mathcal{O}_{c,X}/\mathfrak{P}_{X_1},\mathcal{O}_{c,X}/\mathfrak{P}_{X_2})
$$
where $\mathcal{O}_{c,X}$ is the local ring of $c\in C$ in $X$, and $\mathfrak{P}_{X_1}$ and $\mathfrak{P}_{X_2}$ are the ideals  of $X_1$ and $X_2$, respectively,  in the ring $\mathcal{O}_{c,X}$. In the Serre formula, we denote by $\ell_{R}(M)$ the length of the module $M$ over the ring $R$, and by $\Tor_{i}^{R}$  the $i$-th Tor $R$-module.
\par
Subsequently, $\mathcal{D}_{N}: \mathcal{L}(\mathbb{C}^{N})\to\mathbb{C}$ stands for the determinant map defined by $\mathcal{D}_{N}(T):=\det T$ for all $T\in \mathcal{L}(\mathbb{C}^{N})$. The next result of \cite{JJ3}, shows the exact relation between the local intersection index and the algebraic multiplicity. It establishes that  $\chi[\mathfrak{L},\l_{0}]$ is the local intersection index of the curve $\mathfrak{L}(\l)$ and the variety $\mathcal{D}_{N}^{-1}(0)$.

\begin{theorem}
	\label{th31}
	Let $T\in\mathcal{L}(\mathbb{C}^{N})$, $\lambda_{0}\in\sigma(T)$, and $\mathfrak{L}(\lambda)=\lambda I_{N}-T$, $\lambda\in \mathbb{C}$. Then,  	
	\begin{equation}
	\label{1.12}
	\mf{m}_{\alg}[\mathfrak{L},\lambda_{0}]=
	i(\mathcal{D}_{N}^{-1}(0),\mathfrak{L}(\mathbb{C});\mathfrak{L}(\lambda_{0})).
	\end{equation}
\end{theorem}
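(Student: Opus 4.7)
The plan is to invoke Serre's formula, as recalled in the excerpt, for the ambient smooth variety $X := \mathcal{L}(\mathbb{C}^{N})\simeq \mathbb{C}^{N^{2}}$, the hypersurface $X_{1} := \mathcal{D}_{N}^{-1}(0)$, and the affine line $X_{2} := \mathfrak{L}(\mathbb{C})$, and then to show that the resulting Tor-sum collapses to the order of vanishing at $\lambda_{0}$ of the one-variable polynomial $\det\mathfrak{L}(\lambda)=\det(\lambda I_{N} - T)$. Since by \eqref{ii.2} we know that $\mathfrak{m}_{\alg}[\mathfrak{L},\lambda_{0}]=\ord_{\lambda_{0}}\det(\lambda I_{N} - T)$, this delivers \eqref{1.12}.

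First, I would verify that the intersection is proper at the point $c:=\mathfrak{L}(\lambda_{0})$. The hypersurface $X_{1}$ has codimension $1$, whereas the affine line $X_{2}$ has codimension $N^{2}-1$; since $\mathfrak{L}(\mathbb{C})\not\subset\mathcal{D}_{N}^{-1}(0)$, for $\lambda I_{N}-T\in GL(\mathbb{C}^{N})$ for $|\lambda|$ sufficiently large, the isolated component of $X_{1}\cap X_{2}$ through $c$ has the correct codimension $N^{2} = \codim X_{1} + \codim X_{2}$, so Serre's formula applies to the pair $(X_{1},X_{2})$ at $c$.

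Second, I would argue the vanishing of the higher Tors. As $\mathcal{D}_{N}$ is an irreducible polynomial on $\mathcal{L}(\mathbb{C}^{N})$, the ideal $\mathfrak{P}_{X_{1}}$ in the local ring $\mathcal{O}_{c,X}$ is the principal ideal generated by the germ of $\mathcal{D}_{N}$ at $c$, so
\begin{equation*}
0 \longrightarrow \mathcal{O}_{c,X} \xrightarrow{\cdot\,\mathcal{D}_{N}} \mathcal{O}_{c,X} \longrightarrow \mathcal{O}_{c,X}/\mathfrak{P}_{X_{1}}\longrightarrow 0
\end{equation*}
is a free resolution of $\mathcal{O}_{c,X}/\mathfrak{P}_{X_{1}}$. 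Because the image of $\mathcal{D}_{N}$ in $\mathcal{O}_{c,X}/\mathfrak{P}_{X_{2}}$ is a non-zerodivisor (again from $\mathfrak{L}(\mathbb{C})\not\subset X_{1}$), tensoring this resolution with $\mathcal{O}_{c,X}/\mathfrak{P}_{X_{2}}$ yields $\Tor^{\mathcal{O}_{c,X}}_{i}(\mathcal{O}_{c,X}/\mathfrak{P}_{X_{1}},\mathcal{O}_{c,X}/\mathfrak{P}_{X_{2}})=0$ for all $i\geq 1$. Hence Serre's formula reduces to
\begin{equation*}
i(X_{1},X_{2};c)=\ell_{\mathcal{O}_{c,X}}\!\left(\mathcal{O}_{c,X}/(\mathfrak{P}_{X_{1}}+\mathfrak{P}_{X_{2}})\right).
\end{equation*}

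Third, I would identify this length. The parametrization $\lambda\mapsto\mathfrak{L}(\lambda)$ of the line $X_{2}$ induces an isomorphism of local rings $\mathcal{O}_{c,X}/\mathfrak{P}_{X_{2}}\simeq\mathbb{C}[\lambda]_{(\lambda-\lambda_{0})}$ under which the image of $\mathfrak{P}_{X_{1}}$ becomes the principal ideal generated by $\mathcal{D}_{N}\circ\mathfrak{L}(\lambda)=\det(\lambda I_{N}-T)$. Therefore,
\begin{equation*}
\mathcal{O}_{c,X}/(\mathfrak{P}_{X_{1}}+\mathfrak{P}_{X_{2}})\simeq \mathbb{C}[\lambda]_{(\lambda-\lambda_{0})}\bigl/\bigl(\det(\lambda I_{N}-T)\bigr),
\end{equation*}
and, since $\mathbb{C}[\lambda]_{(\lambda-\lambda_{0})}$ is a discrete valuation ring, the length of this last quotient is exactly $\ord_{\lambda_{0}}\det(\lambda I_{N}-T)$, which is $\mathfrak{m}_{\alg}[\mathfrak{L},\lambda_{0}]$ by \eqref{ii.2}. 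The main obstacle will be the careful justification of the higher Tor vanishing, i.e.\ that $\mathcal{D}_{N}$ acts as a regular element on $\mathcal{O}_{c,X}/\mathfrak{P}_{X_{2}}$; once this is settled, the remaining computations reduce to routine book-keeping inside the discrete valuation ring $\mathbb{C}[\lambda]_{(\lambda-\lambda_{0})}$.
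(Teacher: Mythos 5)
Your proof is correct. The paper does not actually reproduce the proof of Theorem \ref{th31} here (it is cited from \cite{JJ3}), but the preamble to the theorem recalls precisely Serre's Tor-formula for the local intersection index, which is exactly the engine of your argument; your route is almost certainly the intended one. The three steps are all sound: properness holds because $\codim X_1 + \codim X_2 = 1 + (N^2-1) = N^2 = \dim X$ and the isolated intersection point has the right codimension; the higher Tor groups vanish because $X_1$ is a hypersurface in the regular ambient $\mathcal{O}_{c,X}$, giving the length-one free resolution $0\to\mathcal{O}_{c,X}\xrightarrow{\cdot\mathcal{D}_N}\mathcal{O}_{c,X}\to\mathcal{O}_{c,X}/\mathfrak{P}_{X_1}\to 0$, and $\mathcal{D}_N$ restricts to the nonzero polynomial $\det(\lambda I_N - T)$ on the line, hence is a non-zerodivisor on the integral domain $\mathcal{O}_{c,X}/\mathfrak{P}_{X_2}\simeq\mathbb{C}[\lambda]_{(\lambda-\lambda_0)}$; and the resulting length in that discrete valuation ring is $\ord_{\lambda_0}\det(\lambda I_N - T)$, which by \eqref{ii.2} is $\mathfrak{m}_{\alg}[\mathfrak{L},\lambda_0]$.
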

\noindent Under the assumptions of Theorem \ref{th31},
\begin{equation*}
\sum_{\lambda\in\sigma(T)}\mf{m}_{\alg}[\mathfrak{L},\lambda]=N.
\end{equation*}
Thus, by \eqref{1.12},
\begin{equation*}
\sum_{\lambda\in\sigma(T)}i(\mathcal{D}_{N}^{-1}(0),\mathfrak{L}(\mathbb{C});\mathfrak{L}(\lambda_{0}))
=\deg(\mathfrak{L})\deg(\mathcal{D}_{N}),
\end{equation*}
which provides us with an analogue of the Bezout theorem in this context.
\par
The relationship between the algebraic multiplicity and the intersection index can be extended to an infinite dimensional context through the concept of \emph{global linealization}. Given a complex Banach space $U$ and a domain $\O\subset \mathbb{C}$, for any given $\mathfrak{L}\in \mathcal{ A}_{\lambda_{0}}(\Omega,\Phi_{0}(U))$,  we will denote by
$$
\mathscr{L}_{\l_{0}}= \mathscr{L}_{\l_{0}}(\mathfrak{L}) \in \mathcal{L}(\mathbb{C}^M)
$$
the \emph{global linealization}, as discussed by Lemma 10.1.1
of L\'{o}pez-G\'{o}mez and Mora-Corral \cite{LGMC}, of the \emph{local Smith form} of the \textit{Schur complement} of $\mathfrak{L}$ at $\l_{0}$, whose existence follows from Theorem 7.4.1 of   \cite{LGMC}. To simplify the notation as much as possible, we will just refer to $\mathscr{L}_{\l_{0}}$ by the \textit{Schur reduction} of the curve $\mathfrak{L}$ at $\l_{0}$. The following result is a substantial generalization of Theorem \ref{th31}.

\begin{theorem}
\label{th32} For every $\mathfrak{L}\in \mathcal{A}_{\l_{0}}(\Omega,\Phi_{0}(U))$,
	\begin{equation}
	\label{1.13}
	\chi[\mathfrak{L},\lambda_{0}]=i(\mathcal{D}_{M}^{-1}(0),\lambda I_{M} - \mathscr{L}_{\l_{0}};\lambda_{0} I_{M}-\mathscr{L}_{\l_{0}}).
	\end{equation}
\end{theorem}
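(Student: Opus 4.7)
The plan is to reduce the infinite-dimensional statement to the finite-dimensional linear case already handled by Theorem \ref{th31}, using the Schur complement together with the global linearization of the local Smith form, and to track the order of vanishing of an appropriate determinant through each reduction.

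First, I would fix a pair $\mc{P}\equiv(P,Q)$ of $\mf{L}(\l_0)$-projections and form the Schur operator $\mathscr{S}(\l):=\mathscr{S}_{\mf{L}(\l_0),\mc{P}}(\mf{L}(\l))$, which takes values in $\mathcal{L}(\Ker[\mf{L}(\l_0)], R[\mf{L}(\l_0)]^{\perp})$. Because $\mf{L}(\l_0)\in\Phi_{0}(U)$, both of these spaces are finite-dimensional of the same dimension, so $\mathscr{S}$ is in fact a smooth finite-dimensional matrix-valued curve near $\l_0$. Combining \eqref{ii.13} with Definition \ref{de2.2} and the very definition of $\mathfrak{D}_{T,\mc{P}}$ yields
\begin{equation*}
\chi[\mf{L},\l_0]\;=\;\mf{m}_{\alg}[\mf{L},\l_0]\;=\;\ord_{\l_0}\det\mathscr{S}(\l),
\end{equation*}
so the task is already reduced to computing the order of vanishing of a determinant of a finite-dimensional matrix pencil.

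Next, I would invoke the local Smith form of Theorem 7.4.1 of \cite{LGMC} applied to $\mathscr{S}(\l)$, which produces unimodular factors $E(\l),F(\l)$ near $\l_0$ with $E(\l_0),F(\l_0)$ invertible and a diagonal matrix $D(\l)=\mathrm{diag}((\l-\l_0)^{n_1},\dots,(\l-\l_0)^{n_k})$, $n_1\leq\cdots\leq n_k$, such that $\mathscr{S}(\l)=E(\l)D(\l)F(\l)$ on a punctured neighbourhood of $\l_0$. In particular $\ord_{\l_0}\det\mathscr{S}(\l)=n_1+\cdots+n_k$. Applying the global linearization of Lemma 10.1.1 of \cite{LGMC} then delivers a matrix $\mathscr{L}_{\l_0}\in\mathcal{L}(\mathbb{C}^{M})$, with $M=n_1+\cdots+n_k$, assembled as a block-Jordan matrix whose blocks have sizes $n_1,\dots,n_k$ at the eigenvalue $\l_0$. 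Hence
\begin{equation*}
\ord_{\l_0}\det(\l I_{M}-\mathscr{L}_{\l_0})\;=\;n_1+\cdots+n_k\;=\;\ord_{\l_0}\det\mathscr{S}(\l),
\end{equation*}
the equality of orders being immediate from linear algebra.

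Finally, applying Theorem \ref{th31} with $N=M$ and $T=\mathscr{L}_{\l_0}$ gives
\begin{equation*}
\mf{m}_{\alg}[\l I_{M}-\mathscr{L}_{\l_0},\l_0]\;=\;i(\mathcal{D}_{M}^{-1}(0),\l I_{M}-\mathscr{L}_{\l_0};\l_0 I_{M}-\mathscr{L}_{\l_0}),
\end{equation*}
and combining this with the previous chain of equalities delivers \eqref{1.13}. The delicate point that I expect to be the main obstacle is verifying that the global linearization of Lemma 10.1.1 of \cite{LGMC} indeed yields a matrix whose spectral multiplicity at $\l_0$ agrees with $\ord_{\l_0}\det\mathscr{S}(\l)$ under the merely $\mc{C}^r$ hypothesis inherent to $\mathcal{A}_{\l_0}(\Omega,\Phi_{0}(U))$, and independently of the pair $\mc{P}$. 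Independence of $\mc{P}$ is already granted by Theorem 1.2 of \cite{JJ3} at the level of $\chi$, while the characteristic polynomial identity is intrinsic to the block-Jordan construction, so both matters reduce to careful bookkeeping with the local Smith factorization rather than to any new conceptual ingredient.
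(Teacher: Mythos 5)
The paper states Theorem \ref{th32} without an explicit proof; the argument is built into the very definition of the Schur reduction $\mathscr{L}_{\l_0}$ as the global linearization of the local Smith form of the Schur complement. Your chain of equalities, from $\chi[\mathfrak{L},\l_0]$ via \eqref{ii.13} and the Schur determinant $\ord_{\l_0}\det\mathscr{S}(\l)$ to the spectral multiplicity of $\mathscr{L}_{\l_0}$ at $\l_0$, closed by applying Theorem \ref{th31} with $T=\mathscr{L}_{\l_0}$, is precisely the intended reduction and is sound.
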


Although these bi-associations were stated in the field $\mathbb{C}$, by restricting ourselves
to analytical curves in $\mathbb{R}$ they are still valid in the real setting. Indeed, for every  $T\in\Phi_{0}(U)$, let us denote  by $T_{\mathbb{C}}\in \Phi_{0}(U_{\mathbb{C}})$ its complexification, i.e., its unique linear extension to $U_{\mathbb{C}}\to U_{\mathbb{C}}$, where
$$
 U_{\mathbb{C}}:=U \otimes_{\mathbb{R}} \mathbb{C}.
$$
Similarly, given any real Banach space $U$ and an analytic curve $\mathfrak{L}\in\mathcal{H}(I,\Phi_{0}(U))$, $\mathfrak{L}\equiv\mathfrak{L}(\l)$, $\l\in I$, where $I\subset\mathbb{R}$ is an interval, we can consider its complexification $\mathfrak{L}_{\mathbb{C}}\in\mathcal{H}(I_{\mathbb{C}},\Phi_{0}(U_{\mathbb{C}}))$ to be the analytic continuation of $[\mathfrak{L}(\cdot)]_{\mathbb{C}}$ to $I_{\mathbb{C}}:=I+i \mathbb{R}$.
Then, it is straightforward to verify that $\mathfrak{D}_{\mathfrak{L}_{\mathbb{C}}(\l_{0}),\mc{P}_{\mathbb{C}}}$, where $\mc{P}_{\mathbb{C}}\equiv(P_{\mathbb{C}},Q_{\mathbb{C}})$, is the complexification of $\mathfrak{D}_{\mathfrak{L}(\l_{0}),\mc{P}}$ for every $\l_{0}\in I$ and each pair $\mc{P}\equiv(P,Q)$ of $\mathfrak{L}(\lambda_{0})$-projections. Thus,
\begin{equation*}
\ord_{\lambda_{0}}\mathfrak{D}_{\mathfrak{L}(\l_{0}),\mc{P}}=\ord_{\lambda_{0}}\mathfrak{D}_{\mathfrak{L}_{\mathbb{C}}(\l_{0}),\mc{P}_{\mathbb{C}}}
\end{equation*}
and hence
\begin{equation*}
\chi[\mathfrak{L},\l_{0}]=\chi[\mathfrak{L}_{\mathbb{C}},\l_{0}]=i(\mathcal{D}_{M}^{-1}(0),\lambda I_{M} - \mathscr{L}_{\l_{0}}(\mathfrak{L}_{\mathbb{C}});\lambda_{0} I_{M}-\mathscr{L}_{\l_{0}}(\mathfrak{L}_{\mathbb{C}})),
\end{equation*}
where $\mathscr{L}_{\l_{0}}(\mathfrak{L}_{\mathbb{C}})\in\mathcal{L}(\mathbb{C}^{M})$ is the Schur reduction of the complexification $\mathfrak{L}_{\mathbb{C}}$ of $\mathfrak{L}$ at $\l_{0}$.

\section{The relationship between the parity and the algebraic multiplicity}

\noindent In this section, for a real Banach space $U$, we study the topology of the space of Fredholm operators of index zero,  $\Phi_{0}(U)$,  via the \textit{parity}, a topological invariant of paths introduced by Fitzpatrick, Pejsachowicz and Rabier \cite{FPRa}. This notion was introduced to overcome the lack of orientation in the infinite dimensional setting, allowing in this way, the definition of a topological degree for Fredholm operators of index zero (see \cite{FPRb}). Moreover, this section shows that the notions of parity and generalized algebraic multiplicity are strongly related.
\par
We begin by recalling some important features concerning the structure of $\Phi_{0}(U)$, which is an open subset (in general,  not linear) of $\mathcal{L}(U)$. Subsequently, for every $n\in\mathbb{N}$, we denote by $\mathcal{S}_{n}(U)$ the set of \textit{singular operators of order} $n$
\begin{equation*}
\mathcal{S}_{n}(U):=\{L\in\Phi_{0}(U):\;\; \dim \Ker[L] =n\}.
\end{equation*}
Then, the class of \textit{singular operators} is given through
\begin{equation*}
\mathcal{S}(U):=\Phi_{0}(U)\backslash GL(U)=\biguplus_{n\in\mathbb{N}}\mathcal{S}_{n}(U).
\end{equation*}
According to Fitzpatrick and Pejsachowicz \cite{FP1}, for every $n\in\mathbb{N}$, $\mathcal{S}_{n}(U)$ is a Banach submanifold of $\Phi_{0}(U)$ of codimension $n^{2}$. This allows us to view  $\mathcal{S}(U)$ as a stratified analytic set of $\Phi_{0}(U)$.
By Theorem 2 of Kuiper \cite{K}, the space of isomorphisms,
$GL({H})$, of any real or complex separable infinite dimensional Hilbert space, ${H}$,  is contractible and hence path-connected. Thus, in general, it is not possible to introduce an orientation for operators in $GL(U)$, since $GL(U)$ can be path-connected; by an orientation we mean the choice of a path connected component of the space $GL(U)$ when it contains al least two. This fact reveals a fundamental difference between finite and infinite dimensional spaces, as, for every $N\in\mathbb{N}$, it is folklore that $ GL(\mathbb{R}^{N})$ consists of two path-connected components, $GL^\pm(\mathbb{R}^N)$. A key technical tool to overcome this difficulty is provided by the concept of \emph{parity} introduced by  Fitzpatrick and Pejsachowicz \cite{FP2}.  The parity is a generalized local detector of the change of orientability of a given \emph{admissible path}. Although one cannot expect to get a global orientation in $\Phi_{0}(U)$ when $GL(U)$ is path-connected, one can study the orientability as a local phenomenon through the concept of \textit{parity}.
\par
Subsequently, a Fredholm path $\mathfrak{L}\in \mathcal{C}([a,b],\Phi_{0}(U))$ is said to be \emph{admissible} if $\mathfrak{L}(a), \mathfrak{L}(b)\in GL(U)$, and $\mathscr{C}([a,b],\Phi_{0}(U))$
stands for the set of those admissible paths. Moreover, for every $r\in\mathbb{N}\uplus\{+\infty\}$,
we set
\[
\mathscr{C}^r([a,b],\Phi_{0}(U)) := \mathcal{C}^{r}([a,b],\Phi_{0}(U))\cap \mathscr{C}([a,b],\Phi_{0}(U)),
\]
\[
\mathscr{H}([a,b],\Phi_{0}(U)) := \mathcal{H}([a,b],\Phi_{0}(U))\cap \mathscr{C}([a,b],\Phi_{0}(U)).
\]
The most geometrical way to introduce the notion of parity consists in defining it for $\mathscr{C}$-transversal  paths, and then for general admissible curves  through the density of $\mathscr{C}$-transversal paths in $\mathscr{C}([a,b],\Phi_{0}(U))$, established by Fitzpatrick and Pejsachowicz in \cite{FP1}. A  continuous Fredholm path, $\mathfrak{L}\in \mathcal{C}([a,b],\Phi_{0}(U))$, is said to be  \emph{$\mathscr{C}$-transversal} if
\begin{enumerate}
	\item[{\rm i)}] $\mathfrak{L}\in \mathscr{C}^{1}([a,b],\Phi_{0}(U))$;
	\item[{\rm ii)}] $\mathfrak{L}([a,b])\cap \mathcal{S}(U)\subset \mathcal{S}_{1}(U)$ and it is finite;
	\item[{\rm iii)}] $\mathfrak{L}$ is transversal to $\mathcal{S}_{1}(U)$ at each point of $\mathfrak{L}([a,b])\cap \mathcal{S}(U)$.
\end{enumerate}
When $\mathfrak{L}$ is $\mathscr{C}$-transversal, then, the (total) \emph{parity} of $\mathfrak{L}$ in $[a,b]$ is defined by
\begin{equation*}
\sigma(\mathfrak{L},[a,b]):=(-1)^{\kappa},
\end{equation*}
where $\kappa\in\mathbb{N}$ equals the cardinal of $\mathfrak{L}([a,b])\cap \mathcal{S}(U)$.
Thus, the parity of a $\mathscr{C}$-transversal path, $\mathfrak{L}(\lambda)$, is the number of times, mod 2, that $\mathfrak{L}(\lambda)$ intersects transversally the stratified analytic set $\mathcal{S}(U)$.
\par
The fact that the set of $\mathscr{C}$-transversal paths is dense in the set of admissible paths, $\mathscr{C}([a,b],\Phi_{0}(U))$, allows us
to define the parity for a general $\mathfrak{L}\in \mathscr{C}([a,b],\Phi_{0}(U))$ through
\begin{equation*}
\sigma(\mathfrak{L},[a,b]):=\sigma(\tilde{\mathfrak{L}},[a,b]),
\end{equation*}
where $\tilde{\mathfrak{L}}$ is any $\mathscr{C}$-transversal curve satisfying $\|\mathfrak{L}-\tilde{\mathfrak{L}}\|_{\infty}<\varepsilon$ for sufficiently small $\varepsilon>0$.
\par
The parity can be also introduced analytically as follows. Given $\mathfrak{L}\in\mathcal{C}([a,b],\Phi_{0}(U))$, a \textit{parametrix} of $\mathfrak{L}$ is a path $\mathfrak{P}\in \mathcal{C}([a,b],GL(U))$ satisfying
$$
   \mathfrak{P}(\l)\circ \mathfrak{L}(\l)-I_{U}\in\mathcal{K}(U) \quad \hbox{for all} \;\; \l\in[a,b],
$$
where $\mathcal{K}(U)$ denotes the ideal of compact linear operators of $U$. By Theorem 2.1 of Fitzpatrick and Pejsachowicz \cite{FP2}, every Fredholm path admits a parametrix, $\mathfrak{P}$. Thus, the parity of the path $\mathfrak{L}\in\mathscr{C}([a,b],\Phi_{0}(U))$ can be defined as
\begin{equation*}
  \sigma(\mathfrak{L},[a,b])=\deg(\mathfrak{P}(a)\circ\mathfrak{L}(a))
   \deg(\mathfrak{P}(b)\circ\mathfrak{L}(b))
\end{equation*}
where $\mathfrak{P}\in\mathcal{C}([a,b],GL(U))$ is any parametrix of $\mathfrak{L}$ and $\deg\equiv \deg_{LS}$ denotes the Leray--Schauder degree. These geometrical and analytical definitions of the parity coincide. However, the geometrical one shows that, actually, the parity detects any change of orientability since each transversal crossing with $\mathcal{S}(U)$ entails a \textit{change of side} with respect to $\mathcal{S}(U)$ when $GL(U)$ consists of two path-connected components. So, the curve crosses $\mc{S}(U)$ from one of these two components to the other.
\par
Throughout the rest of this  paper, an homotopy $H\in\mathcal{C}([0,1]\times[a,b],\Phi_{0}(U))$  is said to be  \emph{admissible} if $H([0,1]\times\{a,b\})\subset GL(U)$, and two given paths, $\mathfrak{L}_{1}$ and $\mathfrak{L}_{2}$, are said to be $\mathcal{A}$-\emph{homotopic} if they are homotopic through some  admissible homotopy. A fundamental property of the parity, already established by Fitzpatrick and Pejsachowiz \cite{FP2}, is  its invariance under admissible homotopies.
\par
The next result, which is Theorem 4.5 of \cite{JJ},
shows how the parity of any admissible Fredholm path
$\mathfrak{L}\in\mathscr{C}([a,b], \Phi_{0}(U))$ can be computed through the generalized algebraic multiplicity $\chi$.

\begin{theorem}
\label{th4.1}
Any continuous admissible path $\mathfrak{L}\in\mathscr{C}([a,b],\Phi_{0}(U))$ is $\mathcal{A}$-homotopic to an admissible analytic Fredholm curve $\mathfrak{L}_{\omega}\in\mathscr{H}([a,b],\Phi_{0}(U))$. Moreover,
\begin{equation*}
	\sigma(\mathfrak{L},[a,b])=(-1)^{\sum_{i=1}^{n}\chi[\mathfrak{L}_{\omega},\lambda_{i}]},
\end{equation*}
	where
\begin{equation*}
	\Sigma(\mathfrak{L}_{\omega})=\{\lambda_{1},\lambda_{2},...,\lambda_{n}\}.
\end{equation*}
\end{theorem}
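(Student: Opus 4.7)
My strategy is to replace $\mathfrak{L}$ by an analytic approximation, partition $[a,b]$ around the finitely many generalized eigenvalues of the approximation, and reduce each local factor of the parity to a finite-dimensional determinant via the Schur machinery of Section~3. For the first claim, I would invoke the sup-norm density of polynomial (hence analytic) $\mathcal{L}(U)$-valued maps in $\mathcal{C}([a,b],\mathcal{L}(U))$, for instance through Bernstein polynomials with Banach-valued coefficients. Since $\mathfrak{L}([a,b])$ is a compact subset of the open set $\Phi_{0}(U)$ with $\mathfrak{L}(a),\mathfrak{L}(b)\in GL(U)$, for any sufficiently close analytic approximation $\mathfrak{L}_{\omega}$ the straight-line homotopy
\begin{equation*}
H(s,\l):=(1-s)\mathfrak{L}(\l)+s\mathfrak{L}_{\omega}(\l),\qquad (s,\l)\in[0,1]\times[a,b],
\end{equation*}
stays in $\Phi_{0}(U)$ with $H(s,a),H(s,b)\in GL(U)$; hence $\mathfrak{L}_{\omega}\in\mathscr{H}([a,b],\Phi_{0}(U))$, $H$ is admissible, and the homotopy invariance of $\sigma$ yields $\sigma(\mathfrak{L},[a,b])=\sigma(\mathfrak{L}_{\omega},[a,b])$.

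Since $\mathfrak{L}_{\omega}$ is analytic with invertible endpoints, the discreteness results recalled in Section~2 force $\Sigma(\mathfrak{L}_{\omega})=\{\l_{1},\ldots,\l_{n}\}\subset(a,b)$ to be a finite set of algebraic eigenvalues. Choosing partition points $a=c_{0}<c_{1}<\cdots<c_{n}=b$ with exactly one $\l_{i}$ in $(c_{i-1},c_{i})$ and $\mathfrak{L}_{\omega}(c_{j})\in GL(U)$ for every $j$, the multiplicativity of the parity under concatenation yields
\begin{equation*}
\sigma(\mathfrak{L}_{\omega},[a,b])=\prod_{i=1}^{n}\sigma(\mathfrak{L}_{\omega},[c_{i-1},c_{i}]),
\end{equation*}
reducing the proof to the local identity $\sigma(\mathfrak{L}_{\omega},[c_{i-1},c_{i}])=(-1)^{\chi[\mathfrak{L}_{\omega},\l_{i}]}$ for each $i$.

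For this local step, fix $i$, take a pair $\mc{P}\equiv(P,Q)$ of $\mathfrak{L}_{\omega}(\l_{i})$-projections, and use the block $LDU$-decomposition underlying the Schur complement of Section~3 to produce, on a neighborhood of $\l_{i}$, an analytic factorization $\mathfrak{L}_{\omega}(\l)=\mathfrak{E}(\l)\circ\mathfrak{N}(\l)$ in which $\mathfrak{E}(\l)\in GL(U)$ and $\mathfrak{N}(\l)$ equals the identity outside a finite-dimensional invariant subspace of dimension $M$, on which it realizes $\l I_{M}-\mathscr{L}_{\l_{i}}$. After shrinking $[c_{i-1},c_{i}]$ into this neighborhood so that no other eigenvalue of $\mathscr{L}_{\l_{i}}$ lies in it, the product formula (PF) of Theorem~\ref{th24}, together with $\chi[\mathfrak{E},\l_{i}]=0$, gives $\chi[\mathfrak{L}_{\omega},\l_{i}]=\chi[\mathfrak{N},\l_{i}]=\ord_{\l_{i}}\det(\l I_{M}-\mathscr{L}_{\l_{i}})$; taking $\mathfrak{E}^{-1}$ as a parametrix factor reduces $\sigma(\mathfrak{L}_{\omega},[c_{i-1},c_{i}])$ to the parity of $\mathfrak{N}$, which by Leray--Schauder applied to the finite-rank identity perturbation $\mathfrak{N}$ evaluates to $\sign\bigl(\det(c_{i-1}I_{M}-\mathscr{L}_{\l_{i}})\det(c_{i}I_{M}-\mathscr{L}_{\l_{i}})\bigr)$. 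Factoring $\det(\l I_{M}-\mathscr{L}_{\l_{i}})=(\l-\l_{i})^{\chi_{i}}h(\l)$ with $h$ non-vanishing on $[c_{i-1},c_{i}]$ (hence $h(c_{i-1})h(c_{i})>0$) delivers the desired $(-1)^{\chi_{i}}$.

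The delicate point, as I see it, is the local factorization in the last step: guaranteeing that the Schur decomposition is analytic on the whole subinterval $[c_{i-1},c_{i}]$ (not merely at $\l_{i}$), and that the isomorphism factor $\mathfrak{E}$ absorbs cleanly into both $\chi$ via (PF) and $\sigma$ via the parametrix characterization. Once that reduction is secured, the remaining concatenation argument and finite-dimensional determinant computation are essentially routine.
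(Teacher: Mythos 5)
Your global architecture --- Bernstein-polynomial approximation, straight-line admissible homotopy inside the open set $\Phi_{0}(U)$, concatenation of parity over a partition isolating each eigenvalue, and a local reduction to a finite-dimensional determinant via a block Schur factorization --- is sound. However, the local step as written would not go through. You assert that $\mathfrak{N}(\l)$ ``equals the identity outside a finite-dimensional invariant subspace of dimension $M$, on which it realizes $\l I_{M}-\mathscr{L}_{\l_{i}}$.'' The $LDU$ block factorization instead produces a finite-rank factor that, on the $n$-dimensional subspace $\Ker[\mathfrak{L}_{\omega}(\l_{i})]$ (with $n=\dim\Ker[\mathfrak{L}_{\omega}(\l_{i})]$, not $M$, and $M\geq n$ in general), realizes the genuine Schur complement $\mathscr{S}_{\mathfrak{L}_{\omega}(\l_{i}),\mc{P}}(\mathfrak{L}_{\omega}(\l))$, \emph{not} $\l I_{M}-\mathscr{L}_{\l_{i}}$. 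The Schur reduction $\mathscr{L}_{\l_{i}}\in\mathcal{L}(\mathbb{C}^{M})$ of Section 3 is an abstract complex matrix produced by the local Smith form and global linearization of \cite{LGMC}; the curve $\l\mapsto\l I_{M}-\mathscr{L}_{\l_{i}}$ agrees with the Schur complement only in the order of vanishing of its determinant at $\l_{i}$, and --- crucially --- its determinant is complex-valued for real $\l$, so the sign comparison at $c_{i-1}$, $c_{i}$ that your last step relies on has no meaning.

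The repair is immediate: drop $\mathscr{L}_{\l_{i}}$ from the local step and work with the real-analytic scalar $\l\mapsto\det\mathscr{S}_{\mathfrak{L}_{\omega}(\l_{i}),\mc{P}}(\mathfrak{L}_{\omega}(\l))=\mathfrak{D}_{\mathfrak{L}_{\omega}(\l_{i}),\mc{P}}(\mathfrak{L}_{\omega}(\l))$. By Definition \ref{de2.2} together with \eqref{ii.13}, its order of vanishing at $\l_{i}$ is exactly $\chi[\mathfrak{L}_{\omega},\l_{i}]$, and the Leray--Schauder degree of $\mathfrak{N}(\l)$ equals $\sign\det\mathscr{S}_{\mathfrak{L}_{\omega}(\l_{i}),\mc{P}}(\mathfrak{L}_{\omega}(\l))$ up to a fixed sign coming from the identification isomorphism between $\Ker[T]$ and $R[T]^{\perp}$, a constant that squares to $+1$ in the two-endpoint product. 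The factorization $(\l-\l_{i})^{\chi_{i}}h(\l)$ with $h$ real-analytic and non-vanishing on the shrunken subinterval then delivers $\sigma(\mathfrak{L}_{\omega},[c_{i-1},c_{i}])=(-1)^{\chi_{i}}$ as you intend, and the rest of your concatenation argument stands. It is also worth noting that the paper imports this theorem from \cite{JJ}, which predates the Schur-complement machinery of \cite{JJ3}; the original argument is more likely organized around the transversalizing local isomorphisms of Theorems 4.3.2 and 5.3.3 of \cite{LG01} than around the block factorization, so, once corrected, your route is a genuinely different and arguably more uniform path to the same identity.
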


Subsequently, for every $\mathfrak{L}\in\mathcal{C}([a,b],\Phi_{0}(U))$ and any isolated eigenvalue $\lambda_{0}\in\Sigma(\mathfrak{L})$, we define the \textit{localized parity} of $\mathfrak{L}$ at $\lambda_{0}$ by
\begin{equation*}
\sigma(\mathfrak{L},\lambda_{0}):=\lim_{\eta\downarrow 0}\sigma(\mathfrak{L},[\lambda_{0}-\eta,\lambda_{0}+\eta]).
\end{equation*}
As a consequence of Theorem \ref{th4.1}, the next result holds (see \cite[Cor. 4.6]{JJ}).

\begin{corollary}
	\label{cr4.2}
Assume  $\mathfrak{L}\in\mathcal{A}_{\lambda_{0}}([a,b],\Phi_{0}(U))$, i.e.,
$\mathfrak{L}\in\mathcal{C}^{r}([a,b],\Phi_{0}(U))$ with  $\lambda_{0}\in\Alg_{\kappa}(\mathfrak{L})$ for some integer $r\geq 1$ and $1\leq \kappa \leq r$. Then,
\begin{equation}
\label{iv.1}
	\sigma(\mathfrak{L},\lambda_{0})=(-1)^{\chi[\mathfrak{L},\lambda_{0}]}.
\end{equation}
\end{corollary}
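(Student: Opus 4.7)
The plan is to localize Theorem~\ref{th4.1} to a small interval around $\lambda_{0}$, and then identify the localized parity with $(-1)^{\chi[\mathfrak{L},\lambda_{0}]}$ via a conservation-of-multiplicity argument.

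First, by condition (a) of Definition~\ref{de2.1}, there exists $\varepsilon>0$ such that $\mathfrak{L}(\lambda)\in GL(U)$ whenever $0<|\lambda-\lambda_{0}|<\varepsilon$. Hence, for every $0<\eta<\varepsilon$, the restriction $\mathfrak{L}|_{[\lambda_{0}-\eta,\lambda_{0}+\eta]}$ lies in $\mathscr{C}([\lambda_{0}-\eta,\lambda_{0}+\eta],\Phi_{0}(U))$ and $\lambda_{0}$ is its unique generalized eigenvalue. Applying Theorem~\ref{th4.1} on this interval yields an $\mathcal{A}$-homotopic admissible analytic approximant $\mathfrak{L}_{\omega}^{\eta}\in\mathscr{H}([\lambda_{0}-\eta,\lambda_{0}+\eta],\Phi_{0}(U))$ whose generalized spectrum $\Sigma(\mathfrak{L}_{\omega}^{\eta})=\{\mu_{1},\dots,\mu_{n}\}$ verifies
\begin{equation*}
\sigma(\mathfrak{L},[\lambda_{0}-\eta,\lambda_{0}+\eta])=(-1)^{\sum_{i=1}^{n}\chi[\mathfrak{L}_{\omega}^{\eta},\mu_{i}]}.
\end{equation*}

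The remaining task is to show that, for $\eta$ small enough and for an approximant chosen sufficiently close to $\mathfrak{L}$,
\begin{equation*}
\sum_{i=1}^{n}\chi[\mathfrak{L}_{\omega}^{\eta},\mu_{i}]=\chi[\mathfrak{L},\lambda_{0}].
\end{equation*}
By Theorem~\ref{th32} and the real-analytic discussion closing Section~3, each summand on the left equals the order of vanishing at $\mu_{i}$ of the characteristic polynomial $\det(\lambda I_{M}-\mathscr{L}_{\mu_{i}}(\mathfrak{L}_{\omega,\mathbb{C}}^{\eta}))$, while the right-hand side equals the order at $\lambda_{0}$ of $\det(\lambda I_{M}-\mathscr{L}_{\lambda_{0}}(\mathfrak{L}_{\mathbb{C}}))$. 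Since the Schur reduction depends continuously on the Fredholm path, these polynomials are uniformly close on a small complex disk centered at $\lambda_{0}$; an application of Hurwitz's theorem then preserves the total count of zeros inside the disk, yielding the claimed identity. Letting $\eta\downarrow 0$ delivers \eqref{iv.1}.

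The principal obstacle is this multiplicity-conservation step: one must simultaneously arrange that $\mathfrak{L}_{\omega}^{\eta}$ is uniformly close to $\mathfrak{L}$ on $[\lambda_{0}-\eta,\lambda_{0}+\eta]$ (implicit in the density of analytic paths used in Theorem~\ref{th4.1}) and that the induced Schur reductions, and hence their characteristic determinants, are close on a fixed complex disk around $\lambda_{0}$. Once both are secured, the finite-dimensional Hurwitz argument routinely completes the passage from Theorem~\ref{th4.1} to the local identity \eqref{iv.1}.
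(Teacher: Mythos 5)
Your first two steps — localizing to a small interval around $\lambda_{0}$ and invoking Theorem~\ref{th4.1} — are sound and consistent with the route the paper takes (it cites \cite[Cor.~4.6]{JJ} as a consequence of Theorem~\ref{th4.1}). The gap lies in the multiplicity-conservation step.

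Three problems. First, the characteristic polynomials $\det\bigl(\lambda I_{M}-\mathscr{L}_{\mu_{i}}(\mathfrak{L}^{\eta}_{\omega,\mathbb{C}})\bigr)$ are \emph{distinct local objects}: each $\mathscr{L}_{\mu_{i}}$ arises from a local Smith form reduction based at $\mu_{i}$, and the matrices can differ in size from one $\mu_{i}$ to another and from $\mathscr{L}_{\lambda_{0}}(\mathfrak{L}_{\mathbb{C}})$. There is no single pair of ``uniformly close polynomials on a small complex disk centered at $\lambda_{0}$'' to which Hurwitz could apply. Second, continuous dependence of the Schur reduction on the Fredholm path is not established and is in fact delicate — the local Smith form construction (Theorem 7.4.1 of \cite{LGMC}) involves germ-level analytic normalizations and is not a continuous operation on curves. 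Third, $\mathfrak{L}$ is only $\mathcal{C}^{r}$; its complexification is not available, and Hurwitz's theorem counts zeros of \emph{holomorphic} functions. Invoking it for a $\mathcal{C}^{r}$ real curve, or for a comparison between a $\mathcal{C}^{r}$ curve and a nearby analytic one, is not a routine step. Moreover, you are attempting to prove an exact identity $\sum_{i}\chi[\mathfrak{L}^{\eta}_{\omega},\mu_{i}]=\chi[\mathfrak{L},\lambda_{0}]$, which is strictly more than what the corollary requires and is not guaranteed by any perturbation argument alone (multiplicities can split and recombine; only the total parity is robust).

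The repair is to trade the Schur reductions for the \emph{single} local determinant functional. Fix $T=\mathfrak{L}(\lambda_{0})$ and a pair $\mathcal{P}$ of $T$-projections, and set $d(\lambda):=\mathfrak{D}_{T,\mathcal{P}}(\mathfrak{L}(\lambda))$. By Definition~\ref{de2.2} and \eqref{ii.7}, $d$ is a $\mathcal{C}^{r}$ real-valued function near $\lambda_{0}$ with $\ord_{\lambda_{0}}d=\chi[\mathfrak{L},\lambda_{0}]=:m\le\kappa\le r$, so Taylor's theorem gives $\sign\bigl(d(\lambda_{0}-\eta)\bigr)\sign\bigl(d(\lambda_{0}+\eta)\bigr)=(-1)^{m}$ for $\eta$ small. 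For the analytic approximant $\mathfrak{L}^{\eta}_{\omega}$ chosen uniformly close to $\mathfrak{L}$ (as in Theorem~\ref{th4.1}), the function $d_{\omega}(\lambda):=\mathfrak{D}_{T,\mathcal{P}}(\mathfrak{L}^{\eta}_{\omega}(\lambda))$ is real analytic, has the same signs as $d$ at $\lambda_{0}\pm\eta$, and its zeros in $(\lambda_{0}-\eta,\lambda_{0}+\eta)$ are precisely $\mu_{1},\dots,\mu_{n}$ with orders $\chi[\mathfrak{L}^{\eta}_{\omega},\mu_{i}]$; a one-variable sign count across those zeros yields $\sign\bigl(d_{\omega}(\lambda_{0}-\eta)\bigr)\sign\bigl(d_{\omega}(\lambda_{0}+\eta)\bigr)=(-1)^{\sum_{i}\chi[\mathfrak{L}^{\eta}_{\omega},\mu_{i}]}$. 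Equating the two sign products gives $(-1)^{\sum_{i}\chi[\mathfrak{L}^{\eta}_{\omega},\mu_{i}]}=(-1)^{m}$, and combined with Theorem~\ref{th4.1} this delivers \eqref{iv.1}. This is a parity (mod-2) comparison, not a multiplicity-conservation statement, and it replaces the Hurwitz argument with elementary sign bookkeeping for a single scalar $\mathcal{C}^{r}$ function.
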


The  identity \eqref{iv.1} establishes a sharp connection between the topological notion of parity and the algebraic concept of multiplicity. Since the localized parity detects any change of orientation, \eqref{iv.1} does make intrinsic to the concept of algebraic multiplicity any change of orientation. This is rather coherent with the fact that $\chi$ measures how the curve $\mathfrak{L}$ intersects $\mathcal{S}(U)$ through the concept of local intersection index. As the changes of orientation are materialized by any transversal crossing with $\mc{S}(U)$, it is reasonable to think that the
intersection cardinal is odd in all these cases. Here relies precisely the relevance of Corollary \ref{cr4.2}. As a rather direct consequence of \eqref{iv.1}, it follows from Theorem \ref{th32} that the parity also is closely related to the local intersection index through the identity
\begin{equation}\label{C}
\sigma(\mathfrak{L},\l_{0})= (-1)^{i(\mathcal{D}_{M}^{-1}(0),\lambda I_{M} - \mathscr{L}_{\l_{0}}(\mathfrak{L}^{\omega}_{\mathbb{C}});\lambda_{0} I_{M}-\mathscr{L}_{\l_{0}}(\mathfrak{L}^{\omega}_{\mathbb{C}}))},
\end{equation}
where, given  any analytic curve, $\mathfrak{L}^{\omega}$,  $\mathcal{A}$-homotopic to $\mathfrak{L}$,
$\mathscr{L}_{\l_{0}}(\mathfrak{L}^{\omega}_{\mathbb{C}})\in\mathcal{L}(\mathbb{C}^{M})$ stands for the Schur reduction of the complexification $\mathfrak{L}^{\omega}_{\mathbb{C}}$ of $\mathfrak{L}^{\omega}$ at $\l_{0}$.
\par
In \cite{JJ2}, the authors axiomatized the parity within the vain
 of Theorem \ref{th24} for $\chi$. Precisely, the following result was established, where,
 for a given interval $I\subset\mathbb{R}$, $\mathcal{H}_{\l_{0}}(I,\Phi_{0}(U))$
stands for the space of analytic curves $\mathfrak{L}:I\to\Phi_{0}(U)$ such that $\mathfrak{L}(\l)\in GL(U)$ for each $\l\neq\l_{0}$.

\begin{theorem}
	\label{th4.3}
	For every $\varepsilon>0$ and $\lambda_{0}\in\mathbb{R}$, there exists a unique $\mathbb{Z}_{2}$-valued map
$$
	\sigma(\cdot,\lambda_{0}):\;\; \mathcal{H}_{\lambda_0}\equiv  \mathcal{H}_{\lambda_0}((\lambda_{0}-\varepsilon,\lambda_{0}+\varepsilon),\Phi_{0}(U))
	\longrightarrow\mathbb{Z}_{2}
$$
	such that:
	\begin{enumerate}
		\item[{\rm (N)}]  \textbf{Normalization:}  $\sigma(\mathfrak{L},\lambda_{0})=1$ if $\mathfrak{L}(\lambda_{0})\in GL(U)$, and there exists a rank one projection $P_{0}\in\mathcal{L}(U)$ such that
\begin{equation*}
		\sigma\left((\lambda-\lambda_{0})P_{0}+I_{U}-P_{0},\l_0\right)=-1.
\end{equation*}

		\item[{\rm (P)}]  \textbf{Product Formula:} For every  $\mathfrak{L}, \mathfrak{M}\in\mathcal{H}_{\lambda_{0}}$,
		\begin{equation*} \sigma(\mathfrak{L}\circ\mathfrak{M},\lambda_{0})=\sigma(\mathfrak{L},\lambda_{0})
		\cdot\sigma(\mathfrak{M},\lambda_{0}).
		\end{equation*}
	\end{enumerate}
	Moreover, for every $\mathfrak{L}\in\mathcal{H}_{\lambda_{0}}$, the parity map is given by
	\begin{equation*}
	\sigma(\mathfrak{L},\lambda_{0})=(-1)^{\chi[\mathfrak{L},\lambda_{0}]}.
	\end{equation*}
\end{theorem}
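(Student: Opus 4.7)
The plan is to prove existence and uniqueness separately.

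For existence, I would take $\sigma(\mathfrak{L},\lambda_{0}) := (-1)^{\chi[\mathfrak{L},\lambda_{0}]}$, which is precisely the localized parity of Section 4 restricted to $\mathcal{H}_{\lambda_0}$ (cf.\ Corollary \ref{cr4.2}). For $\mathfrak{L} \in \mathcal{H}_{\lambda_0}$ the point $\lambda_0$ is an isolated generalized eigenvalue, so $\lambda_0 \in \Alg(\mathfrak{L})$ and $\chi[\mathfrak{L},\lambda_0]$ is a finite non-negative integer, making $\sigma$ well-defined with values in $\mathbb{Z}_2$. Axiom (N) follows from the properties of $\chi$ recorded after Theorem \ref{th24}: $\chi = 0$ exactly when $\mathfrak{L}(\lambda_0) \in GL(U)$, and the (NP) axiom of Theorem \ref{th24} gives $\chi = 1$ on the rank-one projection curve. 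Axiom (P) is the mod-2 reduction of (PF) of that same theorem.

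For uniqueness, let $\tau$ satisfy (N) and (P). My first move would be to extend the normalization from the distinguished projection $P_0$ of axiom (N) to every rank-one projection $\Pi$: any two rank-one projections in $U$ are conjugate by some $g \in GL(U)$, so writing $(\lambda-\lambda_0)\Pi + I_U - \Pi = g \circ [(\lambda-\lambda_0)P_0 + I_U - P_0] \circ g^{-1}$ and applying (P) with the constant curves $g, g^{-1}$ (both lying in $\mathcal{H}_{\lambda_0}$ with value in $GL(U)$ at $\lambda_0$, hence sent to $1$ by $\tau$ via (N)) forces $\tau$ to take the value $-1$ on every such rank-one projection curve.

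Next, using the Esquinas--L\'opez-G\'omez transversalization (Theorems 4.3.2 and 5.3.3 of \cite{LG01}), for any $\mathfrak{L} \in \mathcal{H}_{\lambda_0}$ there is a polynomial curve $\Phi$ with $\Phi(\lambda_0) = I_U$ such that $\mathfrak{L}^\Phi := \mathfrak{L} \circ \Phi$ is $\kappa$-transversal at $\lambda_0$. Since $\Phi$ is invertible at $\lambda_0$, axioms (N) and (P) jointly give $\tau(\mathfrak{L},\lambda_0) = \tau(\mathfrak{L}^\Phi, \lambda_0)$, while \eqref{ii.12} gives $\chi[\mathfrak{L},\lambda_0] = \chi[\mathfrak{L}^\Phi,\lambda_0] =: m$. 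Setting $X_j := \mathfrak{L}^\Phi_j(\bigcap_{i=0}^{j-1}\Ker(\mathfrak{L}^\Phi_i))$, the transversality decomposition $U = \bigoplus_j X_j \oplus R(\mathfrak{L}^\Phi_0)$ allows one to iteratively peel off one Jordan chain at a time and build a factorization $\mathfrak{L}^\Phi = \ell_{\Pi_1} \circ \cdots \circ \ell_{\Pi_m} \circ \mathfrak{R}$, where each $\ell_{\Pi_i}(\lambda) = (\lambda-\lambda_0)\Pi_i + I_U - \Pi_i$ is a rank-one projection curve and $\mathfrak{R}(\lambda_0) \in GL(U)$. Applying (P) $m$ times, together with the extended normalization and $\tau(\mathfrak{R}) = 1$, yields $\tau(\mathfrak{L}^\Phi, \lambda_0) = (-1)^m = (-1)^{\chi[\mathfrak{L},\lambda_0]}$, closing the uniqueness argument.

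The main obstacle will be the explicit construction of the rank-one factorization in the last step: translating the abstract direct-sum decomposition provided by $\kappa$-transversality into a genuine iterated composition of rank-one projection-type factors in an infinite-dimensional Banach space requires careful Schur-type block manipulations (mirroring the local determinant $\mathfrak{D}_{T,\mathcal{P}}$ of Section 2 via the Banachiewicz identity) and an inductive argument on $m$ in which one selects a suitable rank-one projection at each step while verifying that the residual curve remains a Fredholm path with algebraic multiplicity exactly reduced by one.
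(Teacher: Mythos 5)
The paper itself does not give a proof of Theorem~\ref{th4.3}; it cites the authors' companion paper \cite{JJ2} for it, so there is no in-paper argument to compare against. Evaluated on its own merits, your proposal has the right architecture and correctly identifies the crux. The existence step is routine, exactly as you describe: on $\mathcal{H}_{\lambda_0}$ the quantity $\chi[\mathfrak{L},\lambda_0]$ is a finite nonnegative integer (analyticity plus $\mathfrak{L}(\lambda)\in GL(U)$ for $\lambda\ne\lambda_0$ give $\lambda_0\in\Alg(\mathfrak{L})$ by Theorem 4.4.4 of \cite{LG01}), so $(-1)^{\chi}$ is a well-defined $\mathbb{Z}_2$-valued map, (N) follows from the listed properties of $\chi$, and (P) is the mod-$2$ shadow of (PF). For uniqueness, the three-step scheme --- extend (N) to every rank-one projection by conjugation, replace $\mathfrak{L}$ by its transversalization $\mathfrak{L}^\Phi$ without changing $\tau$, and then reduce to a product of rank-one projection curves --- is precisely how one mirrors the Mora-Corral axiomatization of $\chi$ (Theorem~\ref{th24}), so the route you take is the natural one and you correctly name the Schur/local-Smith-form machinery of \cite{LG01,LGMC} as the tool that produces the factorization $\mathfrak{L}^\Phi=\ell_{\Pi_1}\circ\cdots\circ\ell_{\Pi_m}\circ\mathfrak{R}$.

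Two points are worth pinning down before the argument is complete. First, in the conjugation step you need that any two rank-one projections on a Banach space $U$ are conjugate in $GL(U)$; this holds because any two closed subspaces of the same finite codimension are isomorphic, but it is a nontrivial fact and should be invoked explicitly. Second, and more substantively, the transversalizing curve $\Phi$ from Theorems 4.3.2 and 5.3.3 of \cite{LG01} satisfies $\Phi(\lambda_0)=I_U$, hence $\Phi(\lambda)\in GL(U)$ only for $\lambda$ in a possibly small neighbourhood of $\lambda_0$; on the fixed interval $(\lambda_0-\varepsilon,\lambda_0+\varepsilon)$ of the theorem statement, $\Phi$ need not belong to $\mathcal{H}_{\lambda_0}$, so you cannot feed it directly into (P). Either one must first establish that any $\tau$ obeying (N) and (P) depends only on the germ of $\mathfrak{L}$ at $\lambda_0$ (allowing you to shrink the interval), or one has to replace $\Phi$ by a transversalizing curve that is globally invertible on the interval. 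The same caveat applies to the residual factor $\mathfrak{R}$ in the Smith-form factorization. These are fillable technicalities rather than conceptual errors, but they do need to be addressed for the uniqueness argument to close on the stated domain.
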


Next we will  discuss and sharpen the concept of parity to closed curves,  $\sigma(\cdot,\mathbb{S}^{1})$, going back to Fitzpatrick and Pejsachowiz in \cite{FP3}. Precisely, we consider the circle $\mathbb{S}^{1}$ as obtained from $[a,b]$ by identifying $a$ and $b$, and, for every  $\mathfrak{L}\in\mathcal{C}(\mathbb{S}^{1},\Phi_{0}(U))$, we define $\sigma(\mathfrak{L},\mathbb{S}^{1})$ as
\begin{equation*}
\sigma(\mathfrak{L},\mathbb{S}^{1}):=\deg\left(\mathfrak{P}(a)\circ \mathfrak{P}(b)^{-1}\right),
\end{equation*}
where $\mathfrak{P}:[a,b]\to \Phi_{0}(U)$ is any parametrix of $\mathfrak{L}$ and $\deg\equiv \deg_{LS}$ stands for the Leray--Schauder degree. If $\mathfrak{L}\in\mathscr{C}([a,b],\Phi_{0}(U))$ is closed, i.e., $\mathfrak{L}(a)=\mathfrak{L}(b)$, then these two notions of parity coincide. Indeed, given an admissible closed path $\mathfrak{L}\in\mathscr{C}([a,b],\Phi_{0}(U))$ and a parametrix of it, $\mathfrak{P}:[a,b]\to GL(U)$, thanks to the properties of the Leray--Schauder degree and setting  $\mathfrak{L}(a)=\mathfrak{L}(b)\equiv T\in GL(U)$, we find that
\begin{align*}
 \s(\mathfrak{L},[a,b]) & =\deg(\mathfrak{P}(a)\circ \mathfrak{L}(a))\deg(\mathfrak{P}(b)\circ \mathfrak{L}(b))\\
& =\deg(\mathfrak{P}(a)\circ T)\deg(T^{-1}\circ \mathfrak{P}^{-1}(b))\deg(T^{-1}\circ\mathfrak{P}^{-1}(b))\deg(\mathfrak{P}(b)\circ T) \\
& =\deg(\mathfrak{P}(a)\circ \mathfrak{P}^{-1}(b))\deg(T^{-1}\circ T)=\deg(\mathfrak{P}(a)\circ \mathfrak{P}^{-1}(b))=\sigma(\mathfrak{L},\mathbb{S}^{1}),
\end{align*}
as claimed before. As established by Fitzpatrick and Pejsachowiz \cite{FP3}, this new notion
of parity is also homotopy invariant.
\par
When, in addition, $U$ is of Kuiper type, i.e., $GL(U)$ is contractible, then the space $\Phi_{0}(U)$ is path-connected (see, e.g.,  \cite[Pr. 1.3.5]{FP3}). Thus, the Poincar\'{e} group $\pi_{1}(\Phi_{0}(U),T)$ does not depend on the chosen base point $T\in\Phi_{0}(U)$, i.e.,
$$
 \pi_{1}(\Phi_{0}(U),T)\equiv\pi_{1}(\Phi_{0}(U)).
$$
In such case,  one can introduce the map
\begin{equation}
\label{PEUI}
\sigma:\pi_{1}(\Phi_{0}(U))\longrightarrow \mathbb{Z}_{2}, \qquad \sigma([\gamma]):=\sigma(\gamma,\mathbb{S}^{1});
\end{equation}
$\sigma$ is well defined since it is invariant by homotopy and moreover, it defines a group isomorphism.  Thus, $\pi_{1}(\Phi_{0}(U))\simeq \mathbb{Z}_{2}$ if $U$ is of Kuiper type. Hence, the parity map descrives some non trivial features of the topology of $\Phi_{0}(U)$.
\par
We end this section by establishing a link between  the algebraic multiplicity $\chi$ and
the notion of parity for closed curves $\sigma(\cdot,\mathbb{S}^{1})$. By adapting the proof of \cite[Th. 4.5]{JJ}, it is easily seen that, for every $\mathfrak{L}\in\mathcal{C}(\mathbb{S}^{1},\Phi_{0}(U))$, there exists an analytic path  $\mathfrak{L}_{\o}\in\mathcal{H}(\mathbb{S}^{1},\Phi_{0}(U))$, homotopic to $\mf{L}$, with some regular point $\l_{0}\in\mathbb{S}^{1}$, i.e., such that $\mathfrak{L}_{\o}(\l_{0})\in GL(U)$.  Since $\mathfrak{L}_{\o}$ is analytic and has a regular point, by Theorem 4.4.4 of \cite{LG01},
$$
  \Sigma(\mathfrak{L}_{\o})=\{\l_{1},\l_{2},...,\l_{n}\}.
$$
Moreover, by homotopy invariance, $\sigma(\mathfrak{L},\mathbb{S}^{1})=\sigma(\mathfrak{L}_{\o},\mathbb{S}^{1})$.
Thus, reparameterizing the curve $\mathfrak{L}_{\o}$ with an isometry on $\mathbb{S}^{1}$ so that $a\equiv 0$, and denoting, once more, by $\mathfrak{L}_{\o}$ such a re-parametrization,  it follows from Theorem \ref{th4.1} that
\begin{equation*}
\sigma(\mathfrak{L}_{\o},\mathbb{S}^{1})=\sigma(\mathfrak{L}_{\o},[0,1])=(-1)^{\sum_{i=1}^{n}\chi[\mathfrak{L}_{\o},\l_{i}]}.
\end{equation*}
Therefore, the next result holds.

\begin{theorem}
\label{th4.8}
For every $\mathfrak{L}\in \mathcal{C}(\mathbb{S}^{1},\Phi_{0}(U))$,
\begin{equation*}
	\sigma(\mathfrak{L},\mathbb{S}^{1})=(-1)^{\sum_{i=1}^{n}\chi[\mathfrak{L}_{\o},\l_{i}]}
\end{equation*}
where $\mathfrak{L}_{\o}\in\mathscr{H}(\mathbb{S}^{1},\Phi_{0}(U))$ is homotopic to $\mathfrak{L}$ and $\Sigma(\mathfrak{L}_{\o})=\{\l_{1},\l_{2},...,\l_{n}\}$.
\end{theorem}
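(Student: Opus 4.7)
The plan is to establish the identity by reducing, step by step, to a situation where Theorem \ref{th4.1} applies directly. The overall pathway is: (i) replace $\mf{L}$ by an analytic path $\mf{L}_\o$ homotopic to it and possessing at least one regular point; (ii) use homotopy invariance of $\s(\cdot,\mathbb{S}^1)$ to transport the problem to $\mf{L}_\o$; (iii) cut $\mathbb{S}^1$ at the regular point to regard $\mf{L}_\o$ as an admissible closed curve on a compact interval, exploiting the equivalence between $\s(\cdot,\mathbb{S}^1)$ and $\s(\cdot,[a,b])$ for admissible closed paths established just before the statement; and (iv) apply Theorem \ref{th4.1}, combined with the fact that the analytic structure forces $\Sigma(\mf{L}_\o)$ to be finite by Theorem 4.4.4 of \cite{LG01}.

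For step (i), I would mimic the construction in the proof of Theorem 4.5 of \cite{JJ}: first approximate $\mf{L}:\mathbb{S}^1\to\Phi_0(U)$ in the $\mathcal{C}^0$-norm by a smooth map, and then use a Weierstrass-type density argument (in the analytic topology on $\mathbb{S}^1$ viewed as a real-analytic manifold) to obtain an analytic $\mf{L}_\o\in\mathcal{H}(\mathbb{S}^1,\Phi_0(U))$ with $\|\mf{L}-\mf{L}_\o\|_\infty$ as small as desired. Since $\Phi_0(U)$ is open in $\mathcal{L}(U)$ and contains operators in $GL(U)$ arbitrarily close to any given point on the image, one can further perturb $\mf{L}_\o$ slightly to ensure the existence of a point $\l_0\in\mathbb{S}^1$ with $\mf{L}_\o(\l_0)\in GL(U)$. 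A straight-line homotopy (staying inside $\Phi_0(U)$ by smallness of the perturbation) then provides an admissible homotopy connecting $\mf{L}$ and $\mf{L}_\o$.

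For step (ii), homotopy invariance of $\s(\cdot,\mathbb{S}^1)$, established by Fitzpatrick and Pejsachowicz in \cite{FP3} via the parametrix formulation, yields
\begin{equation*}
\s(\mf{L},\mathbb{S}^1)=\s(\mf{L}_\o,\mathbb{S}^1).
\end{equation*}
For step (iii), one reparametrizes $\mathbb{S}^1\simeq [0,1]/(0\sim 1)$ by an isometry so that the regular point $\l_0$ is identified with $0\equiv 1$; the resulting analytic curve, which we still denote by $\mf{L}_\o$, lies in $\mathscr{H}([0,1],\Phi_0(U))$ because $\mf{L}_\o(0)=\mf{L}_\o(1)\in GL(U)$, and the computation displayed just above the theorem shows that in this closed admissible case
\begin{equation*}
\s(\mf{L}_\o,[0,1])=\s(\mf{L}_\o,\mathbb{S}^1).
\end{equation*}
Since $\mf{L}_\o$ is analytic and has a regular point, Theorem 4.4.4 of \cite{LG01} forces $\Sigma(\mf{L}_\o)=\{\l_1,\ldots,\l_n\}$ to be finite, and each $\l_i$ lies in $\Alg(\mf{L}_\o)$. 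Finally, step (iv) invokes Theorem \ref{th4.1} to conclude
\begin{equation*}
\s(\mf{L}_\o,[0,1])=(-1)^{\sum_{i=1}^n\chi[\mf{L}_\o,\l_i]},
\end{equation*}
and chaining the three equalities delivers the stated formula.

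The main obstacle I expect is step (i): guaranteeing simultaneously that $\mf{L}_\o$ is analytic on $\mathbb{S}^1$, that it stays close to $\mf{L}$ throughout a straight-line homotopy in $\Phi_0(U)$, and that it admits at least one regular point. All three requirements are essentially density statements, but combining them on the compact manifold $\mathbb{S}^1$ requires care to avoid the pathological case $\Sigma(\mf{L}_\o)=\mathbb{S}^1$; the trick is to first ensure the regular point by a finite-rank perturbation supported near some $\l_0$ where $\mf{L}(\l_0)$ can be pushed into $GL(U)$, and then to approximate analytically while preserving this property by continuity. Once that technical point is in hand, the remainder of the argument is a direct assembly of results already proved in the excerpt.
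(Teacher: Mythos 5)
Your proposal is correct and follows essentially the same pathway as the paper's own argument: approximate $\mathfrak{L}$ by an analytic loop $\mathfrak{L}_\omega$ with a regular point (adapting the proof of Theorem 4.5 of \cite{JJ}), invoke Theorem 4.4.4 of \cite{LG01} for finiteness of $\Sigma(\mathfrak{L}_\omega)$, use homotopy invariance of $\sigma(\cdot,\mathbb{S}^1)$, reparametrize so the regular point coincides with the cut point, and finish with Theorem \ref{th4.1} via the coincidence $\sigma(\mathfrak{L}_\omega,[0,1])=\sigma(\mathfrak{L}_\omega,\mathbb{S}^1)$ established just before the theorem.
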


\section{K-Theory and spectral theory}

\noindent In this section, which can be considered to be the central section of the article, we connect the theory of real vector bundles with nonlinear spectral theory.  More specifically, we will characterize the obstruction detected by the first Stiefel--Whitney class of vector bundles through spectral properties by means of the Atiyah--J\"anich map.
\par
We begin by recalling some basic facts to introduce the Atiyah--J\"anich morphism. Two real vector bundles, $E$ and $F$, over a compact path-connected topological space $X$, are said to be \emph{stably equivalent} if there are  $N$, $M\in\mathbb{N}$ such that
\begin{equation*}
E\oplus \underline{\mathbb{R}}^{N}\simeq F\oplus \underline{\mathbb{R}}^{M},
\end{equation*}
where $\underline{\mathbb{R}}^{i}$ denotes the trivial bundle $X\times\mathbb{R}^{i}$ of rank $i$ over $X$ for each $i\in\{N,M\}$, $\oplus$ stands for the Whitney sum of vector bundles,  and $\simeq$ expresses that both real vector bundles are isomorphic. Naturally, the stable equivalence induces an equivalence relation in the set  of isomorphism classes of real vector bundles over  $X$,  denoted by $\Vect(X)$, whose associated quotient is the reduced Grothendieck group, $\tilde{K}\mathcal{O}(X)$. Given a real Banach space $U$, the device that connects vector bundle theory and Fredholm operators is the so-called Atiyah--J\"{a}nich index map $$\mathfrak{Ind}:[X,\Phi_{0}(U)]\longrightarrow \tilde{K}\mathcal{O}(X)$$ introduced by Atiyah, \cite{At, Ja},  which is a sort of generalization of the classical notion of index of a Fredholm operator. The notation $[X,\Phi_{0}(U)]$ stands for the set of homotopy classes of continuous maps $X\to \Phi_{0}(U)$. This map is a homomorphism of groups and makes exact the sequence
\begin{equation*}
[X,GL(U)]\overset{i_{\ast}}{\longrightarrow} [X,\Phi_{0}(U)]\overset{\mathfrak{Ind}}{\longrightarrow}\tilde{K}\mathcal{O}(X)
\end{equation*}
where $i_{\ast}$ is the canonical inclusion, i.e.,
$[X,GL(U)]=\Ker \mathfrak{Ind}$. Some reasonably self-contained references for these materials are Mukherjee \cite[Ch. 1,2]{Mu}, Cohen \cite{Cohen}, Zaidenberg et al. \cite{ZaKr} and Husemoller \cite{HM}.

As we need this morphism to be an isomorphism, we describe the real Banach spaces $U$ for which this happens.  We say that that a real Banach space $U$ admits a symmetric basis if there exists an unconditional basis $\{u_{n}\}_{n\in\mathbb{N}}$ on $U$ such that the following two conditions are satisfied:
\begin{enumerate}
	\item For any two sequence of scalars $\{\alpha_{n}\}_{n\in\mathbb{N}}$, $\{\beta_{n}\}_{n\in\mathbb{N}}\subset \mathbb{R}$ such that $|\beta_{n}|\leq |\alpha_{n}|$ for each $n\in\mathbb{N}$, if the series $\sum_{n\in\mathbb{N}}\alpha_{n}u_{n}$ converges, then $\sum_{n\in\mathbb{N}} \beta_{n}u_{n}$ also converges and
	\begin{equation*}
	\left\|\sum_{n\in\mathbb{N}}\beta_{n}u_{n}\right\|\leq \left\|\sum_{n\in\mathbb{N}}\alpha_{n}u_{n}\right\|.
	\end{equation*}
	\item If $\{\alpha_{n}\}_{n\in\mathbb{N}}\subset\mathbb{R}$ is a sequence such that $\sum_{n\in\mathbb{N}}\alpha_{n}u_{n}$ converges, then necessarily $\sum_{n\in\mathbb{N}}\alpha_{n}u_{\sigma(n)}$ also converges for any permutation $\sigma:\mathbb{N}\to\mathbb{N}$ and 
	\begin{equation*}
	\left\|	\sum_{n\in\mathbb{N}}\alpha_{n}u_{n}\right\|=\left\|\sum_{n\in\mathbb{N}}\alpha_{n}u_{\sigma(n)}\right\|.
	\end{equation*}
\end{enumerate}
Under this definition, we can state the following result whose proof can be found in Zaidenberg et al. \cite[Theorem 2.3]{ZaKr}.

\begin{theorem}\label{T10.1.111}
	Let $X$ be a compact path-connected topological space and $U$ be a real Banach space of Kuiper type that contains a complemented infinite dimensional subspace admitting a symmetric basis. Then the index map
	\begin{equation*}
	\mathfrak{Ind}:[X,\Phi_{0}(U)]\longrightarrow \tilde{K}\mathcal{O}(X)
	\end{equation*}
	is an isomorphism of groups.
\end{theorem}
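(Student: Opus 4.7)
The strategy is to exploit the exact sequence
\begin{equation*}
[X,GL(U)]\overset{i_{\ast}}{\longrightarrow} [X,\Phi_{0}(U)]\overset{\mathfrak{Ind}}{\longrightarrow}\tilde{K}\mathcal{O}(X)
\end{equation*}
recalled in the text together with the two hypotheses on $U$, handling injectivity and surjectivity in turn. Injectivity will follow essentially for free from the Kuiper type assumption, while the symmetric basis hypothesis will be used in a decisive way to construct preimages.

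For \textbf{injectivity}, I would argue that since $U$ is of Kuiper type, the topological group $GL(U)$ is contractible. Contractibility of the target implies that every continuous map $X\to GL(U)$ is null-homotopic (via the pointwise contraction), so $[X,GL(U)]$ reduces to a single element, necessarily the identity of the group structure. By exactness at $[X,\Phi_{0}(U)]$, this gives $\ker\mathfrak{Ind}=\mathrm{im}\,i_{\ast}=\{0\}$, which is precisely injectivity. This part does not use the symmetric basis at all.

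For \textbf{surjectivity}, given a class $[E]\in\tilde{K}\mathcal{O}(X)$, I would realize it as the index of a concrete Fredholm family constructed from $E$. Since $X$ is compact, $E$ is a direct summand of some trivial bundle $\underline{\mathbb{R}}^{N}$, say $E\oplus F\simeq \underline{\mathbb{R}}^{N}$. The plan is to use the symmetric basis to split the complemented subspace $V\subset U$ as a countable unconditional direct sum of copies of itself, producing a decomposition $U=U_{0}\oplus U_{1}$ with $U_{1}\simeq U$ (as Banach spaces) and $U_{0}$ containing a distinguished copy of $\mathbb{R}^{N}$; the unconditional/symmetric property guarantees that such a splitting is continuous and independent of choices up to isomorphism. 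Using the fiberwise projection onto $E$ inside $X\times\mathbb{R}^{N}\hookrightarrow X\times U_{0}$, and extending by the identity on $U_{1}$, one defines a continuous family $h:X\to\Phi_{0}(U)$ whose pointwise kernel bundle is (stably) $E$ and whose cokernel bundle is trivial, so that $\mathfrak{Ind}([h])=[E]$ in $\tilde{K}\mathcal{O}(X)$. Additive inverses $-[E]$ are handled dually, interchanging the roles of kernel and cokernel.

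The \textbf{main obstacle} is precisely the surjectivity construction: one must verify that the fiberwise assignment above is actually continuous as a map into $\Phi_{0}(U)$, and that the resulting class in $\tilde{K}\mathcal{O}(X)$ is independent of the ambient embedding $E\hookrightarrow \underline{\mathbb{R}}^{N}$ and of the choice of complement. This is where the symmetric basis hypothesis does its real work: it ensures that two different complemented embeddings of a finite-dimensional subspace into $U$ differ by an element of $GL(U)$, and hence (using Kuiper type again) are continuously homotopic. Once this is established, a routine check verifies that $[h]\mapsto\mathfrak{Ind}([h])$ sends the so-constructed class to $[E]$ modulo trivial bundles, yielding surjectivity and completing the isomorphism, in line with Theorem 2.3 of Zaidenberg et al.~\cite{ZaKr}.
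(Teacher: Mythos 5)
The paper does not actually prove this theorem: it cites Zaidenberg, Krein, Kuchment and Pankov \cite[Theorem 2.3]{ZaKr} for the full argument, so there is no in-paper proof to compare yours against. That said, your blind attempt has the right overall shape (exactness at $[X,\Phi_{0}(U)]$ plus Kuiper contractibility for injectivity, an explicit realization for surjectivity), and the injectivity half is essentially correct as stated.

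The surjectivity construction, however, has a real gap. As written, you propose to take the fiberwise projection onto $E_{x}\subset\mathbb{R}^{N}\subset U_{0}$ and extend it by the identity on $U_{1}$. But a projection-type operator of this form has $\ker h(x)$ and $\operatorname{coker} h(x)$ \emph{both} equal to the same bundle (either $E$ or its complement in $\underline{\mathbb{R}}^{N}$, depending on which projection you use), so the resulting index bundle $[\ker h]-[\operatorname{coker} h]$ vanishes in $\tilde{K}\mathcal{O}(X)$ rather than equaling $[E]$. What the argument actually needs is a \emph{shift-type} family: one uses the symmetric basis to write $U\simeq U\oplus\mathbb{R}^{N}$, builds a fixed Fredholm operator $S$ of index $-k$ (a $k$-fold shift), and composes it with a family that has kernel $E$ and trivial cokernel, so that the composite lies in $\Phi_{0}(U)$ and has index bundle $[E]-[\underline{\mathbb{R}}^{k}]=[E]$. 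This is precisely where the symmetric-basis hypothesis does its work, and it is not reflected in your sketch. A further implicit gap is that you treat $[X,\Phi_{0}(U)]$ as a group from the outset; $\Phi_{0}(U)$ is only a topological monoid under composition, and the fact that $[X,\Phi_{0}(U)]$ is a group (every Fredholm family admits a homotopy-inverse) is itself a nontrivial ingredient that must be established — it is part of what \cite[Theorem 2.3]{ZaKr} proves, not something one may assume for free.
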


In what follows, we will say that a real Banach space $U$ is \textit{admissible} if it is of Kuiper type and it contains a complemented infinite dimensional subspace admitting a symmetric basis. For example, every real infinite dimensional separable Hilbert space is admissible. Along this section, $U$ will always denote an admissible real Banach space.

\subsection{Orientability of Vector Bundles} A real vector bundle $E\to X$ is said to be orientable if  it admits a trivializing atlas
whose transition functions have positive determinant. This property can be judged by means of characteristic classes, or through the determinant line bundle, being both approaches equivalent.
According to, e.g., Husemoller \cite[Th. 12.1]{HM}, the first  Stiefel--Whitney class of $E$, $\omega_1(E)\in H^{1}(X,\mathbb{Z}_{2})$, is zero if and only if the real bundle $E$ is orientable, where $H^{1}(X,\mathbb{Z}_{2})$ is the first cohomology group of $X$ with coefficients in $\mathbb{Z}_{2}$.
It turns out that a vector bundle $E$ is orientable if and only if its associated determinant line bundle, $\det E:=\wedge^{n} E$, is trivial.
\par
Since, for every continuous map $h:X\to\Phi_{0}(U)$, $\mathfrak{Ind}[h]\in\tilde{K}\mathcal{O}(X)$ is a stable equivalence class of vector bundles, we should make precise what
orientability means for these vector bundles. As  the first Stiefel--Whitney class, $\omega_{1}:\Vect(X)\to H^{1}(X,\mathbb{Z}_{2})$, depends only on the stable equivalence class, it induces, in a natural way, a morphism
\begin{equation*}
w_{1}:\tilde{K}\mathcal{O}(X)\longrightarrow H^{1}(X,\mathbb{Z}_{2}).
\end{equation*}
Thus, it is rather natural to agree that the class $E\in\tilde{K}\mathcal{O}(X)$ is orientable if $w_{1}(E)=0$. In particular, for every class $E\in\tilde{K}\mathcal{O}(X)$, the first Stiefel--Whitney class can be seen a morphism $\pi_{1}(X)\to \mathbb{Z}_{2}$. Indeed, as a consequence of the universal coefficient theorem, the next group isomorphism can be easily established
$$
H^{1}(X,\mathbb{Z}_{2})\simeq \Hom(\pi_{1}(X),\mathbb{Z}_{2}),
$$
i.e., one can identify $H^{1}(X,\mathbb{Z}_{2})$ with $\Hom(\pi_{1}(X),\mathbb{Z}_{2})$. Let us describe this isomorphism. Following \cite[Sect. 2]{FP4} and denoting $\pi_{1}(X)\equiv\pi_{1}(X,x_{0})$, each homomorphism $\varphi:\pi_{1}(X)\to\mathbb{Z}_{2}$ sends the commutator of $\pi_{1}(X)$, $[\pi_{1}(X),\pi_{1}(X)]$, to zero $0\in\mathbb{Z}_{2}$. Thus, it induces a homomorphism
$$
\tilde{\varphi}:\frac{\pi_{1}(X)}{[\pi_{1}(X),\pi_{1}(X)]}\simeq H_{1}(X,\mathbb{Z}_{2})\longrightarrow \mathbb{Z}_{2}.
$$
By the universal coefficient theorem, each $\tilde{\varphi}:H_{1}(X,\mathbb{Z}_{2})\to\mathbb{Z}_{2}$ corresponds to a unique cohomology class $w\in H^{1}(X,\mathbb{Z}_{2})$. The inverse isomorphism $\Gamma:H^{1}(X,\mathbb{Z}_{2})\to \Hom(\pi_{1}(X),\mathbb{Z}_{2})$ can be described explicitly as follows. If $w\in H^{1}(X,\mathbb{Z}_{2})$, and $\gamma\in\pi_{1}(X)$ is represented by $g:\mathbb{S}^{1}\to X$, then
\begin{equation}
\label{UCT}
[\Gamma(w)](\gamma)=\langle g^{\ast}(w), [\mathbb{S}^{1}]\rangle_{\mathbb{Z}_{2}},
\end{equation}
where $g^{\ast}:H^{1}(X,\mathbb{Z}_{2})\to H^{1}(\mathbb{S}^{1},\mathbb{Z}_{2})$ is the induced morphism in cohomology, $[\mathbb{S}^{1}]$ is the generator of $H^{1}(\mathbb{S}^{1},\mathbb{Z}_{2})$, and
$$
\langle -,- \rangle_{\mathbb{Z}_{2}}: H_{1}(-,\mathbb{Z}_{2})\times H^{1}(-,\mathbb{Z}_{2})\to \mathbb{Z}_{2}
$$
is the Kronecker (duality) pairing.
\par
For every $E\in\mathfrak{Ind}([X,\Phi_{0}(U)])=\tilde{K}\mc{O}(X)$, one can also describe the orientability of $E$ in terms of its determinant bundle. Indeed, according to  Wang \cite{W}, for every $h\in [X,\Phi_{0}(U)]$, the determinant bundle of $\mathfrak{Ind}[h]$ can be defined as the line bundle
\begin{equation*}
\det \mathfrak{Ind}[h]:= \wedge^{\max} \Ker h \otimes (\wedge^{\max} \text{coKer} \ h)^{\ast}
\end{equation*}
where $\wedge^{\max}$ denotes the wedge product in the corresponding dimension of the vector space where we are defining the operation.
\par
The equivalence of these two (and some other) notions of orientability is far from evident, and it has been one of the central issues in topological degree theory for Fredholm operators. Let us explain briefly the situation.  The key feature supporting the Leray--Schauder degree, \cite{LS}, is the fact that the space of invertible operators $T:U\to U$ that are compact perturbations of the identity map, denoted by $GL_{K}(U)$, consists of two path-connected components. Thanks to this feature, the Leray--Schauder degree can be constructed in a rather similar way as the classical Brouwer degree \cite{Br} for continuous Euclidean maps. Although, at least a priori, the construction of a degree for Fredholm operators of index zero, should take into account the topological structure of the set of invertible operators, $GL(U)$, Kuiper \cite{K} established in 1965 that $GL(H)$ is contractible, and hence path-connected, for every real or complex infinite dimensional separable Hilbert space $H$. As a consequence of this result, the degree for Fredholm operators cannot be a $\mathbb{Z}$-valued degree for \textit{all} Fredholm maps. So, establishing a huge conceptual difference with respect to the Leray--Schauder degree.
\par
The first ideas to overcame this serious technical difficulty go back to Murkherjea \cite{MU} and Elworthy and Tromba \cite{ET}, where a $\mathbb{Z}$-valued degree for Fredholm maps was introduced by restricting the class of admissible maps. After these pioneering works, many other attempts came in that direction, until Fitzpatrick, Pejsachowicz and Rabier \cite{FP2,FPRa,FPRb,PR} constructed  a $\mathbb{Z}$-valued degree for Fredholm maps that seems to give a definitive answer to this problem. The main conceptual contribution of Fitzpatrick, Pejsachowicz and Rabier was to restrict the class of the admissible Fredhom maps to the \textit{orientable} ones. An orientable Fredholm map is a Fredholm map, $f:\O\subset U\to U$, where $\O$ is an open bounded domain of $U$,  for which its differential, $Df:\Omega\to\Phi_{0}(U)$, is an orientable map. For a given path connected topological space $X$, a continuous map $h:X\to\Phi_{0}(U)$ is said to be orientable in the sense of Fitzpatrick, Pejsachowicz and Rabier ($\mathfrak{F}$-orientable for short) if
\begin{equation*}
\sigma(h\circ \gamma,\mathbb{S}^{1})=1, \quad \text{ for all } [\gamma]\in \pi_{1}(X).
\end{equation*}
Roughly spoken, a map $h:X\to\Phi_{0}(U)$ is orientable if the subset $h(X)\cap GL(U)\subset \Phi_{0}(U)$ behaves as if it would consist of two path-connected components, like in the classical setting, so allowing the definition of a topological degree for these maps.
\par
More recently, in 1998, Benevieri and Furi constructed in \cite{BF1,BF2,BF3} another topological degree for oriented Fredholm maps based on another, rather algebraic, concept of orientability for maps $h:X\to\Phi_{0}(U)$, which will be subsequently refereed to as $\mathfrak{B}$-orientability. In 2005, Wang \cite{W} endowed the algebraic concept of orientability of Benevieri and Furi with a geometrical meaning, through the determinant bundle of the image of the index map of Atiyah--J\"{a}nich. Finally, J. Pejsachowicz \cite{JP} proved that all these notions of orientation do actually coincide. Next,  we will state a theorem collecting the main results of \cite{JP} adapted to our notations here. Besides it establishes the equivalence of the $\mathfrak{F}$-orientability and the $\mathfrak{B}$-orientability, it states that each of this notions of orientability for maps $X\to\Phi_{0}(U)$ corresponds, via the index map, to the notions of orientability for classes $E\in\tilde{K}\mathcal{O}(X)$ discussed previously.

\begin{theorem}
	\label{th1.1}
	Let $U$ be an admissible real Banach space, $X$ be a compact path-connected space, and $h:X\to\Phi_{0}(U)$ a continuous map. Then,
	\begin{enumerate}
		\item[{\rm (a)}]  $h$ is $\mathfrak{F}$-orientable if and only if $w_{1}(\mathfrak{Ind}[h])=0$ in $H^{1}(X,\mathbb{Z}_{2})$.
		\item[{\rm (b)}] $h$ is $\mathfrak{B}$-orientable if and only if $\det \mathfrak{Ind}[h]$ is a trivial line bundle.
	\end{enumerate}
	Moreover, $w_{1}(\mathfrak{Ind}[h])=0$ if and
	only if $\det \mathfrak{Ind}[h]$ is a trivial line bundle. Therefore, the next properties are equivalent: \begin{itemize}
		\item $h$ is $\mathfrak{F}$-orientable;
		\item $h$ is $\mathfrak{B}$-orientable;
		\item $\mathfrak{Ind}[h]$ is an orientable vector bundle.
	\end{itemize}
\end{theorem}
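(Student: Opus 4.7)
My plan is to reduce everything to a computation on loops, where the statement collapses to the circle. To each $h:X\to\Phi_0(U)$ and each $[\gamma]\in\pi_1(X)$ represented by a continuous $g:\mathbb{S}^1\to X$, I would associate $\Phi_h([\gamma]):=\sigma(h\circ g,\mathbb{S}^1)\in\mathbb{Z}_2$. The homotopy invariance of the closed-loop parity (Fitzpatrick and Pejsachowicz \cite{FP3}) shows that $\Phi_h$ is well defined on $\pi_1(X)$. To see that it is a group homomorphism I would realize the concatenation $\gamma_1\ast\gamma_2$ as the composite of the pinching $\mathbb{S}^1\to\mathbb{S}^1\vee\mathbb{S}^1$ with $g_1\vee g_2$, and then use the parametrix definition of $\sigma(\cdot,\mathbb{S}^1)$ together with the multiplicativity of the Leray--Schauder degree to express $\sigma$ of the concatenation as the product of parities. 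By definition, $h$ is $\mathfrak{F}$-orientable if and only if $\Phi_h\equiv 1$.

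For part (a), the universal coefficient identification \eqref{UCT} translates $w_1(\mathfrak{Ind}[h])=0$ into the vanishing of the homomorphism $\Gamma(w_1(\mathfrak{Ind}[h]))\in\Hom(\pi_1(X),\mathbb{Z}_2)$, so the claim reduces to the pointwise equality $\Phi_h=\Gamma(w_1(\mathfrak{Ind}[h]))$. Using the naturality of both $w_1$ and of the Atiyah--J\"anich map with respect to $g:\mathbb{S}^1\to X$, this further reduces to the circle identity
\begin{equation*}
\sigma(\ell,\mathbb{S}^1)=\langle w_1(\mathfrak{Ind}[\ell]),[\mathbb{S}^1]\rangle_{\mathbb{Z}_2}\quad\text{for every }\ell\in\mathcal{C}(\mathbb{S}^1,\Phi_0(U)).
\end{equation*}
Since $U$ is of Kuiper type, $[\mathbb{S}^1,\Phi_0(U)]=\pi_1(\Phi_0(U))$, and by \eqref{PEUI} the parity defines a group isomorphism $\pi_1(\Phi_0(U))\xrightarrow{\sim}\mathbb{Z}_2$. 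By Theorem \ref{T10.1.111} the index map is an isomorphism $\pi_1(\Phi_0(U))\xrightarrow{\sim}\tilde{K}\mathcal{O}(\mathbb{S}^1)$, and $w_1:\tilde{K}\mathcal{O}(\mathbb{S}^1)\to H^1(\mathbb{S}^1,\mathbb{Z}_2)\simeq\mathbb{Z}_2$ is itself an isomorphism because the M\"obius line bundle generates $\tilde{K}\mathcal{O}(\mathbb{S}^1)$ and its first Stiefel--Whitney class generates $H^1(\mathbb{S}^1,\mathbb{Z}_2)$. Hence both sides of the displayed identity are group isomorphisms $\pi_1(\Phi_0(U))\to\mathbb{Z}_2$, and since $\mathbb{Z}_2$ admits only the identity as an automorphism, they must coincide.

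For part (b), I would invoke the classification of real line bundles, which asserts that on a paracompact base the first Stiefel--Whitney class induces a bijection between isomorphism classes of line bundles and $H^1(X,\mathbb{Z}_2)$; thus $\det\mathfrak{Ind}[h]$ is trivial if and only if $w_1(\det\mathfrak{Ind}[h])=0$. The classical identity $w_1(\det E)=w_1(E)$ for real vector bundles, together with the stability and additivity of $w_1$ under Whitney sum, gives the same equality for virtual bundles, so $w_1(\det\mathfrak{Ind}[h])=w_1(\mathfrak{Ind}[h])$. Combined with part (a) this yields (b), the ``moreover'' assertion, and the three-way equivalence.

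The principal obstacle is the circle identity displayed above. Although the abstract argument by ``unique automorphism of $\mathbb{Z}_2$'' is clean, it genuinely requires showing that both $\sigma$ and $w_1\circ\mathfrak{Ind}$ are non-trivial on $\pi_1(\Phi_0(U))$, and this ultimately rests on exhibiting a loop $\ell:\mathbb{S}^1\to\Phi_0(U)$ with $\sigma(\ell,\mathbb{S}^1)=-1$ whose Atiyah--J\"anich image is the class of the M\"obius bundle. The normalization axiom (N) of Theorem \ref{th4.3}, applied to an analytic path through a rank one projection and extended to a loop by an admissible homotopy, produces the required generator on the parity side, and a direct computation of $\mathfrak{Ind}$ on such a model loop provides the corresponding generator in $\tilde{K}\mathcal{O}(\mathbb{S}^1)$. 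Extending the resulting identity from $\pi_1(\Phi_0(U))$ to general $h:X\to\Phi_0(U)$ via naturality is then routine; this is essentially the content of Pejsachowicz's theorem \cite{JP}.
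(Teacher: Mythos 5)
The paper itself offers no proof of this theorem: it is presented explicitly as a compilation of known results of Pejsachowicz \cite{JP}, Wang \cite{W}, and Fitzpatrick--Pejsachowicz, adapted to the present notation. You are therefore not replicating an argument from the paper but supplying a proof where the paper only cites; that is worthwhile, but your sketch does not close the loop completely.

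Your treatment of part (a) is correct and cleanly organized: by naturality of $w_1$ and $\mathfrak{Ind}$ reduce to comparing two homomorphisms $[\mathbb{S}^1,\Phi_0(U)]\to\mathbb{Z}_2$, observe via \eqref{PEUI} and Theorem~\ref{T10.1.111} that both $\sigma(\cdot,\mathbb{S}^1)$ and $\langle w_1(\mathfrak{Ind}[\cdot]),[\mathbb{S}^1]\rangle$ are isomorphisms onto $\mathbb{Z}_2$, and conclude by the triviality of $\Aut(\mathbb{Z}_2)$. The ``moreover'' clause, obtained from the identity $w_1(\det E)=w_1(E)$ applied to the virtual bundle $\mathfrak{Ind}[h]=[\Ker h]-[\mathrm{coKer}\,h]$ together with the classification of real line bundles by $w_1$, is likewise sound.

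Part (b), however, is not established by your argument. What you actually prove is that $\det\mathfrak{Ind}[h]$ is trivial if and only if $w_1(\mathfrak{Ind}[h])=0$, which with part (a) gives the equivalence of $\mathfrak{F}$-orientability with triviality of the determinant line bundle. But statement (b) concerns $\mathfrak{B}$-orientability, the algebraic orientability notion of Benevieri and Furi, which nowhere enters your reasoning. Asserting that (b) ``combined with part (a)'' follows presupposes that $\mathfrak{F}$-orientability and $\mathfrak{B}$-orientability coincide, which is precisely the substantive equivalence the theorem is recording; your chain of implications becomes circular at this step. To repair it you would need Wang's theorem \cite{W}, which relates the Benevieri--Furi orientation atlas directly to a trivialization of $\det\mathfrak{Ind}[h]$, or else an independent comparison of the two orientation notions. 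Without such an ingredient, the ``therefore'' in the statement does not follow.
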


Therefore, there is a single notion of orientability for maps $X\to\Phi_{0}(U)$. Namely, a map $h:X\to \Phi_{0}(U)$ is orientable (in any sense, $\mathfrak{F}$ or $\mathfrak{B}$) if and only if $\mathfrak{Ind}[h]$ is orientable. Moreover, by Theorem \ref{th1.1}, the notion of orientability defined through the Stiefel--Whitney class and the determinant bundle do coincide.

\subsection{Intersection Morphism} In this subsection we study the obstruction detected by the first Stiefel--Whitney fundamental class and its relationship with spectral theory and the intersection index. For any given vector bundle $E$ over $X$,
we will show how to study its topological properties by using some techniques from spectral theory and algebraic geometry in the classifying space $\Phi_{0}(U)$. More precisely, given a vector bundle $E\to X$, the preimage of this bundle via the index map $\mathfrak{Ind}^{-1}(E): X\to\Phi_{0}(U)$ is a parametrized family of Fredholm operators of index zero. Thus, one should be able to reformulate the topological invariants of $E$ in terms of algebraic data using techniques of nonlinear spectral theory (as, e.g., the algebraic multiplicity,  the intersection index, or the parity). Adopting this methodology, we will relate the first Stiefel--Whitney class, $\omega_{1}$, with the concept of algebraic multiplicity and the underlying notion of intersection index. Subsequently, for notational simplicity, for every $\mathfrak{L}\in\mathcal{C}(\mathbb{S}^{1},\Phi_{0}(U))$, we will denote
\begin{equation*}
\chi_{2}[\mathfrak{L},\mathbb{S}^{1}] :=(-1)^{\chi}, \quad i_{2}(\mathfrak{L},\mathbb{S}^{1}):=(-1)^{\mathfrak{E}},
\end{equation*}
where
\begin{equation*}
\chi:=\sum_{\l_{0}\in\Sigma(\mathfrak{L}^{\omega})}\chi[\mathfrak{L}^{\omega},\l_{0}], \quad \mathfrak{E}:=\sum_{\l_{0}\in\Sigma(\mathfrak{L}^{\omega})}i(\mathcal{D}^{-1}(0),\l  I-\mathscr{L}_{\lambda_{0}},\l_0 I-\mathscr{L}_{\l_{0}}),
\end{equation*}
$\mathfrak{L}^{\o}\in\mathscr{H}(\mathbb{S}^{1},\Phi_{0}(U,))$ is any admissible analytic curve homotopic to $\mathfrak{L}$, and $\mathscr{L}_{\l_{0}}\equiv \mathscr{L}_{\l_{0}}[\mathfrak{L}^{\o}_{\mathbb{C}}]$ is the Schur reduction at $\l_0$ of the complexification, $\mathfrak{L}^{\o}_{\mathbb{C}}$, of $\mathfrak{L}^{\o}$. By \eqref{C} and Theorem \ref{th4.8}, it becomes apparent that
$$
  \chi_{2}[\mathfrak{L},\mathbb{S}^{1}]=i_{2}(\mathfrak{L},\mathbb{S}^{1})
$$
for every continuous path $\mathfrak{L}:\mathbb{S}^{1}\to\Phi_{0}(U)$.
\par
To illustrate how the topological properties are related to the spectral ones, we introduce the intersection morphism
\begin{equation*}
\mathfrak{I}:\tilde{K}\mathcal{O}(X)\longrightarrow \Hom(\pi_{1}(X),\mathbb{Z}_{2}),
\end{equation*}
where, for every $E \in \tilde{K}\mathcal{O}(X)$, the map $\mathfrak{I}[E]:\pi_{1}(X)\to \mathbb{Z}_{2}$ is given by
\begin{align*}
\mathfrak{I}[E]([\gamma]):=\chi_{2}[\mathfrak{Ind}^{-1}[E]\circ\gamma,\mathbb{S}^{1}]=i_{2}(\mathfrak{Ind}^{-1}[E]\circ \gamma,\mathbb{S}^{1}).
\end{align*}
The next result establishes that the morphism $\mathfrak{I}$ equals the first Stiefel--Whitney class morphism $w_{1}:\tilde{K}\mathcal{O}(X)\to H^{1}(X,\mathbb{Z}_{2})$ modulo the isomorphism $\Gamma:H^{1}(X,\mathbb{Z}_{2})\to \Hom(\pi_{1}(X),\mathbb{Z}_{2})$ defined in \eqref{UCT}. Since it fully describes the obstruction of the first Stiefel--Whitney class by means of the algebraic multiplicity and the intersection index, it can be considered as one of the main findings of this article.

\begin{theorem}
\label{th7.2}
Under the previous general assumptions, we have the equality
$$
  \mathfrak{I}= \Gamma\circ w_{1}, \quad \hbox{ as morphisms } \tilde{K}\mathcal{O}(X)\longrightarrow\Hom(\pi_{1}(X),\mathbb{Z}_{2}),
$$
where $\Gamma:H^{1}(X,\mathbb{Z}_{2})\to \Hom(\pi_{1}(X),\mathbb{Z}_{2})$ is the isomorphism defined by   \eqref{UCT}.
\end{theorem}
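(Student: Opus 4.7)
The plan is to fix $E \in \tilde{K}\mathcal{O}(X)$, set $h := \mathfrak{Ind}^{-1}[E]$, and evaluate both morphisms at a class $[\gamma] \in \pi_{1}(X)$ represented by a continuous map $g: \mathbb{S}^{1} \to X$. On one hand, by the definition of $\mathfrak{I}$ and Theorem \ref{th4.8}, under the identification $\{\pm 1\} \simeq \mathbb{Z}_{2}$,
\begin{equation*}
\mathfrak{I}[E]([\gamma]) \;=\; \chi_{2}[h \circ g, \mathbb{S}^{1}] \;=\; \sigma(h \circ g, \mathbb{S}^{1});
\end{equation*}
on the other, formula \eqref{UCT} yields
\begin{equation*}
[(\Gamma \circ w_{1})(E)]([\gamma]) \;=\; \langle g^{\ast}(w_{1}(E)), [\mathbb{S}^{1}] \rangle_{\mathbb{Z}_{2}} \;=\; \langle w_{1}(\mathfrak{Ind}[h \circ g]), [\mathbb{S}^{1}] \rangle_{\mathbb{Z}_{2}},
\end{equation*}
where the second equality invokes naturality of the Atiyah--J\"anich index, $g^{\ast}(E) = g^{\ast}(\mathfrak{Ind}[h]) = \mathfrak{Ind}[h \circ g]$, together with naturality of the first Stiefel--Whitney class. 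The theorem is therefore reduced to the case $X = \mathbb{S}^{1}$: for every $\mathfrak{L} \in \mathcal{C}(\mathbb{S}^{1}, \Phi_{0}(U))$,
\begin{equation*}
\sigma(\mathfrak{L}, \mathbb{S}^{1}) \;=\; \langle w_{1}(\mathfrak{Ind}[\mathfrak{L}]), [\mathbb{S}^{1}] \rangle_{\mathbb{Z}_{2}}.
\end{equation*}

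To settle this reduced identity, I would interpret both sides as group homomorphisms $[\mathbb{S}^{1}, \Phi_{0}(U)] = \pi_{1}(\Phi_{0}(U)) \to \mathbb{Z}_{2}$ and show each is an isomorphism of the two-element group. The left-hand side is precisely the map $\sigma$ displayed in \eqref{PEUI}, which is a group isomorphism because $U$ is of Kuiper type. The right-hand side factors as the composition
\begin{equation*}
\pi_{1}(\Phi_{0}(U)) \xrightarrow{\mathfrak{Ind}} \tilde{K}\mathcal{O}(\mathbb{S}^{1}) \xrightarrow{w_{1}} H^{1}(\mathbb{S}^{1}, \mathbb{Z}_{2}) \xrightarrow{\langle \cdot, [\mathbb{S}^{1}] \rangle_{\mathbb{Z}_{2}}} \mathbb{Z}_{2},
\end{equation*}
in which $\mathfrak{Ind}$ is an isomorphism by Theorem \ref{T10.1.111}, the classical computation gives $\tilde{K}\mathcal{O}(\mathbb{S}^{1}) \simeq \mathbb{Z}_{2}$ generated by the M\"obius line bundle (whose first Stiefel--Whitney class is the generator of $H^{1}(\mathbb{S}^{1}, \mathbb{Z}_{2})$), and the Kronecker pairing provides the canonical iso $H^{1}(\mathbb{S}^{1}, \mathbb{Z}_{2}) \simeq \mathbb{Z}_{2}$. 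As there is only one group isomorphism $\mathbb{Z}_{2} \to \mathbb{Z}_{2}$, both sides coincide.

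The main obstacle I anticipate is the structural bookkeeping required for the reduction step: verifying that $\mathfrak{I}[E]$ truly defines a group homomorphism on $\pi_{1}(X)$ (using homotopy invariance of the parity on closed curves together with the multiplicativity of $\sigma$ on concatenations, encoded in \eqref{PEUI}), and that the Kronecker pairing in \eqref{UCT} is compatible with $g^{\ast}$ under naturality of $w_{1}$ and $\mathfrak{Ind}$. An expedient alternative for the $\mathbb{S}^{1}$-case is to invoke Theorem \ref{th1.1} directly: it equates vanishing of $w_{1}(\mathfrak{Ind}[\mathfrak{L}])$ with $\mathfrak{F}$-orientability of $\mathfrak{L}$, which on $\mathbb{S}^{1}$ amounts to $\sigma(\mathfrak{L}, \mathbb{S}^{1}) = 1$; since both invariants take values in a two-element group, agreement on the kernel forces pointwise agreement everywhere.
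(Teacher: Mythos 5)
Your proof is correct and follows essentially the same route as the paper: both rely on the naturality of $w_{1}$ and $\mathfrak{Ind}$, on Theorem \ref{th4.8} to replace $\chi_{2}$ by $\sigma(\cdot,\mathbb{S}^{1})$, and on the computation $\tilde{K}\mathcal{O}(\mathbb{S}^{1})\simeq\mathbb{Z}_{2}$ (generated by the M\"obius bundle, with $w_{1}$ an isomorphism on $\tilde{K}\mathcal{O}(\mathbb{S}^{1})$ and $[\mathbb{S}^{1},GL(U)]=\Ker\mathfrak{Ind}$). The only organizational difference is that you reduce the identity to the case $X=\mathbb{S}^{1}$ and then invoke the uniqueness of the group isomorphism $\mathbb{Z}_{2}\to\mathbb{Z}_{2}$, whereas the paper keeps $X$ general, proves commutativity of the factorization diagram by showing the two $\mathbb{Z}_{2}$-valued homomorphisms $\Gamma(w_{1}(\mathfrak{Ind}[h]))$ and $\tilde{\sigma}[h]$ on $\pi_{1}(X)$ have equal kernels, and only identifies $\mathfrak{I}=\tilde{\sigma}\circ\mathfrak{Ind}^{-1}$ at the end---these amount to the same argument.
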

\begin{proof}
	By \cite[Pr. 2.7]{FP4}, we can factorize the Stiefel--Whitney morphism $w_{1}$ as
	\begin{center}
		\begin{tikzcd}
		\tilde{K}\mathcal{O}(X) \arrow{r}{w_{1}} & H^{1}(X,\mathbb{Z}_{2}) \arrow{d}{\Gamma} \\
		\left[X,\Phi_{0}(U)\right] \arrow{u}{\mathfrak{Ind}} \arrow{r}{\tilde{\sigma}} & \Hom(\pi_{1}(X),\mathbb{Z}_{2})
		\end{tikzcd}
	\end{center}
where the morphism $\tilde\sigma$ is defined by
\begin{equation*}
\tilde\sigma: [X,\Phi_{0}(U)]\longrightarrow \Hom(\pi_{1}(X),\mathbb{Z}_{2}), \quad \tilde{\sigma}[h]([\gamma]):=\sigma(h\circ \gamma, \mathbb{S}^{1}),\quad [\gamma]\in\pi_{1}(X).
\end{equation*}
To show the commutativity of the diagram, it suffices to prove that $$\Gamma(w_{1}(\mathfrak{Ind}[h]))=\tilde{\sigma}[h]$$ regarded as homomorphisms $\pi_{1}(X)\to\mathbb{Z}_{2}$.  Since they are $\mathbb{Z}_{2}$-valued group homomorphisms, that equality follows from the equality of their corresponding kernels.  As
$$
w_{1}:\tilde{K}\mathcal{O}(-)\longrightarrow H^{1}(-,\mathbb{Z}_{2}), \quad \mathfrak{Ind}:[-,\Phi_{0}(U)]\longrightarrow \tilde{K}\mathcal{O}(-),
$$
are natural transformations in the category $\mathfrak{Top}$ of topological spaces and continuous maps, it follows that for each continuous $g:\mathbb{S}^{1}\to X$,
\begin{equation}
\label{8.4.7}
g^{\ast}(w_{1}(\mathfrak{Ind}[h]))=w_{1}(g!(\mathfrak{Ind}[h]))=w_{1}(\mathfrak{Ind}[h\circ g]),
\end{equation}
where $g^{\ast}:H^{1}(X,\mathbb{Z}_{2})\to H^{1}(\mathbb{S}^{1},\mathbb{Z}_{2})$ and $g!:\tilde{K}\mathcal{O}(X)\to\tilde{K}\mathcal{O}(\mathbb{S}^{1})$ are the induced morphism by the cohomology functor $H^{1}(-,\mathbb{Z}_{2})$ and the $K$-theory functor $\tilde{K}\mathcal{O}(-)$, respectively. Pick a loop $[\gamma]\in\Ker[\Gamma(w_{1}(\mathfrak{Ind}[h]))]\subset\pi_{1}(X)$. Then if $g:\mathbb{S}^{1}\to X$ is a representation of the loop $[\gamma]$, by \eqref{UCT} and \eqref{8.4.7}, we can deduce that
\begin{equation*}
0=[\Gamma(w_{1}(\mathfrak{Ind}[h]))]([\gamma])=\langle g^{\ast}(w_{1}(\mathfrak{Ind}[h])),[\mathbb{S}^{1}]\rangle_{\mathbb{Z}_{2}}=\langle w_{1}(\mathfrak{Ind}[h\circ g]),[\mathbb{S}^{1}]\rangle_{\mathbb{Z}_{2}}.
\end{equation*}
As
$$
H_{1}(\mathbb{S}^{1},\mathbb{Z}_{2})\simeq \mathbb{Z}_{2}\simeq H^{1}(\mathbb{S}^{1},\mathbb{Z}_{2}),
$$
we find that $\langle w_{1}(\mathfrak{Ind}[h\circ g]),[\mathbb{S}^{1}]\rangle_{\mathbb{Z}_{2}}=0$ in $\mathbb{Z}_{2}$ if and only if $w_{1}(\mathfrak{Ind}[h\circ g])=0$ in $H^{1}(\mathbb{S}^{1},\mathbb{Z}_{2})$.  On the other hand, it is well known that
\begin{equation*}
\tilde{K}\mathcal{O}(\mathbb{S}^{1})=\{[T\mathbb{S}^{1}],[\mathcal{M}]\}\simeq \mathbb{Z}_{2},
\end{equation*}
where $T\mathbb{S}^{1}$ is the tangent bundle of $\mathbb{S}^{1}$ and $\mathcal{M}$ is the M\"{o}bius bundle. Since $\mathcal{M}$ is not orientable, a bundle over $\mathbb{S}^{1}$ is orientable if and only if it is trivial.  This implies that $w_{1}:\tilde{K}\mathcal{O}(\mathbb{S}^{1})\to H^{1}(\mathbb{S}^{1},\mathbb{Z}_{2})$ is an isomorphism. Consequently, $w_{1}(\mathfrak{Ind}[h\circ g])=0$ in $H^{1}(\mathbb{S}^{1},\mathbb{Z}_{2})$ if and only if $\mathfrak{Ind}[h\circ g]$ is the identity on $\tilde{K}\mathcal{O}(\mathbb{S}^{1})$. Since $[\mathbb{S}^{1},GL(U)]=\Ker \mathfrak{Ind}$, necessarily $h\circ g\in[\mathbb{S}^{1},GL(U)]$. Hence, it becomes apparent that $\sigma(h\circ g,\mathbb{S}^{1})=1$, i.e.,
\begin{equation*}
\tilde{\sigma}[h]([\gamma])=\sigma(h\circ g,\mathbb{S}^{1})=1.
\end{equation*}
Therefore, $[\gamma]\in\Ker[\tilde{\sigma}[h]]$. The converse is identical. Hence, we establish the identity
$$
\Ker[\Gamma(w_{1}(\mathfrak{Ind}[h]))]=\Ker[\tilde{\sigma}[h]], \quad h\in[X,\Phi_{0}(U)].
$$
Thus, $\Gamma(w_{1}(\mathfrak{Ind}[h]))=\tilde{\sigma}[h]$ for every $h\in[X,\Phi_{0}(U)]$ and the commutativity of the diagram holds.
Thus,  $\Gamma\circ w_{1}=\tilde\sigma\circ\mathfrak{Ind}^{-1}$. Observe that $\tilde\sigma\circ\mathfrak{Ind}^{-1}=\mathfrak{I}$ since for every  $E\in\tilde{K}\mathcal{O}(X)$ and $[\gamma]\in\pi_{1}(X)$, it follows from Theorem \ref{th4.8} that
\begin{equation*}
\mathfrak{I}[E]([\gamma])=\chi_{2}[\mathfrak{Ind}^{-1}[E]\circ\gamma,\mathbb{S}^{1}]=\sigma(\mathfrak{Ind}^{-1}[E]\circ \gamma,\mathbb{S}^{1})=\tilde{\sigma}[\mathfrak{Ind}^{-1}[E]]([\gamma]).
\end{equation*}
Therefore, $\mathfrak{I}=\Gamma\circ w_{1}$ as claimed. This ends the proof.
\end{proof}
As a consequence of the proof of Theorem \ref{th7.2} we obtain that the intersection morphism $\mathfrak{I}:\tilde{K}\mathcal{O}(X)\to \Hom(\pi_{1}(X),\mathbb{Z}_{2})$ factorizes the diagram
	\begin{center}
	\begin{tikzcd}
	\tilde{K}\mathcal{O}(X) \arrow{r}{w_{1}} \arrow{rd}{\mathfrak{I}} & H^{1}(X,\mathbb{Z}_{2}) \arrow{d}{\Gamma} \\
	\left[X,\Phi_{0}(U)\right] \arrow{u}{\mathfrak{Ind}} \arrow{r}{\tilde{\sigma}} & \Hom(\pi_{1}(X),\mathbb{Z}_{2})
	\end{tikzcd}
\end{center}
As the morphism $\mathfrak{I}$ describes the class $w_{1}$ in terms of the algebraic multiplicity and the intersection index of algebraic varieties, it establishes a hidden connection between the topological information of the vector bundle $E$ and the spectral properties of the underlying Fredholm paths.
\par
Moreover, since $\mathfrak{I}=\Gamma\circ w_{1}$, $\mathfrak{I}$ defines a topological invariant of stable equivalence classes of vector bundles. In other words, $E\neq F$ in $\tilde{K}\mathcal{O}(X)$ for any pair of vector bundles $E, F$ over $X$ with $\mathfrak{I}[E]\neq \mathfrak{I}[F]$. So, the next result holds.

\begin{theorem}
\label{th7.3}
	The intersection morphism $\mathfrak{I}:\tilde{K}\mathcal{O}(X)\rightarrow \Hom(\pi_{1}(X),\mathbb{Z}_{2})$ is a topological invariant of stable equivalence classes of real vector bundles with base $X$.
\end{theorem}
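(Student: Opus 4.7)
The plan is to derive this statement as a direct corollary of Theorem \ref{th7.2}, together with the classical fact that the first Stiefel--Whitney class is well defined on the reduced $K$-group. Since Theorem \ref{th7.2} identifies the intersection morphism with the composition $\Gamma \circ w_{1}$, the invariance properties of $\mathfrak{I}$ can be read off from those of its two constituent factors, and essentially no new analytic or topological input is required.

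First I would verify that the intersection morphism is genuinely defined on $\tilde{K}\mathcal{O}(X)$ rather than on the set of continuous maps $X\to\Phi_{0}(U)$. Under the admissibility assumption on $U$, Theorem \ref{T10.1.111} tells us that $\mathfrak{Ind}:[X,\Phi_{0}(U)]\to \tilde{K}\mathcal{O}(X)$ is a group isomorphism, so the preimage $\mathfrak{Ind}^{-1}[E]$ is a well-defined homotopy class. The quantity $\chi_{2}[\mathfrak{L}\circ\gamma,\mathbb{S}^{1}]$ only depends on the homotopy class of $\mathfrak{L}\circ\gamma$, by Theorem \ref{th4.8} and the homotopy invariance of parity recalled in Section 4; it likewise depends only on the free homotopy class of $\gamma$. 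Hence $\mathfrak{I}[E]([\gamma])$ is unambiguous.

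Next I would invoke Theorem \ref{th7.2} to write $\mathfrak{I} = \Gamma\circ w_{1}$, and recall that $w_{1}$ is a homomorphism $\Vect(X)\to H^{1}(X,\mathbb{Z}_{2})$ with respect to Whitney sums that vanishes on trivial bundles, hence descends canonically to a group homomorphism $w_{1}:\tilde{K}\mathcal{O}(X)\to H^{1}(X,\mathbb{Z}_{2})$ on stable equivalence classes. Combined with the isomorphism $\Gamma$ from \eqref{UCT}, this shows that if $E$ and $F$ are stably equivalent real vector bundles over $X$, then $[E]=[F]$ in $\tilde{K}\mathcal{O}(X)$, hence $w_{1}(E)=w_{1}(F)$, and therefore $\mathfrak{I}[E]=\Gamma(w_{1}(E))=\Gamma(w_{1}(F))=\mathfrak{I}[F]$ in $\Hom(\pi_{1}(X),\mathbb{Z}_{2})$. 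Taking the contrapositive yields the desired topological invariance: whenever $\mathfrak{I}[E]\neq \mathfrak{I}[F]$, the bundles $E$ and $F$ cannot lie in the same stable equivalence class.

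There is no serious obstacle in the proof, as all the substantive content has already been packaged into Theorem \ref{th7.2} (the identification with the Stiefel--Whitney morphism via the Atiyah--J\"anich isomorphism and the universal coefficient theorem) and into the standard functorial properties of $w_{1}$. The only step that needs to be explicitly flagged, as discussed above, is the well-posedness of $\mathfrak{I}$ on $\tilde{K}\mathcal{O}(X)$; once that has been noted, the theorem reduces to a one-line deduction from the factorization $\mathfrak{I} = \Gamma\circ w_{1}$.
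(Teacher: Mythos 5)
Your proposal is correct and follows exactly the route the paper takes: the paper deduces Theorem \ref{th7.3} in a single line from the factorization $\mathfrak{I}=\Gamma\circ w_{1}$ established in Theorem \ref{th7.2}, using that $w_{1}$ is defined on $\tilde{K}\mathcal{O}(X)$. Your additional remarks on well-posedness of $\mathfrak{I}$ (via the Atiyah--J\"anich isomorphism and the homotopy invariance of $\chi_{2}$) are a harmless but sound elaboration of what the paper leaves implicit.
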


Naturally, thanks to Theorem \ref{th7.2}, the orientability of a given vector bundle
$E\to X$ can be characterized in terms of the intersection index. Precisely, the next result holds.

\begin{corollary}
	\label{C7.1.3}
	Let $E\to X$ be a real vector bundle over $X$. Then,  $E$ is orientable if and only if,
	for every closed curve $[\gamma]\in\pi_{1}(X)$,
	\begin{equation*}
	\chi_{2}[\mathfrak{Ind}^{-1}[E]\circ\gamma,\mathbb{S}^{1}]=
	i_{2}(\mathfrak{Ind}^{-1}[E]\circ\gamma,\mathbb{S}^{1})=1,
	\end{equation*}
	i.e., $E$ is orientable if and only if $\mathfrak{I}[E]\equiv 1$, where $1$ is the identity on $\Hom(\pi_{1}(X),\mathbb{Z}_{2})$.
\end{corollary}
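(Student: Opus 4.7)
The plan is to deduce this corollary directly from Theorem \ref{th7.2} together with the definition of orientability via the first Stiefel--Whitney class. The key observation is that Theorem \ref{th7.2} has already translated $w_1$ into the intersection morphism $\mathfrak{I}$ through the isomorphism $\Gamma$; all that remains is to chase the definitions of orientability and of the trivial element in $\Hom(\pi_{1}(X),\mathbb{Z}_{2})$.

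First, I would recall the characterization of orientability: a real vector bundle $E \to X$ (and, consequently, its stable equivalence class in $\tilde{K}\mathcal{O}(X)$) is orientable if and only if $w_{1}(E)=0$ in $H^{1}(X,\mathbb{Z}_{2})$. Next, I would invoke Theorem \ref{th7.2} to write $\mathfrak{I}[E]=\Gamma(w_{1}(E))$ in $\Hom(\pi_{1}(X),\mathbb{Z}_{2})$. Since $\Gamma$ is a group isomorphism, it sends the zero element of $H^{1}(X,\mathbb{Z}_{2})$ to the trivial homomorphism $\pi_{1}(X)\to\mathbb{Z}_{2}$, which, under the multiplicative convention built into $\chi_{2}$ and $i_{2}$, is the constant map with value $1$. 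Therefore,
\begin{equation*}
E \text{ orientable} \;\;\Longleftrightarrow\;\; w_{1}(E)=0 \;\;\Longleftrightarrow\;\; \Gamma(w_{1}(E))\equiv 1 \;\;\Longleftrightarrow\;\; \mathfrak{I}[E]\equiv 1.
\end{equation*}

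Finally, I would unwind the definition of $\mathfrak{I}[E]$: for every $[\gamma]\in\pi_{1}(X)$,
\begin{equation*}
\mathfrak{I}[E]([\gamma])=\chi_{2}[\mathfrak{Ind}^{-1}[E]\circ\gamma,\mathbb{S}^{1}]=i_{2}(\mathfrak{Ind}^{-1}[E]\circ\gamma,\mathbb{S}^{1}),
\end{equation*}
the second equality being the one already observed via identity \eqref{C} and Theorem \ref{th4.8}. Substituting this expression for $\mathfrak{I}[E]$ into the chain of equivalences above yields the statement of the corollary. There is no serious obstacle here; the whole content has been absorbed into Theorem \ref{th7.2}, and the corollary is really just a translation of the condition $w_{1}(E)=0$ into the spectral language of $\chi_{2}$ and $i_{2}$ through the isomorphism $\Gamma$.
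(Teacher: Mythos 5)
Your proposal is correct and follows essentially the same argument as the paper: cite the standard criterion that $E$ is orientable iff $w_{1}(E)=0$, then use the factorization $\mathfrak{I}=\Gamma\circ w_{1}$ from Theorem~\ref{th7.2} together with the fact that $\Gamma$ is an isomorphism sending $0$ to the trivial homomorphism. The extra unwinding of the definition of $\mathfrak{I}[E]$ at the end is implicit in the paper's version but adds nothing essentially new.
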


\begin{proof}
	The vector bundle $E\to X$ is orientable if and only $w_{1}(E)=0$ in $H^{1}(X,\mathbb{Z}_{2})$. Thus, since $\mathfrak{I}=\Gamma\circ w_{1}$ and $\Gamma$ is an isomorphism, $E$ is orientable if and only if $\mathfrak{I}[E]\equiv 1$.
\end{proof}

The main interest of the previous results relies on the fact that, being $w_{1}[E]$ difficult to compute in general, the intersection morphism can be easily computed in many particular examples as it will become apparent in the next sections. Another important consequence of establishing these connections  is the fact that the real line bundles can be completely described through their spectral properties. Precisely, the next result holds. In the sequel we denote by $\Vect_{1}(X)$ the set of isomorphism classes of line bundles over $X$. In particular $\Vect_{1}(X)$ is a group with the tensor product $\otimes$ of line bundles.
\begin{theorem}
	\label{th5.5}
	The restricted intersection morphism
	\begin{equation*}
	\mathfrak{I}:\Vect_{1}(X)\longrightarrow \Hom(\pi_{1}(X),\mathbb{Z}_{2})
	\end{equation*}
	is an isomorphism. In other words, two line bundles $L, L'\in \Vect_{1}(X)$ are isomorphic if and only if
	\begin{equation*}
	\chi_{2}[\mathfrak{Ind}^{-1}[L]\circ\gamma,\mathbb{S}^{1}]=\chi_{2}[\mathfrak{Ind}^{-1}[L']\circ\gamma,\mathbb{S}^{1}]
	\end{equation*}
	for every $[\gamma]\in\pi_{1}(X)$.
\end{theorem}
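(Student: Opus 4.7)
The plan is to deduce the theorem from Theorem \ref{th7.2} combined with the classical classification of real line bundles by the first Stiefel--Whitney class. By Theorem \ref{th7.2}, on all of $\tilde{K}\mathcal{O}(X)$ one has $\mathfrak{I} = \Gamma \circ w_{1}$; precomposing with the natural map $\Vect_{1}(X) \to \tilde{K}\mathcal{O}(X)$ that sends a line bundle to its stable equivalence class therefore gives
\begin{equation*}
\mathfrak{I}|_{\Vect_{1}(X)} = \Gamma \circ \bigl(w_{1}|_{\Vect_{1}(X)}\bigr).
\end{equation*}
Since $\Gamma : H^{1}(X,\mathbb{Z}_{2}) \to \Hom(\pi_{1}(X),\mathbb{Z}_{2})$ is already known to be an isomorphism (the universal coefficient identification recorded just before \eqref{UCT}), the whole theorem reduces to establishing that $w_{1}: \Vect_{1}(X) \to H^{1}(X,\mathbb{Z}_{2})$ is itself an isomorphism of abelian groups, with tensor product on the source and addition on the target.

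For this last step I would simply invoke the classical fact that real line bundles over a paracompact space (in particular, over a compact path-connected $X$) are classified by homotopy classes of maps into the classifying space $BO(1) = \mathbb{RP}^{\infty}$, which happens to be an Eilenberg--MacLane space of type $K(\mathbb{Z}_{2},1)$. This supplies a natural chain of bijections
\begin{equation*}
\Vect_{1}(X) \longleftrightarrow [X,\mathbb{RP}^{\infty}] \longleftrightarrow [X,K(\mathbb{Z}_{2},1)] \longleftrightarrow H^{1}(X,\mathbb{Z}_{2}),
\end{equation*}
which is well known to be realized by $w_{1}$ applied to the universal line bundle (see, for instance, Husemoller \cite{HM}). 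Compatibility of group operations is then the identity $w_{1}(L\otimes L')=w_{1}(L)+w_{1}(L')$, which is immediate from the Whitney product formula for line bundles; this is also consistent with the additive structure on $\tilde{K}\mathcal{O}(X)$, since for line bundles the degree one piece of $w(L\oplus L')$ and $w(L\otimes L')$ coincide.

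The only real obstacle is a minor bookkeeping point: one must check that the restriction of $\mathfrak{I}$, originally defined on stable classes, is well-posed on $\Vect_{1}(X)$ and that the map $\Vect_{1}(X)\to \tilde{K}\mathcal{O}(X)$ is injective. Both hold because, in the line bundle range, $w_{1}$ is the sole obstruction and two stably equivalent real line bundles are necessarily already isomorphic (their classifying maps into $\mathbb{RP}^{\infty}$ are determined by $w_{1}$ alone). Chaining the two isomorphisms $w_{1}|_{\Vect_{1}(X)}$ and $\Gamma$ then yields the theorem; the reformulation ``$L \simeq L'$ if and only if $\chi_{2}[\mathfrak{Ind}^{-1}[L]\circ \gamma, \mathbb{S}^{1}] = \chi_{2}[\mathfrak{Ind}^{-1}[L']\circ \gamma, \mathbb{S}^{1}]$ for every $[\gamma]\in\pi_{1}(X)$'' is just the injectivity half of the resulting group isomorphism.
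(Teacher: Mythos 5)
Your proposal is correct and follows essentially the same route as the paper: factor $\mathfrak{I}$ as $\Gamma\circ w_{1}$, cite the isomorphism $w_{1}:\Vect_{1}(X)\to H^{1}(X,\mathbb{Z}_{2})$, and verify well-posedness by noting that stably equivalent line bundles must already be isomorphic because $w_{1}$ alone classifies them. The only cosmetic difference is that you spell out the classifying-space argument for why $w_{1}$ classifies line bundles, whereas the paper simply quotes that fact.
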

\begin{proof}
	Let us start by proving that the restricted morphism $\mathfrak{I}$ is well defined. We need to prove that the isomorphism classes of the line bundles coincide with the stable equivalence ones. If $L, L'\in \Vect_{1}(X)$ are isomorphic, they are clearly stably isomorphic. Let us suppose that $L, L'\in \Vect_{1}(X)$ are stably isomorphic. Then, by the properties of the Stiefel--Whitney class, $\omega_{1}(L)=\omega_{1}(L')$. Since 	$\omega_{1}:\Vect_{1}(X)\to H^{1}(X,\mathbb{Z}_{2})$
	is an isomorphism, necessarily $\omega_{1}(L)=\omega_{1}(L')$ implies that the line bundles $L, L'$ are isomorphic. This proves our claim. On the other hand, we have that
	\begin{equation*}
	\mathfrak{I}=\Gamma\circ \omega_{1}, \quad \text{ as maps } \  \Vect_{1}(X)\longrightarrow \Hom(\pi_{1}(X),\mathbb{Z}_{2}).
	\end{equation*}
	Since both, $\Gamma$ and $\omega_{1}:\Vect_{1}(X)\to H^{1}(X,\mathbb{Z}_{2})$, are isomorphisms, it follows that $\mathfrak{I}:\Vect_{1}(X)\to \Hom(\pi_{1}(X),\mathbb{Z}_{2})$ is an isomorphism. This ends the proof.
\end{proof}

As shown by the following result, in certain cases, the intersection morphism can classify also every stable equivalence class of real vector bundles.

\begin{theorem}
	If all orientable vector bundles are stably trivial or, equivalently, if all orientable maps $X\to\Phi_{0}(U)$ are homotopic, then the intersection morphism
	\begin{equation*}
	\mathfrak{I}:\tilde{K}\mathcal{O}(X)\longrightarrow \Hom(\pi_{1}(X),\mathbb{Z}_{2})
	\end{equation*}
	is an isomorphism. In other words, the intersection morphism $\mathfrak{I}$ classifies all stable equivalence classes of vector bundles, i.e., $E, F\in \Vect(X)$ are stably equivalent if and only if
	\begin{equation*}	
	\chi_{2}[\mathfrak{Ind}^{-1}[E]\circ\gamma,\mathbb{S}^{1}]=
	\chi_{2}[\mathfrak{Ind}^{-1}[F]\circ\gamma,\mathbb{S}^{1}]\quad \hbox{for all} \;\; [\gamma]\in\pi_{1}(X).
	\end{equation*}
\end{theorem}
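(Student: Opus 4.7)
The plan is to reduce the claim to a statement about the Stiefel--Whitney morphism. By Theorem \ref{th7.2} we have the factorization $\mathfrak{I}=\Gamma\circ w_{1}$ with $\Gamma:H^{1}(X,\mathbb{Z}_{2})\to \Hom(\pi_{1}(X),\mathbb{Z}_{2})$ an isomorphism, so $\mathfrak{I}$ is an isomorphism if and only if
$$
w_{1}:\tilde{K}\mc{O}(X)\longrightarrow H^{1}(X,\mathbb{Z}_{2})
$$
is. Note also that the equivalence stated in the hypothesis is immediate: by Theorem \ref{T10.1.111} the Atiyah--J\"anich morphism $\mathfrak{Ind}$ is a group isomorphism, and by Theorem \ref{th1.1} a map $h:X\to \Phi_{0}(U)$ is orientable exactly when the class $\mathfrak{Ind}[h]$ is orientable; two orientable maps are homotopic if and only if their indices agree in $\tilde{K}\mc{O}(X)$, which holds universally if and only if every orientable class is trivial. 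I will therefore freely use either form of the hypothesis.

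For injectivity, let $E\in\tilde{K}\mc{O}(X)$ satisfy $\mathfrak{I}[E]\equiv 1$. By Corollary \ref{C7.1.3} this is equivalent to $E$ being orientable in the sense of the preceding subsection. Since $X$ is compact and path connected, every element of the reduced group is represented by a genuine vector bundle: a formal difference $[V]-[W]$ can be turned into $[V\oplus W']$ after choosing $W'$ with $W\oplus W'\simeq \underline{\mathbb{R}}^{n}$, and trivial bundles vanish in $\tilde{K}\mc{O}(X)$. As $w_{1}$ is a stable invariant, this representative is an orientable vector bundle, so the hypothesis forces it to be stably trivial, and hence $E=0$ in $\tilde{K}\mc{O}(X)$.

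For surjectivity, I would appeal directly to Theorem \ref{th5.5}, which has already been established and states that $\mathfrak{I}\big|_{\Vect_{1}(X)}:\Vect_{1}(X)\to\Hom(\pi_{1}(X),\mathbb{Z}_{2})$ is an isomorphism. The inclusion sending a line bundle $L$ to its stable class in $\tilde{K}\mc{O}(X)$ commutes with $\mathfrak{I}$ thanks to the naturality of $w_{1}$, so any $\varphi\in\Hom(\pi_{1}(X),\mathbb{Z}_{2})$ is realized as $\mathfrak{I}(L)$ for some line bundle $L$, and a fortiori as $\mathfrak{I}[E]$ with $E=[L]\in \tilde{K}\mc{O}(X)$. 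Combining the two steps, $\mathfrak{I}$ is bijective, which proves the theorem and in particular yields the stated classification of stable equivalence classes through the algebraic multiplicity and intersection index. The main conceptual point, and the only place where ingenuity is required, is recognizing the asymmetric roles of the two ingredients: Theorem \ref{th5.5} is responsible for surjectivity unconditionally, while the additional hypothesis is what is needed to kill the kernel and is equivalent to $w_{1}$ being injective; after that, the argument is essentially bookkeeping.
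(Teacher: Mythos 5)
Your proof is correct and follows essentially the same route as the paper: reduce to showing $w_{1}$ is an isomorphism via the factorization $\mathfrak{I}=\Gamma\circ w_{1}$, obtain injectivity from the hypothesis (every orientable class is stably trivial, so $\Ker w_{1}=0$), and obtain surjectivity from the line-bundle case (Theorem \ref{th5.5}, which rests on $\omega_{1}\big|_{\Vect_{1}(X)}$ being an isomorphism). The extra explanations you supply—why every class in $\tilde{K}\mathcal{O}(X)$ is represented by an actual bundle, and why the two stated hypotheses are equivalent—are sound elaborations of points the paper leaves implicit, but they do not change the structure of the argument.
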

\begin{proof}
	Since all orientable vector bundles are stably trivial, it follows that $\Ker[w_{1}]$ is the identity on $\tilde{K}\mathcal{O}(X)$. Hence $w_{1}:\tilde{K}\mathcal{O}(X)\to H^{1}(X,\mathbb{Z}_{2})$ is injective. It is also surjective since $\omega_{1}:\Vect_{1}(X)\to H^{1}(X,\mathbb{Z}_{2})$ is an isomorphism. Hence $w_{1}$ is an isomorphism. The proof concludes noticing that $\mathfrak{I}=\Gamma\circ w_{1}$.
\end{proof}
\par
Roughly spoken, the Stiefel--Whitney class (or, equivalently, the intersection morphism) $\mathfrak{I}[E]\equiv(\Gamma\circ w_{1})[E]:\pi_{1}(X)\to\mathbb{Z}_{2}$ measures how the vector bundle $E\to X$ twists along a given loop of the base space $[\gamma]\in\pi_{1}(X)$. In the following section, we will introduce a new topological invariant of vector bundles that will encode all the values of $\mathfrak{I}[E]:\pi_{1}(X)\to\mathbb{Z}_{2}$ in a sort of generalized analogue of the arithmetical mean giving rise to a global measure of the torsion of $E$. This invariant is far more comfortable to work with than with $w_{1}$, a it provides us with a real number instead of a cohomology class.

\section{The global torsion invariant}

This section introduces a new topological invariant of stable equivalence classes of real vector bundles that encodes the information given by the first Stiefel--Whitney class, $w_1$. Besides characterizing the orientability of a vector bundle, $w_{1}$ also gives some useful information on non orientable bundles which can be invoked to classify them. This information is actually encoded in the values of the map $\mathfrak{I}[E]:\pi_{1}(X)\to \mathbb{Z}_{2}$. The basic idea consists in summing
up the values of this map.
\par
For any given closed smooth manifold $M$, there exists a Riemannian metric $g$ defined on $M$  for which $(M,g)$ becomes a Riemannian manifold. In the sequel, we fix this metric $g$ and a base-point $\mathbf{x}\in M$. Then, we define the global torsion invariant $\L:\tilde{K}\mathcal{O}(M)\longrightarrow [-1,1]$ by
\begin{equation*}
\L(E):= \int_{\mathcal{L}_{\mathbf{x}}(M)}\mathfrak{I}[E]([\gamma])\ d\mu_{\mathbf{x}}(\gamma),
\end{equation*}
where $\mathcal{L}_{\mathbf{x}}(M)$ stands for the loop space of $M$, i.e., the space of continuous loops $\gamma:\mathbb{S}^{1}\to M$ with base-point $\mathbf{x}$, $\gamma(0)=\mathbf{x}$, and $\mu_{\mathbf{x}}:\mathcal{B}_{\mathbf{x}}\to [0,+\infty]$ is the normalised Wiener measure on $\mathcal{L}_{\mathbf{x}}(M)$; $\mathcal{B}_{\mathbf{x}}$ denotes the Borel $\sigma$-algebra of $\mathcal{L}_{\mathbf{x}}(M)$ under the topology of the uniform convergence. See Appendix A for the definition of the measure $\mu_{\mathbf{x}}$.
\par
The aim of $\L$ is packaging the information provided by the class $w_{1}$,  in a robust and compact way, summing up over all its possible values. Thanks to Theorem \ref{th7.2}, $\L$
is easily computable in a number of cases. The next result will be very useful for these computations.

\begin{theorem}
	\label{T7.2.1}
	For every $E\in\tilde{K}\mathcal{O}(M)$, $\mathfrak{I}[E]([\cdot])\in L^{1}(\mathcal{L}_{\mathbf{x}}(M),\mu_{\mathbf{x}})$ and
	\begin{equation}
	\label{7.7.2}
	\int_{\mathcal{L}_{\mathbf{x}}(M)}\mathfrak{I}[E]([\gamma]) \ d\mu_{\mathbf{x}}(\gamma)=\sum_{[\eta]\in\pi_{1}(M)}\chi_{2}[\mathfrak{Ind}^{-1}[E]\circ \eta,\mathbb{S}^{1}] \cdot \mu_{\mathbf{x}}([\eta]).
	\end{equation}
\end{theorem}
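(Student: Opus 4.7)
The plan is to prove the formula in two steps: first verify integrability and set up the appropriate decomposition of the loop space, then apply countable additivity of the Wiener integral.

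First I would observe that the integrand $\gamma \mapsto \mathfrak{I}[E]([\gamma])$ takes values in $\{-1,+1\}$ (the paper identifies $\mathbb{Z}_2 \simeq \{-1,+1\}$ via $\chi_2$), so it is uniformly bounded by $1$. Since $\mu_{\mathbf{x}}$ is a normalised (hence finite) measure, integrability in $L^{1}(\mathcal{L}_{\mathbf{x}}(M),\mu_{\mathbf{x}})$ reduces to measurability. Because $\mathfrak{I}[E]$ is a homomorphism $\pi_{1}(M,\mathbf{x}) \to \mathbb{Z}_{2}$, the function $\gamma \mapsto \mathfrak{I}[E]([\gamma])$ is constant on each path-component of $\mathcal{L}_{\mathbf{x}}(M)$. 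Thus measurability follows once we show that every path-component $[\eta]\subset \mathcal{L}_{\mathbf{x}}(M)$ is a Borel set.

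Next I would establish the decomposition
\begin{equation*}
\mathcal{L}_{\mathbf{x}}(M)=\bigsqcup_{[\eta]\in\pi_{1}(M,\mathbf{x})}[\eta].
\end{equation*}
The indexing set is countable because $M$ is a closed smooth manifold, so $\pi_{1}(M,\mathbf{x})$ is finitely presented and therefore at most countable. As the uniform topology on the loop space is locally path-connected (a standard concatenation/straight-line homotopy argument in normal coordinates shows that every small $\mathcal{C}^0$-ball around $\eta$ is path-connected in $\mathcal{L}_{\mathbf{x}}(M)$), the path-components coincide with the connected components and are simultaneously open and closed. In particular each $[\eta]$ is a Borel set, and the sum above is a countable disjoint Borel partition.

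Finally, I would apply countable additivity of the integral for the finite measure $\mu_{\mathbf{x}}$ to conclude
\begin{equation*}
\int_{\mathcal{L}_{\mathbf{x}}(M)}\mathfrak{I}[E]([\gamma])\,d\mu_{\mathbf{x}}(\gamma)=\sum_{[\eta]\in\pi_{1}(M)}\int_{[\eta]}\mathfrak{I}[E]([\gamma])\,d\mu_{\mathbf{x}}(\gamma).
\end{equation*}
Since $\mathfrak{I}[E]$ is constant on each class $[\eta]$, with value $\mathfrak{I}[E]([\eta])=\chi_{2}[\mathfrak{Ind}^{-1}[E]\circ\eta,\mathbb{S}^{1}]$ by definition of the intersection morphism, each summand equals $\chi_{2}[\mathfrak{Ind}^{-1}[E]\circ\eta,\mathbb{S}^{1}]\cdot\mu_{\mathbf{x}}([\eta])$, which yields \eqref{7.7.2}. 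The dominated convergence argument needed to exchange the integral and the countable sum is trivial here because the integrand is bounded by $1$ and $\mu_{\mathbf{x}}(\mathcal{L}_{\mathbf{x}}(M))=1$.

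The main conceptual point is the local path-connectedness of the loop space in the uniform topology; once that is in hand, the identification of path-components with homotopy classes and the countability of $\pi_{1}(M)$ for a closed manifold make the Borel-measurability and the interchange of sum and integral routine. There is no analytical delicacy coming from the Wiener measure itself beyond its being a finite Borel measure on the space of continuous loops, a property guaranteed by the construction recalled in Appendix A.
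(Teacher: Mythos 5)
Your proof follows essentially the same route as the paper's: decompose $\mathcal{L}_{\mathbf{x}}(M)$ into the countably many homotopy classes, observe that $\mathfrak{I}[E]$ is constant and $\pm 1$-valued on each, and interchange sum and integral by dominated convergence for the probability measure $\mu_{\mathbf{x}}$. The only cosmetic differences are that the paper cites Lee's theorem for countability of $\pi_1$ of a topological manifold (rather than your finite-presentation argument) and simply asserts $[\eta]\in\mathcal{B}_{\mathbf{x}}$ without the local-path-connectedness justification you supply, which is a reasonable piece of extra detail but does not change the argument.
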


\begin{proof}
	Since $M$ is a real topological manifold, by Lee \cite[Th. 1.16]{Lee2}, $\pi_{1}(M)$ is countable.
	Thus, there exists a sequence of loops, $\eta_{n}:\mathbb{S}^{1}\to M$, $n\in\Z$, possibly finite, such that
	$$
	\pi_{1}(M)=\biguplus_{n\in\mathbb{Z}}[\eta_{n}],
	$$
	where  $\uplus$ denotes the disjoint union. Since $\mathfrak{I}[E]$ is a map  $\pi_{1}(M)\to \mathbb{Z}_{2}$, it is constant on each homotopy class. Thus, we can rewrite
	\begin{equation*}
	\mathfrak{I}[E]([\gamma])=\sum_{n\in\mathbb{Z}}\mathfrak{I}[E]([\eta_{n}]) \cdot \mathbf{1}_{[\eta_{n}]}(\gamma), \quad \gamma\in\mathcal{L}_{\mathbf{x}}(M),
	\end{equation*}
	where $ \mathbf{1}_{[\eta_{n}]}(\gamma)=1$ if $\g\in [\eta_n]$, and
	$ \mathbf{1}_{[\eta_{n}]}(\gamma)=0$ if not. Hence, it becomes apparent that
	\begin{equation*}
	\mathfrak{I}[E]([\gamma])=\sum_{n\in\mathbb{Z}}\chi_{2}[\mathfrak{Ind}^{-1}[E]\circ\eta_{n},\mathbb{S}^{1}] \cdot \mathbf{1}_{[\eta_{n}]}(\gamma).
	\end{equation*}
	As $\mathfrak{I}[E]([\cdot])$ is the pointwise limit of the simple functions defined by
	\begin{equation*}
	f_{m}(\gamma):=\sum_{n=-m}^{m}\chi_{2}[\mathfrak{Ind}^{-1}[E]\circ\eta_{n},\mathbb{S}^{1}] \cdot \mathbf{1}_{[\eta_{n}]}(\gamma)
	\end{equation*}
	and $[\eta_{n}]\in\mathcal{B}_{\mathbf{x}}$, $\mathfrak{I}[E]([\cdot]):\mathcal{L}_{\mathbf{x}}(M)\to\mathbb{R}$ is measurable. Moreover, since $|\mathfrak{I}[E]([\gamma])|=1$ for all $\gamma\in\mathcal{L}_{\mathbf{x}}(M)$, we have that
	\begin{equation*}
	\int_{\mathcal{L}_{\mathbf{x}}(M)}|\mathfrak{I}[E]([\gamma])|\ d\mu_{\mathbf{x}}(\gamma)=1
	\end{equation*}
	and hence $\mathfrak{I}[E]([\cdot])$ is integrable, i.e., $\mathfrak{I}[E]([\cdot])\in L^{1}(\mathcal{L}_{\mathbf{x}}(M),\mu_{\mathbf{x}})$. Finally, by the dominated convergence theorem,
	we find that
	\begin{align*}
	\int_{\mathcal{L}_{\mathbf{x}}(M)} \mathfrak{I}[E]([\gamma]) \ d\mu_{\mathbf{x}}(\gamma) & =\lim_{m\to\infty}\int_{\mathcal{L}_{\mathbf{x}}(M)}f_{m}(\gamma) \ d\mu_{\mathbf{x}}(\gamma) \\
	&=\lim_{m\to\infty}\sum_{n=-m}^{m}\chi_{2}[\mathfrak{Ind}^{-1}[E]\circ\eta_{n},\mathbb{S}^{1}]\cdot \mu_{\mathbf{x}}([\eta_{n}]) \\
	& =\sum_{[\eta]\in\pi_{1}(M)}\chi_{2}[\mathfrak{Ind}^{-1}[E]\circ\eta,\mathbb{S}^{1}] \cdot \mu_{\mathbf{x}}([\eta]).
	\end{align*}
	This shows \eqref{7.7.2} and ends the proof.
\end{proof}

The next result provides us with a useful criterium, in terms of $\L$, to ascertain whether, or not, a vector bundle $E\to M$ is orientable.

\begin{theorem}\label{T7.2.2}
	A vector bundle $E\to M$ is orientable if and only if
	\begin{equation}
	\label{7.2}
	\L(E)\equiv \int_{\mathcal{L}_{\mathbf{x}}(M)}\mathfrak{I}[E]([\gamma]) \ d\mu_{\mathbf{x}}(\gamma)=1.
	\end{equation}
\end{theorem}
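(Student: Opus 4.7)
The plan is to combine the orientability criterion of Corollary \ref{C7.1.3} with the series expansion for $\Lambda(E)$ provided by Theorem \ref{T7.2.1}, and then use the fact that $\mu_{\mathbf{x}}$ is a probability measure giving positive weight to each homotopy class.

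For the forward implication, if $E\to M$ is orientable then by Corollary \ref{C7.1.3} we have $\mathfrak{I}[E]([\gamma])=1$ for every $[\gamma]\in\pi_{1}(M)$. Since $\mu_{\mathbf{x}}$ is a normalized (probability) measure on $\mathcal{L}_{\mathbf{x}}(M)$, Theorem \ref{T7.2.1} yields
\begin{equation*}
\Lambda(E)=\sum_{[\eta]\in\pi_{1}(M)}1\cdot \mu_{\mathbf{x}}([\eta])=\mu_{\mathbf{x}}(\mathcal{L}_{\mathbf{x}}(M))=1.
\end{equation*}

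For the converse, assume $\Lambda(E)=1$. By Theorem \ref{T7.2.1},
\begin{equation*}
1=\sum_{[\eta]\in\pi_{1}(M)}\chi_{2}[\mathfrak{Ind}^{-1}[E]\circ\eta,\mathbb{S}^{1}]\cdot \mu_{\mathbf{x}}([\eta]),
\end{equation*}
where each coefficient $\chi_{2}[\mathfrak{Ind}^{-1}[E]\circ\eta,\mathbb{S}^{1}]\in\{-1,+1\}$ and $\sum_{[\eta]}\mu_{\mathbf{x}}([\eta])=1$. Writing the set of homotopy classes as a disjoint union of $\mathcal{P}^{+}:=\{[\eta]:\chi_{2}=+1\}$ and $\mathcal{P}^{-}:=\{[\eta]:\chi_{2}=-1\}$, the identity above becomes $\mu_{\mathbf{x}}(\mathcal{P}^{+})-\mu_{\mathbf{x}}(\mathcal{P}^{-})=1$, while $\mu_{\mathbf{x}}(\mathcal{P}^{+})+\mu_{\mathbf{x}}(\mathcal{P}^{-})=1$, which forces $\mu_{\mathbf{x}}(\mathcal{P}^{-})=0$. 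Hence for every homotopy class $[\eta]$ with $\mu_{\mathbf{x}}([\eta])>0$ we must have $\chi_{2}[\mathfrak{Ind}^{-1}[E]\circ\eta,\mathbb{S}^{1}]=1$, i.e., $\mathfrak{I}[E]([\eta])=1$.

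The final, and arguably the crucial, step is to upgrade this to $\mathfrak{I}[E]\equiv 1$ on all of $\pi_{1}(M)$, after which Corollary \ref{C7.1.3} finishes the proof. This will follow from the classical fact that the normalized Wiener measure $\mu_{\mathbf{x}}$ on the based loop space of a compact Riemannian manifold assigns strictly positive mass to every homotopy class $[\eta]\in\pi_{1}(M)$, a property which should be invoked from the construction of $\mu_{\mathbf{x}}$ in Appendix A (essentially because the heat kernel on the universal cover is strictly positive, so Brownian bridges realize every deck transformation with positive probability). Establishing (or citing) this positivity of $\mu_{\mathbf{x}}$ on each homotopy class is the main obstacle; once it is available, the equivalence $\Lambda(E)=1\Leftrightarrow \mathfrak{I}[E]\equiv 1\Leftrightarrow E$ orientable is immediate from the two-line arguments above.
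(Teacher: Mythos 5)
Your proposal is correct and follows essentially the same route as the paper: the forward direction via Corollary \ref{C7.1.3} and the normalization of $\mu_{\mathbf{x}}$, and the converse via the $\mathcal{P}^{+}/\mathcal{P}^{-}$ decomposition together with the strict positivity of the Wiener measure on each homotopy class. The positivity fact you flag as the remaining step is exactly what the paper establishes in \eqref{7.7.8} (via the strict positivity of the heat kernel on the universal cover), which is the same justification you sketch.
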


\begin{proof}
	By Corollary \ref{C7.1.3}, $\mathfrak{I}[E]\equiv 1$ if $E$ is orientable. Thus, $\mathfrak{I}[E]([\gamma])=1\in\Z_2$ for all $[\gamma]\in\pi_{1}(M)$, which is equivalent to
	\begin{equation*}
	\chi_{2}[\mathfrak{Ind}^{-1}[E]\circ \gamma,\mathbb{S}^{1}]=1\quad \hbox{for all}\;\;
	[\g] \in \pi_{1}(M).
	\end{equation*}
	Therefore, \eqref{7.7.2} implies that
	\begin{equation*}
	\L(E)\equiv \int_{\mathcal{L}_{\mathbf{x}}(M)}\mathfrak{I}[E]([\gamma]) \ d\mu_{\mathbf{x}}(\gamma)=\sum_{[\eta]\in\pi_{1}(M)}\mu_{\mathbf{x}}([\eta])=\mu_{\mathbf{x}}(\mathcal{L}_{\mathbf{x}}(M))=1,
	\end{equation*}
	since $\mu_{\mathbf{x}}$ is a probability measure.
	\par
	Conversely, if \eqref{7.2} holds, then \eqref{7.7.2} implies that
	\begin{equation}
	\label{7.7.3}
	\sum_{[\eta]\in\pi_{1}(M)}\chi_{2}[\mathfrak{Ind}^{-1}[E]\circ \eta,\mathbb{S}^{1}]\cdot \mu_{\mathbf{x}}([\eta])=1.
	\end{equation}
	Subsequently, we consider the following subsets of $\pi_{1}(M)$:
	\begin{align*}
	\mathscr{P} & :=\left\{[\eta]\in\pi_{1}(M): \chi_{2}[\mathfrak{Ind}^{-1}[E]\circ \eta,\mathbb{S}^{1}]=1\right\},\\[1ex]
	\mathscr{N} & :=\left\{[\eta]\in\pi_{1}(M): \chi_{2}[\mathfrak{Ind}^{-1}[E]\circ \eta,\mathbb{S}^{1}]=-1\right\}.
	\end{align*}
	According to \eqref{7.7.3}, we have that
	\begin{equation*}
	1=\sum_{[\eta]\in\pi_{1}(M)}\chi_{2}[\mathfrak{Ind}^{-1}[E]\circ \eta,\mathbb{S}^{1}]\cdot \mu_{\mathbf{x}}([\eta])=\sum_{[\eta]\in\mathscr{P}}\mu_{\mathbf{x}}([\eta])-
	\sum_{[\eta]\in\mathscr{N}}\mu_{\mathbf{x}}([\eta]).
	\end{equation*}
	On the other hand,
	$$
	1= \sum_{[\eta]\in\pi_{1}(M)}\mu_{\mathbf{x}}([\eta])=
	\sum_{[\eta]\in\mathscr{P}}\mu_{\mathbf{x}}([\eta])+
	\sum_{[\eta]\in\mathscr{N}}\mu_{\mathbf{x}}([\eta]).
	$$
	Thus, by subtracting the last two identities, we find that
	\begin{equation*}
	\sum_{[\eta]\in\mathscr{N}}\mu_{\mathbf{x}}([\eta])=0.
	\end{equation*}
	On the other hand, on the  last two lines before the statement of
	Theorem \ref{T7.2.5} in Section 10.2.1 bellow it is established that
	every path-connected component of $\mathcal{L}_{\mathbf{x}}(M)$ has non vanishing Wiener measure.
	Thus, $\mu_{\mathbf{x}}([\eta])>0$ for each $[\eta]\in\pi_{1}(M)$ and hence, $\mathscr{N}=\emptyset$.
	Therefore,
	$$
	\chi_{2}[\mathfrak{Ind}^{-1}[E]\circ \eta,\mathbb{S}^{1}]=1 \quad\hbox{for all}\;\; [\eta]\in\pi_{1}(M).
	$$
	This implies that $\mathfrak{I}[E]\equiv 1$. By Corollary \ref{C7.1.3}, $E$ is orientable.
\end{proof}

The final result of this section reads as follows.

\begin{theorem}
	\label{T7.2.3}
	The map $\L:\tilde{K}\mathcal{O}(M)\to [-1,1]$ is a topological invariant of stable equivalence classes of real vector bundles over $M$, i.e., $\L(E)=\L(F)$ if $E=F$ in $\tilde{K}\mathcal{O}(M)$.
\end{theorem}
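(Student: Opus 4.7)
The plan is to reduce the statement directly to the fact, already established in Theorem \ref{th7.3}, that the intersection morphism $\mathfrak{I}:\tilde{K}\mathcal{O}(M)\to \Hom(\pi_{1}(M),\mathbb{Z}_{2})$ is itself well-defined on stable equivalence classes. Since the integrand entering $\L(E)$ is precisely the value $\mathfrak{I}[E]([\gamma])$, and this homomorphism depends only on the class $[E]\in\tilde{K}\mathcal{O}(M)$ via the factorization $\mathfrak{I}=\Gamma\circ w_{1}$ from Theorem \ref{th7.2} (in which both $w_{1}$ and $\Gamma$ are already defined at the level of stable equivalence classes), the entire integral can only depend on $[E]$.

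More concretely, I would start by fixing two stably equivalent real vector bundles $E, F$ on $M$, i.e., assume $E=F$ in $\tilde{K}\mathcal{O}(M)$. The first step is to invoke Theorem \ref{th7.3} to conclude that $\mathfrak{I}[E]=\mathfrak{I}[F]$ as homomorphisms $\pi_{1}(M)\to\mathbb{Z}_{2}$, so that
\begin{equation*}
\chi_{2}[\mathfrak{Ind}^{-1}[E]\circ\eta,\mathbb{S}^{1}]=\chi_{2}[\mathfrak{Ind}^{-1}[F]\circ\eta,\mathbb{S}^{1}]
\end{equation*}
holds for every $[\eta]\in\pi_{1}(M)$. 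The second step is to apply the series representation of $\L$ provided by Theorem \ref{T7.2.1},
\begin{equation*}
\L(E)=\sum_{[\eta]\in\pi_{1}(M)}\chi_{2}[\mathfrak{Ind}^{-1}[E]\circ\eta,\mathbb{S}^{1}]\cdot\mu_{\mathbf{x}}([\eta]),
\end{equation*}
together with its analogue for $F$. Since the two series coincide term by term, I conclude that $\L(E)=\L(F)$, which is precisely the desired invariance.

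I do not anticipate any genuine obstacle here: the result is essentially bookkeeping, combining the fact that the integrand depends only on the stable class with the series representation of Theorem \ref{T7.2.1}. The only mild subtlety, already handled within that theorem, is the measurability of $\gamma\mapsto\mathfrak{I}[E]([\gamma])$ and the convergence of the resulting series; both follow at once from the observation that $\mathfrak{I}[E]$ is $\{\pm 1\}$-valued and that $\mu_{\mathbf{x}}$ is a probability measure on $\mathcal{L}_{\mathbf{x}}(M)$, so that $|\L(E)|\leq 1$ and the bundle-theoretic identification above can be applied pointwise under the integral.
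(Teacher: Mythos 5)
Your argument is correct and follows the same route as the paper: invoke Theorem \ref{th7.3} to deduce $\mathfrak{I}[E]=\mathfrak{I}[F]$, pass to the equality of $\chi_{2}$-terms for every $[\eta]\in\pi_{1}(M)$, and conclude via the series representation of Theorem \ref{T7.2.1}. No gaps.
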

\begin{proof}
	Suppose that $E=F$ in $\tilde{K}\mathcal{O}(M)$. Then, $\mathfrak{I}[E]=\mathfrak{I}[F]$, because $\mathfrak{I}$ is a topological invariant of real vector bundles. Thus,
	\begin{equation*}
	\chi_{2}[\mathfrak{Ind}^{-1}[E]\circ \eta,\mathbb{S}^{1}]= \chi_{2}[\mathfrak{Ind}^{-1}[F]\circ \eta,\mathbb{S}^{1}]\quad \hbox{for all}\;\;  [\eta]\in\pi_{1}(M).
	\end{equation*}
	Hence, we obtain that
	\begin{equation*}
	\sum_{[\eta]\in\pi_{1}(M)}\chi_{2}[\mathfrak{Ind}^{-1}[E]\circ \eta,\mathbb{S}^{1}] \cdot \mu_{\mathbf{x}}([\eta])= \sum_{[\eta]\in\pi_{1}(M)}\chi_{2}[\mathfrak{Ind}^{-1}[F]\circ \eta,\mathbb{S}^{1}] \cdot \mu_{\mathbf{x}}([\eta]).
	\end{equation*}
	Therefore, by \eqref{7.7.2}, $\L(E)=\L(F)$. This ends the proof.
\end{proof}
However, since, by Theorem \ref{T7.2.2}, $\L$  is based on $w_{1}$,  it does not allow to compare orientable vector bundles. Instead, $\L$ is useful to compare non orientable bundles, as it
somehow measures their  degrees of non-orientability.
\par
{We end this section by providing with an additive formulae for $\L$.
	
	\begin{proposition}
		\label{P7.2.4}
		For every $E$, $F\in\tilde{K}\mathcal{O}(M)$,
		\begin{equation*}
		\L([E]\oplus [F])=\sum_{[\eta]\in\pi_{1}(M)}\chi_{2}[\mathfrak{Ind}^{-1}[E]\circ \eta,\mathbb{S}^{1}]\cdot \chi_{2}[\mathfrak{Ind}^{-1}[F]\circ\eta,\mathbb{S}^{1}]\cdot \mu_{\mathbf{x}}([\eta]).
		\end{equation*}
	\end{proposition}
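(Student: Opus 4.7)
The plan is to exploit the fact, established just before the statement, that the intersection morphism $\mathfrak{I}=\Gamma\circ w_{1}$ is a morphism of abelian groups from $\tilde{K}\mathcal{O}(M)$ into $\mathrm{Hom}(\pi_{1}(M),\mathbb{Z}_{2})$. Indeed, $w_{1}:\tilde{K}\mathcal{O}(M)\to H^{1}(M,\mathbb{Z}_{2})$ is a group homomorphism (by the standard Whitney product formula for the total Stiefel--Whitney class restricted to degree one), and $\Gamma$ is a group isomorphism by the universal coefficient theorem. Since the sum in $\tilde{K}\mathcal{O}(M)$ is induced by the Whitney sum $\oplus$, this yields, for every $[\eta]\in\pi_{1}(M)$, the additive identity
$$
\mathfrak{I}[E\oplus F]([\eta])=\mathfrak{I}[E]([\eta])+\mathfrak{I}[F]([\eta])\quad \text{in }\mathbb{Z}_{2}.
$$

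The second step is to transfer this identity to the multiplicative invariant $\chi_{2}$. By the very definition of the intersection morphism, $\mathfrak{I}[E]([\eta])\in\mathbb{Z}_{2}$ corresponds to $\chi_{2}[\mathfrak{Ind}^{-1}[E]\circ\eta,\mathbb{S}^{1}]\in\{-1,+1\}$ through the canonical group isomorphism $(\mathbb{Z}_{2},+)\simeq(\{-1,+1\},\cdot)$. Under this identification, addition in $\mathbb{Z}_{2}$ becomes multiplication of signs, so the previous additivity translates into
$$
\chi_{2}[\mathfrak{Ind}^{-1}[E\oplus F]\circ\eta,\mathbb{S}^{1}]=\chi_{2}[\mathfrak{Ind}^{-1}[E]\circ\eta,\mathbb{S}^{1}]\cdot \chi_{2}[\mathfrak{Ind}^{-1}[F]\circ\eta,\mathbb{S}^{1}].
$$

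Finally, I would apply Theorem \ref{T7.2.1} directly to the class $[E]\oplus[F]$, which gives
$$
\L([E]\oplus[F])=\sum_{[\eta]\in\pi_{1}(M)}\chi_{2}[\mathfrak{Ind}^{-1}[E\oplus F]\circ\eta,\mathbb{S}^{1}]\cdot \mu_{\mathbf{x}}([\eta]),
$$
and substituting the multiplicative identity of the preceding step into this sum yields the desired formula. The only mildly delicate point, which I would be careful to spell out, is the passage from additivity in $\mathbb{Z}_{2}$ (which is the natural target of $\mathfrak{I}$) to multiplicativity in $\{\pm 1\}$ (the natural range of $\chi_{2}$); once the isomorphism between these two incarnations of the cyclic group of order two is made explicit, no further obstacle is expected, as the rest is a direct invocation of Theorem \ref{th7.2} and Theorem \ref{T7.2.1}.
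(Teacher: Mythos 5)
Your proof is correct, but it follows a genuinely different route from the paper's. The paper establishes the key multiplicative identity
$$
\chi_{2}[\mathfrak{Ind}^{-1}([E]\oplus[F])\circ \eta,\mathbb{S}^{1}]
=\chi_{2}[\mathfrak{Ind}^{-1}[E]\circ\eta,\mathbb{S}^{1}]\cdot
\chi_{2}[\mathfrak{Ind}^{-1}[F]\circ\eta,\mathbb{S}^{1}]
$$
operator-theoretically: it observes that $\mathfrak{Ind}^{-1}:(\tilde{K}\mathcal{O}(M),\oplus)\to([M,\Phi_{0}(U)],\circ)$ is a group homomorphism, where the composition law on $[M,\Phi_{0}(U)]$ is pointwise composition $(\mathfrak{L}\circ\mathfrak{P})(x)=\mathfrak{L}(x)\circ\mathfrak{P}(x)$, and then invokes the product formula (PF) of the generalized algebraic multiplicity $\chi$ from Theorem~\ref{th24}. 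Your argument instead uses the factorization $\mathfrak{I}=\Gamma\circ w_{1}$ from Theorem~\ref{th7.2}, the Whitney product formula for the first Stiefel--Whitney class (so that $w_{1}$ is additive under $\oplus$), and the isomorphism $(\mathbb{Z}_2,+)\simeq(\{\pm 1\},\cdot)$ to convert additivity of $\mathfrak{I}$ into multiplicativity of $\chi_{2}$. Both routes land on the same intermediate identity and then substitute into Theorem~\ref{T7.2.1}. The paper's route stays internal to the algebraic-multiplicity/parity machinery and is slightly more direct (it needs no input from characteristic-class theory); yours leans on the already-proved bridge to $w_{1}$ and on the classical Whitney formula, which is a perfectly legitimate and arguably more conceptual shortcut, at the cost of importing a topological fact that the paper's proof avoids.
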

	\begin{proof}
		By definition,
		\begin{equation*}
		\L([E]\oplus [F])=\sum_{[\eta]\in\pi_{1}(M)}\chi_{2}[\mathfrak{Ind}^{-1}([E]\oplus[F])\circ \eta,\mathbb{S}^{1}]\cdot \mu_{\mathbf{x}}([\eta]).
		\end{equation*}
		Moreover, setting
		$$
		(\mathfrak{L}\circ\mathfrak{P})(x):=\mathfrak{L}(x)\circ\mathfrak{P}(x)\quad \hbox{for all}\;\; x\in M,
		$$
		it is apparent that
		$$
		\mathfrak{Ind}^{-1}:(\tilde{K}\mathcal{O}(M),\oplus)\to([M,\Phi_{0}(U)],\circ)
		$$
		defines a homomorphism of groups. Thus,
		\begin{equation*}
		\chi_{2}[\mathfrak{Ind}^{-1}([E]\oplus[F])\circ \eta,\mathbb{S}^{1}]=\chi_{2}[\{\mathfrak{Ind}^{-1}[E]\circ\mathfrak{Ind}^{-1}[F]\}\circ \eta,\mathbb{S}^{1}].
		\end{equation*}
		Hence
		\begin{equation*}
		\L([E]\oplus [F])=\sum_{[\eta]\in\pi_{1}(M)}\chi_{2}[(\mathfrak{Ind}^{-1}[E]\circ \eta)\circ(\mathfrak{Ind}^{-1}[F]\circ \eta),\mathbb{S}^{1}]\cdot \mu_{\mathbf{x}}([\eta]).
		\end{equation*}
		Therefore,  by the definition of $\chi_{2}$ and the product formula of the generalized algebraic multiplicity, we find that
		\begin{equation*}
		\chi_{2}[(\mathfrak{Ind}^{-1}[E]\circ \eta)\circ(\mathfrak{Ind}^{-1}[F]\circ \eta),\mathbb{S}^{1}]=\chi_{2}[\mathfrak{Ind}^{-1}[E]\circ\eta,\mathbb{S}^{1}]\cdot\chi_{2}[\mathfrak{Ind}^{-1}[F]\circ\eta,\mathbb{S}^{1}].
		\end{equation*}
		This concludes the proof.
	\end{proof}
	
	\subsection{Decomposition of the Wiener measure}
	
	\noindent In this section, we will reduce the calculation of the global torsion invariant to the determination of the heat kernel of the universal covering of $M$, which is far more easy to compute than the one of $M$. Note that the loop space of $M$, $\mathcal{L}_{\mathbf{x}}(M)$, can be expressed as the union of its path-connected components. In other words,
	\begin{equation}\label{7.7.5}
	\mathcal{L}_{\mathbf{x}}(M)=\biguplus_{\eta\in\pi_{1}(M,\mathbf{x})}[\eta],
	\end{equation}
	where $[\eta]$ denotes the homotopy class of $\eta$. Let $\tilde{M}$ be the universal covering space of $M$ with covering projection $\pi:\tilde{M}\to M$. Consider in $\tilde{M}$ the Riemannian structure given by the pull-back metric $\tilde{g}:=\pi^{\ast}g$. Then, $\pi:(\tilde{M},\tilde{g})\to(M,g)$ is a regular Riemannian covering. According to, e.g.,
	\cite[Cor. 4, Sect. 6, Ch. 2]{SPN},  the fundamental group of $M$ based on $\mathbf{x}$, $\pi_{1}(M,\mathbf{x})$, is isomorphic to the group of deck, or covering, transformations of the covering $\pi:\tilde{M}\to M$, subsequently denoted by $\Aut_{M}\tilde{M}$. Actually, once chosen $\tilde{\mathbf{x}}\in \pi^{-1}(\mathbf{x})$, the isomorphism can be defined through
	\begin{equation*}
	\Phi: \pi_{1}(M,\mathbf{x}) \longrightarrow \Aut_{M}\tilde{M}, \quad [\eta] \mapsto \varphi_{\eta},
	\end{equation*}
	where $\varphi_{\eta}:\tilde{M}\to\tilde{M}$ is the unique covering transformation that sends $\tilde{\mathbf{x}}$ to $\tilde{\eta}(1)$, and $\tilde{\eta}$ is the unique lifting of $\eta$ with $\tilde{\eta}(0)=\tilde{\mathbf{x}}$. In this way, we can rewrite \eqref{7.7.5} in the form
	\begin{equation*}
	\mathcal{L}_{\mathbf{x}}(M)=\biguplus_{\varphi\in\Aut_{M}\tilde{M}}\mathcal{L}^{\varphi}_{\mathbf{x}}(M),
	\end{equation*}
	where $\mathcal{L}^{\varphi}_{\mathbf{x}}(M)$ stands for the path component of $\mathcal{L}_{\mathbf{x}}(M)$ containing the homotopy class $\Phi^{-1}(\varphi)$. According to
	\cite[Th. 4.3]{B3} and \cite{U}, it is easily seen that the map
	\begin{equation*}
	\Theta: \biguplus_{\varphi\in\Aut_{M}\tilde{M}}\mathcal{C}_{\tilde{\mathbf{x}}}^{\varphi(\tilde{\mathbf{x}})}(\tilde{M}) \longrightarrow \mathcal{L}_{\mathbf{x}}(M), \quad
	\tilde{\eta} \mapsto \pi\circ \tilde{\eta},
	\end{equation*}
	is a homeomorphism with the uniform convergence topology, where we use the notation $\tilde{\eta}$  to emphasize  that the curve is defined on $\tilde{M}$, and the spaces $\mathcal{C}_{\tilde{\mathbf{x}}}^{\varphi(\tilde{\mathbf{x}})}(\tilde{M})$ are defined by
	\begin{equation*}
	\mathcal{C}_{\tilde{\mathbf{x}}}^{\varphi(\tilde{\mathbf{x}})}(\tilde{M}):=\{\gamma\in \mathcal{C}([0,1],\tilde{M}): \gamma(0)=\tilde{\mathbf{x}}, \ \gamma(1)=\varphi(\tilde{\mathbf{x}}) \}.
	\end{equation*} 
	Moreover, it preserves the Wiener measure, in the sense that, for every $B\in\mathcal{B}_{\mathbf{x}}$,
	\begin{equation*}
	\lambda_{\mathbf{x}}(B)=\sum_{\varphi\in\Aut_{M}\tilde{M}}
	\lambda_{\tilde{\mathbf{x}}}^{\varphi(\tilde{\mathbf{x}})}\left(\Theta^{-1}(B)\cap \mathcal{C}_{\tilde{\mathbf{x}}}^{\varphi(\tilde{\mathbf{x}})}(\tilde{M})\right),
	\end{equation*}
	where $\lambda_{\tilde{\mathbf{x}}}^{\varphi(\tilde{\mathbf{x}})}$ is the non-normalised Wiener measure on $\mathcal{C}_{\tilde{\mathbf{x}}}^{\varphi(\tilde{\mathbf{x}})}(\tilde{M})$ and $\lambda_{\mathbf{x}}$ is the non-normalised Wiener measure on $\mathcal{L}_{\mathbf{x}}(M)$ (see Appendix A or \cite{B3} for the precise definition). As a direct consequence, setting $B=\mathcal{ L}_{\mathbf{x}}(M)$, the next relationship between the heat kernel of $M$ and the corresponding heat kernel of its universal covering space $\tilde{M}$ holds
	\begin{equation}\label{7.7.6}
	p_{1}(\mathbf{x},\mathbf{x})=\sum_{\varphi\in\Aut_{M}\tilde{M}}\tilde{p}_{1}(\tilde{\mathbf{x}},\varphi(\tilde{\mathbf{x}})),
	\end{equation}
	where $\tilde{p}_{t}(x,y)$ stands for the heat kernel of $\tilde{M}$. In particular, since
	\begin{equation*}
	\Theta(\mathcal{C}_{\tilde{\mathbf{x}}}^{\varphi(\tilde{\mathbf{x}})}(\tilde{M}))
	=\mathcal{L}^{\varphi}_{\mathbf{x}}(M)\quad  \hbox{for all}\;\; \varphi\in\Aut_{M}\tilde{M},
	\end{equation*}
	the restricted map $\Theta_{\varphi}:\mathcal{C}_{\tilde{\mathbf{x}}}^{\varphi(\tilde{\mathbf{x}})}(\tilde{M})\to \mathcal{L}^{\varphi}_{\mathbf{x}}(M)$ is also a homeomorphism. Moreover, for every  $B\in\mathcal{B}_{\mathbf{x}}\cap\mathcal{L}^{\varphi}_{\mathbf{x}}(M)$, we have that
	\begin{equation}\label{7.7.7}
	\lambda_{\mathbf{x}}(B)=\sum_{\phi\in\Aut_{M}\tilde{M}}
	\lambda_{\tilde{\mathbf{x}}}^{\phi(\tilde{\mathbf{x}})}\left(\Theta^{-1}(B)
	\cap\mathcal{C}_{\tilde{\mathbf{x}}}^{\varphi(\tilde{\mathbf{x}})}(\tilde{M})\right)
	=\lambda_{\tilde{\mathbf{x}}}^{\varphi(\tilde{\mathbf{x}})}(\Theta^{-1}(B)).
	\end{equation}
	Thus, $\Theta_{\varphi}$ also preserves the Wiener measure. As according to  \eqref{7.7.7} we have that
	\begin{equation}
	\label{7.7.8}
	\lambda_{\mathbf{x}}(\mathcal{L}^{\varphi}_{\mathbf{x}}(M))=\lambda_{\tilde{\mathbf{x}}}^{\varphi(\tilde{\mathbf{x}})}(\mathcal{C}_{\tilde{\mathbf{x}}}^{\varphi(\tilde{\mathbf{x}})}(\tilde{M}))=\tilde{p}_{1}(\tilde{\mathbf{x}},\varphi(\tilde{\mathbf{x}}))>0,
	\end{equation}
	it is apparent that every path-connected component of $\mathcal{L}_{\mathbf{x}}(M)$ has non vanishing Wiener measure. Furthermore, as a consequence of \eqref{7.7.6} and \eqref{7.7.8}, the following result holds.
	\begin{theorem}
		\label{T7.2.5}
		The normalized Wiener measure of the path components of $\mathcal{L}_{\mathbf{x}}(M)$ is given by
		\begin{equation}
		\label{7.7.9}
		\mu_{\mathbf{x}}(\mathcal{L}_{\mathbf{x}}^{\varphi}(M))=
		\frac{\tilde{p}_{1}(\tilde{\mathbf{x}},\varphi(\tilde{\mathbf{x}}))}
		{\sum_{\phi}\tilde{p}_{1}(\tilde{\mathbf{x}},\phi(\tilde{\mathbf{x}}))}, \quad \varphi\in\Aut_{M} \tilde{M},
		\end{equation}
		where $\tilde{p}_{t}(x,y)$ denotes the heat kernel of $\tilde{M}$, $\tilde{\mathbf{x}}\in \pi^{-1}(\mathbf{x})$ and the sum runs in $\phi\in\Aut_{M}\tilde{M}$.
	\end{theorem}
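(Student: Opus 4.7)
The plan is essentially a direct bookkeeping exercise combining the two identities derived immediately before the statement, equations \eqref{7.7.6} and \eqref{7.7.8}, together with the fact that the normalization constant turning $\lambda_{\mathbf{x}}$ into the probability measure $\mu_{\mathbf{x}}$ is precisely the heat kernel $p_{1}(\mathbf{x},\mathbf{x})$ of $M$ at the diagonal.

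The first step is to recall, from the construction of the Wiener measure on the loop space recalled in Appendix A, that
\begin{equation*}
\lambda_{\mathbf{x}}(\mathcal{L}_{\mathbf{x}}(M))=p_{1}(\mathbf{x},\mathbf{x}),
\end{equation*}
so that, by the very definition of the normalised measure,
\begin{equation*}
\mu_{\mathbf{x}}(B)=\frac{\lambda_{\mathbf{x}}(B)}{p_{1}(\mathbf{x},\mathbf{x})}\qquad \hbox{for every}\;\; B\in\mathcal{B}_{\mathbf{x}}.
\end{equation*}
Applying this identity with $B=\mathcal{L}^{\varphi}_{\mathbf{x}}(M)$ and using \eqref{7.7.8} for the numerator yields
\begin{equation*}
\mu_{\mathbf{x}}(\mathcal{L}^{\varphi}_{\mathbf{x}}(M))
=\frac{\tilde{p}_{1}(\tilde{\mathbf{x}},\varphi(\tilde{\mathbf{x}}))}{p_{1}(\mathbf{x},\mathbf{x})}.
\end{equation*}

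The second step is to rewrite the denominator using the covering relation \eqref{7.7.6}, which gives
\begin{equation*}
p_{1}(\mathbf{x},\mathbf{x})=\sum_{\phi\in\Aut_{M}\tilde{M}}\tilde{p}_{1}(\tilde{\mathbf{x}},\phi(\tilde{\mathbf{x}})).
\end{equation*}
Substituting this into the previous formula delivers exactly \eqref{7.7.9}. Note that the series in the denominator is absolutely convergent (and non-zero, as each term is strictly positive) precisely because it represents the finite, positive quantity $p_{1}(\mathbf{x},\mathbf{x})$, so the division is legitimate and the resulting numbers lie in $(0,1]$ and sum to one, consistently with $\mu_{\mathbf{x}}$ being a probability measure.

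The proof is therefore almost immediate once \eqref{7.7.6} and \eqref{7.7.8} are in hand; the genuine content of the statement is packaged into those two identities, whose verification in the preceding pages rests on the fact that the map $\Theta$ is a homeomorphism preserving the Wiener measure component by component. The only potential obstacle is ensuring that the identification $\lambda_{\mathbf{x}}(\mathcal{L}_{\mathbf{x}}(M))=p_{1}(\mathbf{x},\mathbf{x})$ is recorded correctly from the appendix; once this is available, the theorem is a one-line computation.
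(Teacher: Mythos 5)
Your proposal is correct and takes exactly the route the paper intends: the paper states Theorem~\ref{T7.2.5} ``as a consequence of \eqref{7.7.6} and \eqref{7.7.8}'' without writing out the details, and your computation (divide \eqref{7.7.8} by $p_1(\mathbf{x},\mathbf{x})$ from \eqref{J} and substitute \eqref{7.7.6} in the denominator) is precisely that one-line consequence, including the observation that the denominator is finite and strictly positive.
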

	
	As a direct consequence of Theorems \ref{7.7.2} and \ref{T7.2.5}, one can determine the global torsion invariant of any given vector bundle $E\to M$ in terms of the generalized algebraic multiplicity $\chi$ and the heat kernel of the universal covering $\tilde{M}$.
	
	\begin{theorem}
		\label{T7.2.6}
		For every $E\in\tilde{K}\mathcal{O}(M)$,
		\begin{equation*}
		\int_{\mathcal{L}_{\mathbf{x}}(M)}\mathfrak{I}[E]([\gamma]) \ d\mu_{\mathbf{x}}(\gamma)=\frac{\sum_{\varphi}\chi_{2}[\varphi]\cdot \tilde{p}_{1}(\tilde{\mathbf{x}},\varphi(\tilde{\mathbf{x}}))}{\sum_{\varphi}\tilde{p}_{1}(\tilde{\mathbf{x}},\varphi(\tilde{\mathbf{x}}))},
		\end{equation*}
		where both sums run in $\varphi\in\Aut_{M}\tilde{M}$, and
		$$
		\chi_{2}[\varphi]\equiv\chi_{2}[\mathfrak{Ind}^{-1}[E]\circ\Phi^{-1}(\varphi),\mathbb{S}^{1}].
		$$
	\end{theorem}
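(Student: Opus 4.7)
The plan is to combine Theorem \ref{T7.2.1} with the measure-theoretic description given by Theorem \ref{T7.2.5}, using the isomorphism $\Phi:\pi_{1}(M,\mathbf{x})\to\Aut_{M}\tilde{M}$ to translate the sum over homotopy classes of loops into a sum over deck transformations.

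First, I would apply Theorem \ref{T7.2.1} to write
\begin{equation*}
\int_{\mathcal{L}_{\mathbf{x}}(M)}\mathfrak{I}[E]([\gamma])\,d\mu_{\mathbf{x}}(\gamma)=\sum_{[\eta]\in\pi_{1}(M)}\chi_{2}[\mathfrak{Ind}^{-1}[E]\circ\eta,\mathbb{S}^{1}]\cdot\mu_{\mathbf{x}}([\eta]).
\end{equation*}
Since $\Phi:\pi_{1}(M,\mathbf{x})\to\Aut_{M}\tilde{M}$ is a group isomorphism, each homotopy class $[\eta]\in\pi_{1}(M)$ corresponds bijectively to a unique $\varphi\in\Aut_{M}\tilde{M}$ via $\varphi=\Phi([\eta])$, equivalently $[\eta]=\Phi^{-1}(\varphi)$. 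Moreover, by construction of the decomposition recalled in Section 10.2.1, the path component $\mathcal{L}_{\mathbf{x}}^{\varphi}(M)$ is precisely the homotopy class $[\eta]=\Phi^{-1}(\varphi)$. Therefore $\mu_{\mathbf{x}}([\eta])=\mu_{\mathbf{x}}(\mathcal{L}_{\mathbf{x}}^{\varphi}(M))$, and the sum becomes
\begin{equation*}
\sum_{\varphi\in\Aut_{M}\tilde{M}}\chi_{2}[\mathfrak{Ind}^{-1}[E]\circ\Phi^{-1}(\varphi),\mathbb{S}^{1}]\cdot\mu_{\mathbf{x}}(\mathcal{L}_{\mathbf{x}}^{\varphi}(M)),
\end{equation*}
which, with the notation $\chi_{2}[\varphi]=\chi_{2}[\mathfrak{Ind}^{-1}[E]\circ\Phi^{-1}(\varphi),\mathbb{S}^{1}]$ introduced in the statement, equals $\sum_{\varphi}\chi_{2}[\varphi]\cdot\mu_{\mathbf{x}}(\mathcal{L}_{\mathbf{x}}^{\varphi}(M))$.

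The remaining step is to substitute the explicit formula \eqref{7.7.9} of Theorem \ref{T7.2.5}, namely
\begin{equation*}
\mu_{\mathbf{x}}(\mathcal{L}_{\mathbf{x}}^{\varphi}(M))=\frac{\tilde{p}_{1}(\tilde{\mathbf{x}},\varphi(\tilde{\mathbf{x}}))}{\sum_{\phi\in\Aut_{M}\tilde{M}}\tilde{p}_{1}(\tilde{\mathbf{x}},\phi(\tilde{\mathbf{x}}))},
\end{equation*}
which, after pulling the common denominator out of the sum in $\varphi$, yields the claimed identity. There is essentially no hard step here: both ingredients, Theorem \ref{T7.2.1} (which requires the countability of $\pi_{1}(M)$ plus dominated convergence) and Theorem \ref{T7.2.5} (which rests on the decomposition of the Wiener measure along path-components via the covering $\pi:\tilde{M}\to M$), have already been carried out in the paper. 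The only point that deserves attention is ensuring a consistent labeling convention between homotopy classes of loops and deck transformations; once this is pinned down through $\Phi$ the computation is a direct substitution. Finally, I would remark that all sums are absolutely convergent because $|\chi_{2}[\varphi]|=1$ and $\sum_{\varphi}\tilde{p}_{1}(\tilde{\mathbf{x}},\varphi(\tilde{\mathbf{x}}))=p_{1}(\mathbf{x},\mathbf{x})<\infty$ by \eqref{7.7.6}, justifying the reordering of the series.
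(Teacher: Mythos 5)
Your proof is correct and follows exactly the route the paper intends: the paper states Theorem \ref{T7.2.6} as a direct consequence of Theorem \ref{T7.2.1} and Theorem \ref{T7.2.5}, and your argument simply spells out this combination by identifying $[\eta]\leftrightarrow\varphi$ via $\Phi$ and substituting formula \eqref{7.7.9}. The remark on absolute convergence via $|\chi_{2}[\varphi]|=1$ and $\sum_{\varphi}\tilde{p}_{1}(\tilde{\mathbf{x}},\varphi(\tilde{\mathbf{x}}))=p_{1}(\mathbf{x},\mathbf{x})<\infty$ is a welcome addition the paper leaves implicit.
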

	
	Moreover, since by Theorem \ref{th5.5}, the restricted morphism $\mathfrak{I}: \Vect_{1}(X)\to \Hom(\pi_{1}(X),\mathbb{Z}_{2})$ is an isomorphism, it follows that
	\begin{equation}
		\label{7.7.10}
	\L(\tilde{K}\mathcal{O}(M))=
	\L(\Vect_{1}(X))=\left\{\int_{\mathcal{L}_{\mathbf{x}}(M)} \xi([\gamma]) \ d\mu_{\mathbf{x}}(\gamma): \xi\in\Hom(\pi_{1}(X),\mathbb{Z}_{2}) \right\}.
	\end{equation}
	Consequently, the values of the global torsion invariant are given by
	\begin{equation}
	\label{7.7.11}
	\L(\tilde{K}\mathcal{O}(M))=\left\{\frac{\sum_{\varphi}\zeta(\varphi)\cdot \tilde{p}_{1}(\tilde{\mathbf{x}},\varphi(\tilde{\mathbf{x}}))}{\sum_{\varphi}\tilde{p}_{1}(\tilde{\mathbf{x}},\varphi(\tilde{\mathbf{x}}))} : \zeta\in \Hom(\Aut_{\tilde{M}}M,\mathbb{Z}_{2}) \right\}.
	\end{equation}
	In the next section, we will see that equation \eqref{7.7.11} is useful for ascertaining the values of the global torsion invariant in some practical examples of interest.

	\section{Examples}
	
	\noindent In this section we will compute the global torsion invariant of the circle $\mathbb{S}^{1}$ and the $n$-dimensional torus $\mathbb{T}^{n}$ by using all the machinery developed in the preceding sections.
	
	\subsection{Global torsion invariant of  $\mathbb{S}^{1}$} The aim of this subsection  is computing  $\L:\tilde{K}\mathcal{O}(\mathbb{S}^{1})\to[-1,1]$ where  the circle $\mathbb{S}^{1}$ is regarded as the quotient $\mathbb{R}/2\sqrt{\pi}\mathbb{Z}$; the period factor $2\sqrt{\pi}$ is chosen for computational convenience. It is well known that
	\begin{equation*}
	\tilde{K}\mathcal{O}(\mathbb{S}^{1})=\{[T\mathbb{S}^{1}],[\mathcal{M}]\},
	\end{equation*}
	where $T\mathbb{S}^{1}$ is the tangent bundle of $\mathbb{S}^{1}$ and $\mathcal{M}$ is the M\"{o}bius bundle. Viewed as groups, it is easily seen that $\tilde{K}\mathcal{O}(\mathbb{S}^{1})\simeq \mathbb{Z}_{2}$, where $T\mathbb{S}^{1}$ is the identity (since it is trivial) and $\mathcal{M}$ is the generator.
	\par
	We begin by computing the index map. Let $U$ be an admissible real Banach space. By Theorem \ref{T10.1.111}, $[\mathbb{S}^{1},\Phi_{0}(U)]\simeq \mathbb{Z}_{2}$. Let $\mathfrak{C}:\mathbb{S}^{1}\to \Phi_{0}(U)$ be the constant map $x\mapsto T$, where $T\in GL(U)$ is fixed.  Since $\mathfrak{Ind}$ is a group homomorphism, the identity must go to the identity and hence, $\mathfrak{Ind}([\mathfrak{C}])=[T\mathbb{S}^{1}]$. Pick a singular operator $T\in\mathcal{S}(U)$ and an open ball $B_\e(T) \subset \Phi_{0}(U)$ of centre $T$ and radius $\varepsilon>0$. Based on the matrix decomposition \eqref{ii.5}, $T$ can be expressed as
	\begin{equation*}
	T=\left(\begin{array}{cc} T_{11} & 0 \\[1ex] 0 & 0 \end{array}\right),
	\end{equation*}
	with $T_{11}\in GL(\Ker[T]^{\perp}, R[T])$. Now, consider the segment $\gamma: J_\e\equiv [-\frac{\varepsilon}{2},\frac{\varepsilon}{2}]\to\Phi_{0}(U)$ defined by
	\begin{equation*}
	\gamma(t):=\left(\begin{array}{cc}
	T_{11} & 0 \\[1ex]
	0 & t I_{n}
	\end{array}\right)=T_{11}\oplus t I_{n},\qquad t\in J_\e,
	\end{equation*}
	where $n=\dim\Ker[T]$ and $I_{n}$ is the identity matrix of rank $n$. Note that
	$\gamma(t)\in GL(U)$ for every $t\in J_\e \setminus\{0\}$, and that $\gamma(J_\e) \subset B_\e(T)$. Now, we re-parameterize this curve in $t$ under an affine transformation to get a curve parameterized in $[0,\frac{1}{2}]$, denoted by $\gamma_{1}:[0,\frac{1}{2}]\to\Phi_{0}(U)$, such that $\gamma_{1}(0), \gamma_{1}(\frac{1}{2})\in GL(U)$. Observe that, since $GL(U)$ is contractible, it is, in particular, path-connected. Thus, there exists a curve $\gamma_{2}:[\frac{1}{2},1]\to\Phi_{0}(U)$ such that $\gamma_{2}(\frac{1}{2})=\gamma_{1}(0)$, $\gamma_{2}(1)=\gamma_{1}(\frac{1}{2})$ and $\gamma_{2}([\frac{1}{2},1])\subset GL(U)$. Consider the curve
	\begin{equation}
	\label{7.7.12}
	\mathfrak{L}:\mathbb{S}^{1}\longrightarrow \Phi_{0}(U), \quad \mathfrak{L}(t):=\left\{\begin{array}{ll}
	\gamma_{1}(t) & \hbox{if}\;\; t\in [0,\frac{1}{2}], \\[1ex]
	\gamma_{2}(t) & \hbox{if}\;\; t\in [\frac{1}{2},1].
	\end{array}\right.
	\end{equation}
	Let $\gamma:[0,1]\to\mathbb{S}^{1}$ be the parametrization of the circle given by $\gamma(t)=(\cos(2\pi t),\sin(2\pi t))$. Then, by the properties of the parity
	\begin{align*}
	\sigma(\mathfrak{L}\circ \gamma,[0,1])&=\sigma(\gamma_{1},[0,\tfrac{1}{2}])\;\sigma(\gamma_{2},[\tfrac{1}{2},1])
	=\sigma(\gamma_{1},[0,\tfrac{1}{2}]) \\[1ex]
	&=\sigma(\gamma,[-\tfrac{\varepsilon}{2},\tfrac{\varepsilon}{2}])= \sigma(T_{11}\oplus t I_{n},[-\tfrac{\varepsilon}{2},\tfrac{\varepsilon}{2}])  \\[1ex]
	&=\sigma(T_{11},[-\tfrac{\varepsilon}{2},\tfrac{\varepsilon}{2}])\;\sigma(t I_{n},[-\tfrac{\varepsilon}{2},\tfrac{\varepsilon}{2}])=\sigma(t I_{n},[-\tfrac{\varepsilon}{2},\tfrac{\varepsilon}{2}]) \\[1ex] &
	=\sign\det (-\tfrac{\varepsilon}{2}  I_{n})\; \sign \det (\tfrac{\varepsilon}{2} I_{n})=-1.
	\end{align*}
	Hence, since $\sigma(\mathfrak{L},\mathbb{S}^{1})=\sigma(\mathfrak{L}\circ \gamma,[0,1])$, we deduce that $\sigma(\mathfrak{L},\mathbb{S}^{1})=-1$. Consequently,
	\begin{equation}
	\label{7.7.13}
	\sigma(\mathfrak{C},\mathbb{S}^{1})=1,\qquad \sigma(\mathfrak{L},\mathbb{S}^{1})=-1.
	\end{equation}
	Thus, since the parity map $\sigma:[\mathbb{S}^{1},\Phi_{0}(U)]\to\mathbb{Z}_{2}$ is an isomorphism, it becomes apparent that $[\mathfrak{C}]\neq [\mathfrak{L}]$ on $[\mathbb{S}^{1},\Phi_{0}(U)]$. Therefore, the index map is necessarily given by
	\begin{equation*}
	\mathfrak{Ind}: [\mathbb{S}^{1},\Phi_{0}(U)] \longrightarrow \tilde{K}\mathcal{O}(\mathbb{S}^{1})
	\qquad \left\{\begin{array}{l}  [\mathfrak{C}]\mapsto [T\mathbb{S}^{1}], \\[1ex] [\mathfrak{L}]\mapsto [\mathcal{M}].\end{array}\right.
	\end{equation*}
	To compute $\L$, we still have to calculate $\mathfrak{I}[E]:\pi_{1}(\mathbb{S}^{1})\to \mathbb{Z}_{2}$ for every $E\in \tilde{K}\mathcal{O}(\mathbb{S}^{1})$.
	\par
	Suppose $E=[T\mathbb{S}^{1}]$. Then, regarding $\mathbb{S}^{1}$ as the unit circle in $\mathbb{C}$, $|z|=1$, and setting
	\begin{equation*}
	\pi_{1}(\mathbb{S}^{1})=\{[\gamma_{n}]: \gamma_{n}:\mathbb{S}^{1}\to\mathbb{S}^{1}, \gamma_{n}(z)=z^{n}\}\simeq \mathbb{Z},
	\end{equation*}
	it follows that, for every $n\in\mathbb{Z}$,
	\begin{align*}
	\mathfrak{I}[E]([\gamma_{n}])=
	\chi_{2}[\mathfrak{Ind}^{-1}[E]\circ\gamma_{n},\mathbb{S}^{1}]=
	\chi_{2}[\mathfrak{C}\circ\gamma_{n},\mathbb{S}^{1}]=\chi_{2}[T,\mathbb{S}^{1}]=1,
	\end{align*}
	by the properties of $\chi$ discussed in Section 2. Thus, $\mathfrak{I}[E]\equiv 1$ and therefore
	\begin{equation*}
	\L([T\mathbb{S}^{1}])=\int_{\mathcal{L}_{\mathbf{x}}(\mathbb{S}^{1})}\mathfrak{I}[E]([\gamma])\ d\mu_{\mathbf{x}}(\gamma)=\int_{\mathcal{L}_{\mathbf{x}}(\mathbb{S}^{1})}1\cdot d\mu_{\mathbf{x}}(\gamma)=1.
	\end{equation*}
	Note that this value can be also found, directly, by applying Theorem \ref{T7.2.2}, as the trivial bundle is orientable.
	\par
	Subsequently, we suppose that $E=[\mathcal{M}]$. Then, since $\gamma_{n}=\gamma_{1}\circ \overset{n}{\cdots} \circ \gamma_{1}$, by the product formula of the multiplicity, it becomes apparent that, for every $n\in\mathbb{Z}$,
	\begin{align*}
	\mathfrak{I}[E]([\gamma_{n}])&=
	\chi_{2}[\mathfrak{Ind}^{-1}[E]\circ\gamma_{n},\mathbb{S}^{1}] =\prod_{i=1}^{n}\chi_{2}[\mathfrak{Ind}^{-1}[E]\circ\gamma_{1},\mathbb{S}^{1}]\\
	&=\prod_{i=1}^{n}\chi_{2}[\mathfrak{L}\circ\gamma_{1},\mathbb{S}^{1}]=
	\prod_{i=1}^{n}\chi_{2}[\mathfrak{L},\mathbb{S}^{1}].
	\end{align*}
	Next, we will determine $\chi_{2}[\mathfrak{L},\mathbb{S}^{1}]$ through two different techniques. The first one is heuristic and uses the relationship between the algebraic multiplicity and the intersection index. By the definition of $\mathfrak{L}$,
	the only intersection of $\mathfrak{L}$ with the singular variety $\mathcal{S}(U)$ occurs at the point $T$ and this intersection is transversal, as illustrated by Figure \ref{F7.1}. Thus,   $i_{2}(\mathfrak{L},\mathbb{S}^{1})=-1$ and therefore
	\begin{equation*}
	\chi_{2}[\mathfrak{L},\mathbb{S}^{1}]=i_{2}(\mathfrak{L},\mathbb{S}^{1})=-1.
	\end{equation*}
	The second technique is rigorous and uses the relationship of the multiplicity $\chi$ with the parity materialized by Theorem \ref{th4.8}. Indeed, by \eqref{7.7.13}, we have that
	\begin{equation*}
	\chi_{2}[\mathfrak{L},\mathbb{S}^{1}]=\sigma(\mathfrak{L},\mathbb{S}^{1})=-1.
	\end{equation*}
	Thus, for every $n\in\mathbb{Z}$,
	\begin{align*}
	\mathfrak{I}[E]([\gamma_{n}])=\prod_{i=1}^{n}\chi_{2}[\mathfrak{L},\mathbb{S}^{1}]=(-1)^{n}.
	\end{align*}
	Hence, we find that
	\begin{align*}
	\L(E) =\int_{\mathcal{L}_{\mathbf{x}}(\mathbb{S}^{1})}\mathfrak{I}[E]([\gamma]) \ d\mu_{\mathbf{x}}(\gamma)
	= \sum_{n\in\mathbb{Z}}(-1)^{n}\cdot \mu_{\mathbf{x}}([\gamma_{n}]).
	\end{align*}
	
	\begin{figure}
		\begin{center}

			\tikzset{every picture/.style={line width=0.75pt}} 
			
			\begin{tikzpicture}[x=0.75pt,y=0.75pt,yscale=-1,xscale=1]
			
			\draw  [color={rgb, 255:red, 74; green, 144; blue, 226 }  ,draw opacity=1 ][line width=2.25]  (79,90) .. controls (79,46.92) and (113.92,12) .. (157,12) .. controls (200.08,12) and (235,46.92) .. (235,90) .. controls (235,133.08) and (200.08,168) .. (157,168) .. controls (113.92,168) and (79,133.08) .. (79,90) -- cycle ;
			\draw   (142.01,58.46) -- (177.64,123.49) .. controls (173.89,116.65) and (141.13,127.38) .. (104.47,147.47) .. controls (67.81,167.56) and (41.13,189.39) .. (44.88,196.23) -- (9.25,131.2) .. controls (5.5,124.36) and (32.18,102.53) .. (68.84,82.44) .. controls (105.5,62.36) and (138.26,51.62) .. (142.01,58.46) -- cycle ;
			\draw  [fill={rgb, 255:red, 0; green, 0; blue, 0 }  ,fill opacity=1 ] (79,113.5) .. controls (79,111.57) and (80.57,110) .. (82.5,110) .. controls (84.43,110) and (86,111.57) .. (86,113.5) .. controls (86,115.43) and (84.43,117) .. (82.5,117) .. controls (80.57,117) and (79,115.43) .. (79,113.5) -- cycle ;
			\draw  [color={rgb, 255:red, 255; green, 255; blue, 255 }  ,draw opacity=1 ][line width=3] [line join = round][line cap = round] (82.33,129.33) .. controls (84.69,126.97) and (92.33,121.77) .. (92.33,120.33) ;
			\draw  [color={rgb, 255:red, 255; green, 255; blue, 255 }  ,draw opacity=1 ][line width=3] [line join = round][line cap = round] (87.33,138.33) .. controls (91.68,138.33) and (96.54,130.33) .. (101.33,130.33) ;
			\draw  [color={rgb, 255:red, 255; green, 255; blue, 255 }  ,draw opacity=1 ][line width=3] [line join = round][line cap = round] (91.33,148.33) .. controls (93.2,146.47) and (104.33,137.17) .. (104.33,136.33) ;
			
			\draw (225,8) node [anchor=north west][inner sep=0.75pt]    {$\mathfrak{L}\left(\mathbb{S}^{1}\right)$};
			\draw (32,145) node [anchor=north west][inner sep=0.75pt]    {$\mathcal{S}( U)$};
			\draw (62,111) node [anchor=north west][inner sep=0.75pt]    {$T$};

			\end{tikzpicture}
		\end{center}
		\caption{Transversal intersection of $\mathfrak{L}$ with $\mathcal{S}(U)$.}
		\label{F7.1}
	\end{figure}

	\par
	Now, we proceed to the computation of $\mu_{\mathbf{x}}([\gamma_{n}])$ through \eqref{7.7.9}. Let us consider the circle $\mathbb{S}^{1}$ as the quotient $\mathbb{R}/ 2\sqrt{\pi}\mathbb{Z}$. It is well known that the universal covering of $M=\mathbb{S}^{1}$ is $\tilde{M}=\mathbb{R}$ with corresponding covering map
	\begin{equation*}
	\pi:\tilde{M}\longrightarrow M, \quad x\mapsto [x],
	\end{equation*}
	where $[x]$ denotes the class of $x\in\mathbb{R}$ in the quotient $\mathbb{R}/ 2\sqrt{\pi}\mathbb{Z}$. It is easily seen that
	$$
	\Aut_{M}\tilde{M}=\{\varphi^{n} : n\in \mathbb{Z}\},
	$$
	where $\varphi^{n}(x)=x+2\sqrt{\pi}n$, $x\in\mathbb{R}$, for each $n\in\mathbb{N}$. Since the heat kernel of the universal covering space $\tilde{M}=\mathbb{R}$ is
	\begin{equation*}
	\tilde{p}_{t}(x,y)=\frac{1}{\sqrt{4\pi  t}}\exp\left\{-\frac{|x-y|^{2}}{4t} \right\},
	\end{equation*}
	it follows from \eqref{7.7.9} that, for every $n\in\mathbb{Z}$,
	\begin{align*}
	\mu_{\mathbf{x}}(\mathcal{L}^{\varphi^{n}}_{\mathbf{x}}(\mathbb{S}^{1}))&=\frac{\tilde{p}_{1}(\tilde{\mathbf{x}},\varphi^{n}(\tilde{\mathbf{x}}))}{\sum_{m\in\mathbb{Z}}\tilde{p}_{1}(\tilde{\mathbf{x}},\varphi^{m}(\tilde{\mathbf{x}}))}\\
	&=\frac{\exp\{-\pi n^{2}\}}{\sum_{m\in\mathbb{Z}}\exp\{-\pi m^{2}\}}=\frac{\Gamma\left(\frac{3}{4}\right)}{\sqrt[4]{\pi}}\exp\{-\pi n^{2}\}.
	\end{align*}
	For the last identity, we have used that
	\begin{equation}
	\label{7.7.14}
	\sum_{m=-\infty}^{\infty}e^{-\pi m^{2}}=\theta_{3}(0,e^{-\pi})=\frac{\sqrt[4]{\pi}}{\Gamma\left(\frac{3}{4}\right)},
	\end{equation}
	where $\theta_{3}(z,q)$ stands for  the Jacobi  Theta function (see \cite{Yi}, if necessary). Hence,
	\begin{equation*}
	\L(E)=\int_{\mathcal{L}_{\mathbf{x}}(\mathbb{S}^{1})}\mathfrak{I}[E]([\gamma]) \ d\mu_{\mathbf{x}}(\gamma)=\frac{\Gamma\left(\frac{3}{4}\right)}{\sqrt[4]{\pi}} \sum_{n\in\mathbb{Z}}(-1)^{n} \exp(-\pi n^{2}).
	\end{equation*}
	Again, a simple computation with the Jacobi Theta function yields to
	\begin{equation}
	\label{7.7.15}
	\sum_{n\in\mathbb{Z}}(-1)^{n} \exp(-\pi n^{2})=\sqrt[4]{\frac{\pi}{2}}\frac{1}{\Gamma\left(\frac{3}{4}\right)},
	\end{equation}
	which implies that
	\begin{equation*}
	\L([\mathcal{M}])=\frac{1}{\sqrt[4]{2}}.
	\end{equation*}
	In particular, by Theorem \ref{T7.2.2}, since $\L([\mathcal{M}])\neq 1$,  it follows that $\mathcal{M}$ is not orientable. So, our analysis establishes a new (different) proof  of this well known result.
	\par
	Therefore, we have proved that the global torsion invariant $\L$ of the circle is given by
	\begin{equation*}
	\L:\tilde{K}\mathcal{O}(\mathbb{S}^{1})\longrightarrow [-1,1], \qquad \L([T\mathbb{S}^{1}])=1, \quad \L([\mathcal{M}])=\frac{1}{\sqrt[4]{2}}.
	\end{equation*}
	
	As a direct application of the additive formula of Proposition \ref{P7.2.4}, we can obtain $\L([T\mathbb{S}^{1}])$ from $[\mathcal{M}]$. Since
	$$
	[\mathcal{M}]\oplus[\mathcal{M}]=[T\mathbb{S}^{1}]
	$$
	and
	$$
	\chi_{2}[\mathfrak{Ind}^{-1}[\mathcal{M}]\circ\gamma_{n},\mathbb{S}^{1}]=(-1)^{n}\qquad \hbox{for all} \;\; n\in\mathbb{Z},
	$$
	from Proposition \ref{P7.2.4} it is apparent that
	\begin{align*}
	\L([T\mathbb{S}^{1}]) & =\sum_{n\in\mathbb{Z}}\chi_{2}[\mathfrak{Ind}^{-1}[\mathcal{M}]\circ\gamma_{n},\mathbb{S}^{1}]\cdot\chi_{2}[\mathfrak{Ind}^{-1}[\mathcal{M}]\circ\gamma_{n},\mathbb{S}^{1}]\cdot\mu_{\mathbf{x}}([\gamma_{n}])\\
	&=\sum_{n\in\mathbb{Z}}(-1)^{n}(-1)^{n}\mu_{\mathbf{x}}([\gamma_{n}])=\sum_{n\in\mathbb{Z}}\mu_{\mathbf{x}}([\gamma_{n}])=1.
	\end{align*}
	
	\subsection{Global torsion invariant of $\mathbb{T}^{2}$} In this subsection, we will compute the global torsion invariant $\L(\tilde{K}\mathcal{O}(\mathbb{T}^{2}))$ of the torus considering it as the quotient
	\begin{equation}
	\label{7.7.16}
	\mathbb{T}^{2}:=\mathbb{S}^{1}\times\mathbb{S}^{1} \equiv\mathbb{R}^{2}/[2\sqrt{\pi}\mathbb{Z}\times 2\sqrt{\pi}\mathbb{Z}].
	\end{equation}
	It is well known that the universal covering of $M=\mathbb{T}^{2}$ is $\tilde{M}=\mathbb{R}^{2}$ with corresponding covering map
	\begin{equation*}
	\pi:\tilde{M}\longrightarrow M, \quad (x,y)\mapsto [(x,y)],
	\end{equation*}
	where $[(x,y)]$ denotes the class of $(x,y)\in\mathbb{R}^{2}$ in the quotient \eqref{7.7.16}. An easy computation shows that
	\begin{equation*}
	\Aut_{M}\tilde{M}=\{\varphi^{n_{1},n_{2}}: n_{1},n_{2}\in\mathbb{Z}\},
	\end{equation*}
	where the morphisms $\varphi^{n_{1},n_{2}}:\mathbb{R}^{2}\to\mathbb{R}^{2}$ are defined, for every
	$n_{1},n_{2}\in\mathbb{Z}$,  by
	$$
	\varphi^{n_{1},n_{2}}(x,y)=(x+2\sqrt{\pi}n_{1},y+2\sqrt{\pi}n_{2}).
	$$
	The group of isomorphisms is given by $\pi_{1}(\mathbb{T}^{2})\simeq \Aut_{M}\tilde{M}\simeq \mathbb{Z}\oplus \mathbb{Z}$. To compute the values of the global torsion invariant of $\mathbb{T}^{2}$, $\L(\tilde{K}\mathcal{O}(\mathbb{T}^{2}))$, we will use \eqref{7.7.11}. According to it,
	\begin{equation}
	\label{7.7.17}
	\L(\tilde{K}\mathcal{O}(M))=\left\{\frac{\sum_{\varphi}\zeta(\varphi)\cdot \tilde{p}_{1}(\tilde{\mathbf{x}},\varphi(\tilde{\mathbf{x}}))}{\sum_{\varphi}\tilde{p}_{1}(\tilde{\mathbf{x}},\varphi(\tilde{\mathbf{x}}))} : \zeta\in \Hom(\Aut_{\tilde{M}}M,\mathbb{Z}_{2}) \right\}.
	\end{equation}
	Since the heat kernel of the universal covering space $\tilde{M}=\mathbb{R}^{2}$ is
	\begin{equation*}
	\tilde{p}_{t}(x,y)=\frac{1}{4\pi  t}\exp\left\{-\frac{|x-y|^{2}}{4t} \right\}, \quad (x,y)\in\mathbb{R}^{2}, \ t>0,
	\end{equation*}
	it follows that, for every $\tilde{\mathbf{x}}\in \pi^{-1}(\mathbf{x})$,
	\begin{equation*}
	\tilde{p}_{1}(\tilde{\mathbf{x}},\varphi^{n_{1},n_{2}}(\tilde{\mathbf{x}}))=\frac{1}{4\pi }\exp\left\{ -\pi (n_{1}^{2}+n_{2}^{2})\right\}.
	\end{equation*}
	Hence, using the summability methods involving theta functions \eqref{7.7.14} we obtain that
	\begin{align*}
	\sum_{\varphi\in \Aut_{\tilde{M}}M} \tilde{p}_{1}(\tilde{\mathbf{x}},\varphi(\tilde{\mathbf{x}}))&=\sum_{n_{1},n_{2}\in\mathbb{Z}}\tilde{p}_{1}(\tilde{\mathbf{x}},\varphi^{n_{1},n_{2}}(\tilde{\mathbf{x}}))\\
	&=\frac{1}{4\pi}\left(\sum_{n\in\mathbb{Z}}e^{-\pi n^{2}}\right)^{2}=\frac{1}{4\pi}\frac{\sqrt{\pi}}{\Gamma^{2}\left(\frac{3}{4}\right)}.
	\end{align*}
	We proceed to compute the group $\Hom(\Aut_{\tilde{M}}M,\mathbb{Z}_{2})$. Clearly, since $\Aut_{\tilde{M}}M$ is generated by the transformations $\varphi^{1,0}$ and  $\varphi^{0,1}$, every homomorphism $\zeta:\Aut_{\tilde{M}}M\to\mathbb{Z}_{2}$ is determinated by the values $\zeta(\varphi^{1,0}), \zeta(\varphi^{0,1})\in\mathbb{Z}_{2}$. In this way, we obtain the group isomorphisms
	\begin{equation*}
	\Hom(\Aut_{\tilde{M}}M,\mathbb{Z}_{2})\simeq \Hom(\mathbb{Z}\oplus\mathbb{Z},\mathbb{Z}_{2})\simeq \mathbb{Z}_{2}\oplus\mathbb{Z}_{2}.
	\end{equation*}
	As a direct consequence, given any homomorphism $\zeta\in \Hom(\Aut_{\tilde{M}}M,\mathbb{Z}_{2})$, we can write the action of $\zeta$ on each $\varphi^{n_{1},n_{2}}\in \Aut_{\tilde{M}} M$ as
	$$
	\zeta(\varphi^{n_{1},n_{2}})=[\zeta(\varphi^{1,0})]^{n_{1}}\cdot[\zeta(\varphi^{0,1})]^{n_{2}}.
	$$
	This allows us to compute the sum
	\begin{align*}
	\sum_{\varphi\in\Aut_{\tilde{M}}M}\zeta(\varphi)\cdot \tilde{p}_{1}(\tilde{\mathbf{x}},\varphi(\tilde{\mathbf{x}}))&=\sum_{n_{1},n_{2}\in\mathbb{Z}}[\zeta(\varphi^{1,0})]^{n_{1}}\cdot[\zeta(\varphi^{0,1})]^{n_{2}}\cdot\tilde{p}_{1}(\tilde{\mathbf{x}},\varphi^{n_{1},n_{2}}(\tilde{\mathbf{x}}))\\
	&=\frac{1}{4\pi}\left(\sum_{n\in\mathbb{Z}}[\zeta(\varphi^{1,0})]^{n}e^{-\pi n^{2}}\right)\left(\sum_{n\in\mathbb{Z}}[\zeta(\varphi^{0,1})]^{n}e^{-\pi n^{2}}\right),
	\end{align*}
	where $\zeta(\varphi^{1,0}),\zeta(\varphi^{0,1})\in\mathbb{Z}_{2}$ depend on the chosen $\zeta\in\Hom(\Aut_{\tilde{M}}M,\mathbb{Z}_{2})$. Hence, by \eqref{7.7.17}, we can deduce that
	\begin{align*}
	\L(\tilde{K}\mathcal{O}(\mathbb{T}^{2}))&
	=\left\{\frac{\Gamma^{2}\left(\frac{3}{4}\right)}{\sqrt{\pi}} \left(\sum_{n\in\mathbb{Z}}\alpha^{n}e^{-\pi n^{2}}\right)\left(\sum_{n\in\mathbb{Z}}\beta^{n}e^{-\pi n^{2}}\right) : \alpha,\beta\in\mathbb{Z}_{2} \right\}\\
	&=\Big\{1,\frac{1}{\sqrt[4]{2}},\frac{1}{\sqrt[4]{2}},\frac{1}{\sqrt{2}}\Big\},
	\end{align*}
	where \eqref{7.7.14} and \eqref{7.7.15} have been used in the last step. This information has been represented in the table of Figure \ref{F7.2}. According to Theorem \ref{th5.5}, $\mathfrak{I}$ defines an isomorphism between $\Vect_{1}(\mathbb{T}^{2})$ and $\Hom(\pi_{1}(\mathbb{T}^{2}),\mathbb{Z}_{2})\simeq \Hom(\Aut_{\tilde{M}}M,\mathbb{Z}_{2})$. Hence $\Vect_{1}(\mathbb{T}^{2})\simeq \Hom(\Aut_{\tilde{M}}M,\mathbb{Z}_{2})$ and therefore, each $\zeta\in \Hom(\Aut_{\tilde{M}}M,\mathbb{Z}_{2})$ corresponds to a single isomorphism class of line bundle. Each row of the table corresponds to an isomorphism class of line bundle. So,
	the  table describes the values of the global torsion invariant on each line bundle.
	\begin{figure}[h!]
		\begin{tabular}{ |r|r|l|l| }
			\hline
			$\zeta(\varphi^{1,0})$ & $\zeta(\varphi^{0,1})$ & $\L$ \\
			\hline
			$1$ & $1$ & $1$ \\
			$1$ & $-1$ & $1/\sqrt[4]{2}$ \\
			$-1$ & $1$ & $1/\sqrt[4]{2}$ \\
			$-1$ & $-1$ & $1/\sqrt{2}$ \\
			\hline
		\end{tabular} \quad \quad \begin{tabular}{ |r|r|r|l| }
			\hline
			$\zeta(\varphi^{1,0,0})$ & $\zeta(\varphi^{0,1,0})$ & $\zeta(\varphi^{0,0,1})$ & $\L$ \\
			\hline
			$1$ & $1$ & $1$ & $1$ \\
			$-1$ & $1$ & $1$ & $1/\sqrt[4]{2}$ \\
			$1$ & $-1$ & $1$ & $1/\sqrt[4]{2}$ \\
			$1$ & $1$ & $-1$ & $1/\sqrt[4]{2}$ \\
			$-1$ & $1$ & $-1$ & $1/\sqrt{2}$ \\
			$1$ & $-1$ & $-1$ & $1/\sqrt{2}$ \\
			$-1$ & $-1$ & $1$ & $1/\sqrt{2}$ \\
			$-1$ & $-1$ & $-1$ & $1/\sqrt[4]{8}$ \\
			\hline
		\end{tabular}
		\caption{The global torision invariant for $\mathbb{T}^{2}$ and $\mathbb{T}^{3}$, respectively.}
		\label{F7.2}
	\end{figure}
	\par
	Rephrasing the computations, we can obtain the corresponding result for the $n$-dimensional torus $\mathbb{T}^{n}:=\bigtimes_{i=1}^{n}\mathbb{S}^{1}$, where each factor is taken as $\mathbb{S}^{1}\equiv \mathbb{R}/2\sqrt{\pi}\mathbb{Z}$. In this case, the values of the global torsion invariant are given by
	\begin{equation*}
	\L(\tilde{K}\mathcal{O}(\mathbb{T}^{n}))=\Big\{\left(\frac{1}{\sqrt[4]{2}}\right)^{m}:m\in\{1,2,\cdots,n\}\Big\}.
	\end{equation*}
	It is also possible to compute the corresponding tables for the $n$-dimensional torus. The corresponding table for $n=3$ is given in the second table of Figure \ref{F7.2}. This table gives the values of the global torsion invariant on each line bundle of $\mathbb{T}^{3}$.

\appendix

\section{Wiener Measure on Loops Spaces}

\noindent For any given Riemannian manifold, $(M,g)$, let denote by $\mathcal{B}_{M}$ the Borel $\sigma$-algebra of $M$, and consider, in $(M,g)$, the measure $\mu: \mathcal{B}_{M}\to [0,+\infty]$ induced by the metric $g$. Locally, this measure can be expressed by
\begin{equation*}
d\mu=\sqrt{\det(g_{ij})_{ij}} \ dx_{1}\wedge \cdots \wedge dx_{m}
\end{equation*}
where $m$ is the dimension of $M$ and $(g_{ij})_{ij}$ is the matrix of $g$ in a local chart. According to B\"{a}r and Pf\"{a}ffle \cite{B3} and  Grigor'yan  \cite{1.7}, for any given closed Riemannian manifold, $(M,g)$, there exists a heat kernel $p_{t}(x,y)$, $t>0$, $x,y\in M$. Namely, the Schwartz kernel of the self-adjoint semigroup $e^{t\Delta}$ on $L^{2}(M,\mu)$, where $\Delta$ stands for  the Laplace--Beltrami operator on $(M,g)$.
\par
For any given (fixed) $\mathbf{x}\in M$, the Wiener measure on the loop space
$$
   \mathcal{L}_{\mathbf{x}}(M):=\{\gamma\in\mathcal{C}([0,1],M):\gamma(0)=\gamma(1)=\mathbf{x}\}
$$
is a measure $\lambda_{\mathbf{x}}:\mathcal{B}_{\mathbf{x}}\to[0,+\infty]$ on the measurable space $(\mathcal{L}_{\mathbf{x}}(M),\mathcal{B}_{\mathbf{x}})$, where $\mathcal{B}_{\mathbf{x}}$ stands for the Borel $\sigma$-algebra of $\mathcal{L}_{\mathbf{x}}(M)$ with respect to the topology of the uniform convergence, such that, for every finite subset
$$
 \mathcal{T}=\{0=t_{0}<t_{1}<\cdots<t_{n}<t_{n+1}=1\}\subset[0,1]
$$
and any $(B_{t})_{t\in\mathcal{T}\backslash\{0,1\}}\subset \mathcal{B}_{M}$,
\begin{equation}\label{E4}
\lambda_{\mathbf{x}}\left(\pi^{-1}_{\mathcal{T}}(B_{t})_{t\in\mathcal{T}\backslash\{0,1\}}\right)
=\int_{B_{t_{1}}}\overset{(n)}{\cdots}\int_{B_{t_{n}}} \prod_{i=1}^{n+1}p_{t_{i}-t_{i-1}}(x_{i},x_{i-1})\prod_{i=1}^{n}d\mu(x_{i}), \quad x_{0}=x_{n+1}=\mathbf{x}.
\end{equation}
In this context, we are using using the notation $\pi_{\mathcal{T}}$ to denote the projector
\begin{equation*}
\pi_{\mathcal{T}}:M^{[0,1]}\longrightarrow M^{\mathcal{T}\backslash\{0,1\}}, \qquad \pi_{\mathcal{T}}(\gamma_{t})_{t\in[0,1]}:=(\gamma_{t})_{t\in \mathcal{T}\backslash\{0,1\}}.
\end{equation*}
Since
\begin{equation}\label{J}
\lambda_{\mathbf{x}}(\mathcal{L}_{\mathbf{x}}(M))=p_{1}(\mathbf{x},\mathbf{x})>0,
\end{equation}
the measure $\lambda_{\mathbf{x}}$ is not a probability measure, unless $p_1(\mathbf{x},\mathbf{x})=1$. Nevertheless, the normalized measure
$$
  \mu_{\mathbf{x}}=p_{1}(\mathbf{x},\mathbf{x})^{-1}\lambda_{\mathbf{x}}
$$
provides us with a probability measure. Rephrasing \eqref{E4}, it is apparent that, for every finite subset $$
  \mathcal{T}=\{0=t_{0}<t_{1}<\cdots<t_{n}<t_{n+1}=1\}\subset[0,1]
$$
and each $(B_{t})_{t\in\mathcal{T}\backslash\{0,1\}}\subset \mathcal{B}_{M}$,
\begin{equation*}
\mu_{\mathbf{x}}(\pi^{-1}_{\mathcal{T}}(B_{t})_{t\in\mathcal{T}\backslash\{0,1\}})=\int_{B_{t_{1}}}\overset{(n)}{\cdots}\int_{B_{t_{n}}} p_{1}(\mathbf{x},\mathbf{y})^{-1} \prod_{i=1}^{n+1}p_{t_{i}-t_{i-1}}(x_{i},x_{i-1})\prod_{i=1}^{n}d\mu(x_{i}), \quad x_{0}=x_{n+1}=\mathbf{x}.
\end{equation*}
The measure $\mu_{\mathbf{x}}:\mathcal{B}_{\mathbf{x}}\to[0,+\infty]$ is ususally refereed to as  the \textit{normalized} Wiener measure.
\par
This construction can be easily generalized to cover pinned spaces
$$
   \mathcal{C}^{\mathbf{y}}_{\mathbf{x}}(M):=\{\gamma\in\mathcal{C}([0,1],M):\gamma(0)=\mathbf{x}, \gamma(1)=\mathbf{y}\},
$$
where it is possible to construct a generalized Wiener measure,  $\lambda_{\mathbf{x}}^{\mathbf{y}}:\mathcal{B}_{\mathbf{x}}^{\mathbf{y}}\to[0,+\infty]$, as well as
a normalized Wiener measure, $\mu_{\mathbf{x}}^{\mathbf{y}}:\mathcal{B}_{\mathbf{x}}^{\mathbf{y}}\to[0,+\infty]$, where $\mathcal{B}_{\mathbf{x}}^{\mathbf{y}}$ stands for the Borel $\sigma$-algebra of $\mathcal{C}^{\mathbf{y}}_{\mathbf{x}}(M)$ under the topology of the uniform convergence (see B\"{a}r
and Pf\"{a}ffle \cite{B3} for any further required detail).

\end{document}